\begin{document}
\baselineskip=14pt

\numberwithin{equation}{section}

\newtheorem{thm}{Theorem}[section]
\newtheorem{lem}[thm]{Lemma}
\newtheorem{cor}[thm]{Corollary}
\newtheorem{Prop}[thm]{Proposition}
\newtheorem{Def}[thm]{Definition}
\newtheorem{Rem}[thm]{Remark}
\newtheorem{Ex}[thm]{Example}

\newcommand{\A}{\mathbb{A}}
\newcommand{\B}{\mathbb{B}}
\newcommand{\C}{\mathbb{C}}
\newcommand{\D}{\mathbb{D}}
\newcommand{\E}{\mathbb{E}}
\newcommand{\F}{\mathbb{F}}
\newcommand{\G}{\mathbb{G}}
\newcommand{\I}{\mathbb{I}}
\newcommand{\J}{\mathbb{J}}
\newcommand{\K}{\mathbb{K}}
\newcommand{\M}{\mathbb{M}}
\newcommand{\N}{\mathbb{N}}
\newcommand{\Q}{\mathbb{Q}}
\newcommand{\R}{\mathbb{R}}
\newcommand{\T}{\mathbb{T}}
\newcommand{\U}{\mathbb{U}}
\newcommand{\V}{\mathbb{V}}
\newcommand{\W}{\mathbb{W}}
\newcommand{\X}{\mathbb{X}}
\newcommand{\Y}{\mathbb{Y}}
\newcommand{\Z}{\mathbb{Z}}
\newcommand\ca{\mathcal{A}}
\newcommand\cb{\mathcal{B}}
\newcommand\cc{\mathcal{C}}
\newcommand\cd{\mathcal{D}}
\newcommand\ce{\mathcal{E}}
\newcommand\cf{\mathcal{F}}
\newcommand\cg{\mathcal{G}}
\newcommand\ch{\mathcal{H}}
\newcommand\ci{\mathcal{I}}
\newcommand\cj{\mathcal{J}}
\newcommand\ck{\mathcal{K}}
\newcommand\cl{\mathcal{L}}
\newcommand\cm{\mathcal{M}}
\newcommand\cn{\mathcal{N}}
\newcommand\co{\mathcal{O}}
\newcommand\cp{\mathcal{P}}
\newcommand\cq{\mathcal{Q}}
\newcommand\rr{\mathcal{R}}
\newcommand\cs{\mathcal{S}}
\newcommand\ct{\mathcal{T}}
\newcommand\cu{\mathcal{U}}
\newcommand\cv{\mathcal{V}}
\newcommand\cw{\mathcal{W}}
\newcommand\cx{\mathcal{X}}
\newcommand\ocd{\overline{\cd}}

\def\c{\centerline}
\def\ov{\overline}
\def\emp {\emptyset}
\def\pa {\partial}
\def\bl{\setminus}
\def\op{\oplus}
\def\sbt{\subset}
\def\un{\underline}
\def\al {\alpha}
\def\bt {\beta}
\def\de {\delta}
\def\Ga {\Gamma}
\def\ga {\gamma}
\def\lm {\lambda}
\def\Lam {\Lambda}
\def\om {\omega}
\def\Om {\Omega}
\def\sa {\sigma}
\def\vr {\varepsilon}
\def\va {\varphi}

\title{\bf On the Brezis-Nirenberg type critical problem for nonlinear Choquard equation\thanks{Partially supported by NSFC (11101374, 11271331, 11571317) and ZJNSF(LY15A010010)}}

\author{ Fashun Gao, Minbo Yang\thanks{M. Yang is the corresponding author: mbyang@zjnu.edu.cn}
\\
\\
{\small Department of Mathematics, Zhejiang Normal University} \\ {\small  Jinhua, Zhejiang, 321004, P. R. China}}

\date{}
\maketitle

\begin{abstract}
We establish some existence results for the  Brezis-Nirenberg type problem of the nonlinear Choquard equation
$$-\Delta u
=\left(\int_{\Omega}\frac{|u|^{2_{\mu}^{\ast}}}{|x-y|^{\mu}}dy\right)|u|^{2_{\mu}^{\ast}-2}u+\lambda u\hspace{4.14mm}\mbox{in}\hspace{1.14mm} \Omega,
$$
where $\Omega$ is a bounded domain of $\mathbb{R}^N$ with Lipschitz boundary, $\lambda$ is a real parameter, $N\geq3$, $2_{\mu}^{\ast}=(2N-\mu)/(N-2)$ is the critical
exponent in the sense of the Hardy-Littlewood-Sobolev inequality.
 \vspace{0.3cm}

\noindent{\bf Mathematics Subject Classifications (2000):} 35J25,
35J60, 35A15

\vspace{0.3cm}

 \noindent {\bf Keywords:} Brezis-Nirenberg  problem;  Choquard equation; Critical exponent.
\end{abstract}

\section{Introduction and main results}
In the last decades many people studied the elliptic equation
\begin{equation}\label{local.S1}
\left\{\begin{array}{l}
\displaystyle-\Delta u=|u|^{2^{\ast}-2}u+\lambda u\ \ \mbox{in}\ \ \Omega,\\
\\
\displaystyle u=0 \hspace{25.14mm}\ \ \mbox{on}\ \ \partial\Omega,
\end{array}
\right.
\end{equation}
where $\Omega$ is a bounded domain of $\R^N$, $2^{\ast}=\frac{2N}{N-2}$ is the critical exponent for the embedding of $H_{0}^{1}(\Omega)$ to $L^p(\Omega)$, $ \lambda\in (0,\lambda_{1})$ where $\lambda_{1}$ is the first eigenvalue of $-\Delta$ set on bounded domain. In a celebrated paper \cite{BN} Brezis and Nirenberg proved that: if $N\geq4$ and $\lambda\in(0,\lambda_{1})$, then problem \eqref{local.S1} has a nontrivial solution; if $N=3$ then there exists a constant $\lambda_{\ast}\in(0, \lambda_{1})$ such that for any $\lambda\in(\lambda_{\ast},\lambda_{1})$ problem \eqref{local.S1} has a positive solution and if $\Omega$ is a ball, problem \eqref{local.S1} has a positive solution if and only if $\lambda\in(\frac{\lambda_{1}}{4},\lambda_{1})$. Capozzi, Fortunato and Palmieri \cite{CFP} proved if $N\geq4$ then the problem \eqref{local.S1} has a nontrivial solution for all $\lambda>0$. In \cite{CSS}, Cerami, Solimini and Struwe proved if $N\geq6$ and $\lambda\in(0,\lambda_{1})$, the existence of
sign-changing solutions; if $\Omega$ is a ball, $N\geq7$ and $\lambda\in(0,\lambda_{1})$, infinitely many radial solutions to problem \eqref{local.S1}. There is a great deal of work on elliptic equations with critical nonlinearity, see for example \cite{CP, CFS, FG, GY, J,  Rph1, SZ, Wi} and the references therein. 

In the present paper we are going to consider the existence and nonexistence of solutions for the following nonlocal equation:
\begin{equation}\label{CCE}
\left\{\begin{array}{l}
\displaystyle-\Delta u
=\left(\int_{\Omega}\frac{|u|^{2_{\mu}^{\ast}}}{|x-y|^{\mu}}dy\right)|u|^{2_{\mu}^{\ast}-2}u+\lambda u\hspace{4.14mm}\mbox{in}\hspace{1.14mm} \Omega,\\
\displaystyle u\in H_{0}^{1}(\Omega),
\end{array}
\right.
\end{equation}
where $\Omega$ is a bounded domain of $\mathbb{R}^N$ with Lipschitz boundary, $\lambda$ is a real parameter, $N\geq3$, $0<\mu<N$ and $2_{\mu}^{\ast}=(2N-\mu)/(N-2)$.  This nonlocal elliptic equation is closely related to the nonlinear Choquard equation
\begin{equation}\label{Nonlocal.S1}
 -\Delta u +V(x)u =\Big(\frac{1}{|x|^{\mu}}\ast|u|^{p}\Big)|u|^{p-2}u  \quad \mbox{in} \quad \R^3.
\end{equation}
Different from the fractional Laplacian where the pseudo-differential operator causes the nonlocal phenomena, for the Choquard equation the nonlocal term appears in the nonlinearity and influences the equation greatly. For $p=2$ and $\mu=1$, it  goes
back to the description of the quantum theory of a polaron at rest by S. Pekar in 1954 \cite{P1}
and the modeling of an electron trapped
in its own hole in 1976 in the work of P. Choquard, as a certain approximation to Hartree-Fock theory of one-component
plasma \cite{L1}. In some particular cases, this equation is also known as the Schr\"{o}dinger-Newton equation, which was introduced by Penrose in his discussion on the selfgravitational collapse of a quantum mechanical wave function  \cite{Pe}.

The existence and qualitative properties of solutions of \eqref{Nonlocal.S1} have been widely studied in the last decades. In \cite{L1}, Lieb proved the existence and uniqueness, up to translations,
of the ground state. Later, in \cite{Ls}, Lions showed the existence of
a sequence of radially symmetric solutions. In \cite{CCS1, ML,  MS1} the authors showed the regularity, positivity
and radial symmetry of the ground states and
derived decay property at infinity as well. Moreover, Moroz and Van
Schaftingen in \cite{MS2} considered  the existence of ground states under the assumptions of Berestycki-Lions type. For periodic potential $V$ that changes sign and $0$ lies in the gap of the spectrum of the Schr\"{o}dinger operator $-\Delta +V$, the problem is strongly indefinite, and the existence of solution for $p=2$ was considered in \cite{BJS} by reduction arguments.  In \cite{ANY} Alves, N\'obrega and the second author studied the existence of multi-bump shaped solution for the nonlinear Choquard equation with deepening potential well. For a general case, Ackermann \cite{AC} proposed a new approach to prove the existence of infinitely many geometrically distinct weak solutions. For other related results, we refer the readers to \cite{CS, GS} for the existence of sign-changing solutions, \cite{AY1, AY2, MS3, S, WW, YD} for the existence and concentration behavior of the semiclassical solutions.

The starting point of the variational approach to the problem \eqref{CCE} is the following  well-known Hardy-Littlewood-Sobolev inequality.
\begin{Prop}\label{HLS}
 (Hardy-Littlewood-Sobolev inequality). (See \cite{LL}.) Let $t,r>1$ and $0<\mu<N$ with $1/t+\mu/N+1/r=2$, $f\in L^{t}(\mathbb{R}^N)$ and $h\in L^{r}(\mathbb{R}^N)$. There exists a sharp constant $C(t,N,\mu,r)$, independent of $f,h$, such that
\begin{equation}\label{HLS1}
\int_{\mathbb{R}^{N}}\int_{\mathbb{R}^{N}}\frac{f(x)h(y)}{|x-y|^{\mu}}dxdy\leq C(t,N,\mu,r) |f|_{t}|h|_{r}.
\end{equation}
If $t=r=2N/(2N-\mu)$, then
$$
 C(t,N,\mu,r)=C(N,\mu)=\pi^{\frac{\mu}{2}}\frac{\Gamma(\frac{N}{2}-\frac{\mu}{2})}{\Gamma(N-\frac{\mu}{2})}\left\{\frac{\Gamma(\frac{N}{2})}{\Gamma(N)}\right\}^{-1+\frac{\mu}{N}}.
$$
In this case there is equality in \eqref{HLS1} if and only if $f\equiv(const.)h$ and
$$
h(x)=A(\gamma^{2}+|x-a|^{2})^{-(2N-\mu)/2}
$$
for some $A\in \mathbb{C}$, $0\neq\gamma\in\mathbb{R}$ and $a\in \mathbb{R}^{N}$.
\end{Prop}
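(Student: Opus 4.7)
The plan is to split the statement into three parts: (i) the bare inequality \eqref{HLS1} for all admissible exponents, (ii) the identification of the sharp constant in the conformal case $t=r=2N/(2N-\mu)$, and (iii) the classification of extremals.

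For (i) I would start by observing that the kernel $K(x)=|x|^{-\mu}$ lies in the weak Lebesgue space $L^{N/\mu,\infty}(\R^N)$ with an explicit quasi-norm. The exponent relation $1/t+\mu/N+1/r=2$ can be rewritten as $1/t+1/r'=\mu/N$, i.e.\ $1/r'-1/t'=1-\mu/N$, which is precisely the scaling needed to apply the weak-type Young convolution inequality: convolving an $L^t$ function with a weak-$L^{N/\mu}$ kernel lands in $L^{s}$ with $1/s=1/t-(1-\mu/N)$, and pairing against $L^r$ yields \eqref{HLS1}. Alternatively one can prove it directly by Marcinkiewicz interpolation: fix $h$ and decompose $K=K\chi_{\{|x|\le R\}}+K\chi_{\{|x|>R\}}$, estimate the two pieces in $L^1$ and $L^\infty$ respectively, optimize in $R$ to get a weak-type bound for the convolution operator $f\mapsto K*f$ at the endpoints, and interpolate. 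This gives the inequality with \emph{some} constant, which is all that part (i) requires.

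For (ii) and (iii), which concern the conformal case, I would follow Lieb's strategy. First, by the Riesz rearrangement inequality, the double integral in \eqref{HLS1} does not decrease when $f$ and $h$ are replaced by their symmetric-decreasing rearrangements $f^*$ and $h^*$, while $|f|_t$ and $|h|_r$ are preserved; so any extremizing sequence can be taken radial and decreasing. Next I would exploit the conformal invariance of the problem in the symmetric case: lifting via the inverse stereographic projection $\R^N\to S^N$, the functional
\[
\ci(f,h)=\int\int\frac{f(x)h(y)}{|x-y|^{\mu}}\,dx\,dy
\]
becomes, after absorbing the Jacobians into the conformal weights, an expression on $S^N$ that is invariant under the full conformal group of $S^N$ (rotations together with conformal transformations coming from Möbius maps on $\R^N$). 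The key competing-symmetries argument of Lieb combines the Euclidean rearrangement with an inversion-based rearrangement; iterating the two shows that any maximizing sequence converges (after appropriate conformal normalizations) to a fixed point common to both symmetries, which can only be the constant function on $S^N$. Pulling the constant back to $\R^N$ gives an extremal of the form $h(x)=A(\gamma^2+|x-a|^2)^{-(2N-\mu)/2}$ and, by inserting this back into $\ci$ (a computable Beta-function integral), produces the explicit value of $C(N,\mu)$ in terms of Gamma functions.

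For the equality case in (iii), the above compactness argument already shows that any extremizer must, modulo conformal transformations, equal a constant on $S^N$, forcing $f$ and $h$ to be proportional and of the claimed Talenti-type shape. The main obstacle will be step (ii): establishing existence of an extremizer and running the competing-symmetries iteration rigorously requires careful control of the non-compact group action and a compactness-up-to-conformal-transformations lemma (a precursor of concentration-compactness). Once that is in hand, the sharp constant and the classification follow together. The rest — checking the scaling exponent constraint, verifying the interpolation bookkeeping, and evaluating the explicit integral for the constant — is routine computation that I would leave to the end.
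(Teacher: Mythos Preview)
The paper does not prove this proposition at all: it is stated with the parenthetical reference ``(See \cite{LL})'' and then used as a black box throughout the rest of the article. There is no argument in the paper to compare your proposal against.

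Your outline is, in fact, a faithful sketch of Lieb's original proof as presented in \cite{LL}: weak-Young/Marcinkiewicz for the bare inequality, Riesz rearrangement to reduce to radial decreasing competitors, stereographic lift to $S^N$ to exploit conformal invariance, and the competing-symmetries iteration to force convergence to the constant on the sphere, from which both the sharp constant and the classification of extremals follow. So the content is correct, but it goes well beyond what the paper does --- the authors simply import the result.
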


Notice that, by the Hardy-Littlewood-Sobolev inequality, the integral
$$
\int_{\mathbb{R}^{N}}\int_{\mathbb{R}^{N}}\frac{|u(x)|^{q}|u(y)|^{q}}{|x-y|^{\mu}}dxdy
$$
is well defined if $|u|^{q}\in L^{t}(\mathbb{R}^N)$ for some $t>1$ satisfying
$$
\frac{2}{t}+\frac{\mu}{N}=2.
$$
Thus, for $u\in H^{1}(\mathbb{R}^N)$,  by Sobolev embedding Theorems, we know
$$
2\leq tq\leq \frac{2N}{N-2},
$$
that is
$$
\frac{2N-\mu}{N}\leq q\leq\frac{2N-\mu}{N-2}.
$$
Thus, $\frac{2N-\mu}{N}$ is called the lower critical exponent and $2_{\mu}^{\ast}=\frac{2N-\mu}{N-2}$ is the upper critical exponent in the sense of the Hardy-Littlewood-Sobolev inequality.

We need to point out that all the papers we mentioned above  considered the nonlinear Choquard equation with superlinear subcritical nonlinearities. In a recent paper \cite{MS4} by Moroz and Van
Schaftingen, the authors considered the nonlinear Choquard equation \eqref{Nonlocal.S1}  in $\R^N$ with lower critical exponent $\frac{2N-\mu}{N}$. There the authors investigated the existence and nonexistence of solutions to the equation
with nonconstant potential by minimizing arguments. However, as far as we know there seems no result for the  nonlinear Choquard equation with upper critical exponent with respect to  the Hardy-Littlewood-Sobolev inequality. In \cite{ACTY}, the authors studied the existence and concentrations of the solutions of a nonlocal Schr\"{o}dinger with the critical exponential growth in $\R^2$, that problem is closely related to the Choquard equation. Recently many people also studied the Brezis-Nirenberg problem for elliptic equation driven by the fractional Laplacian, this type of problem are nonlocal in nature and we may refer the readers to \cite{BCSS, SV2, Ta} and the references therein for a recent progress. And so, it is quite natural to ask if the well-known results established by Brezis and Nirenberg in \cite{BN} for local elliptic equation still hold for the nonlocal Choquard equation. The main purpose of the present paper is to study the nonlinear Choquard equation with upper critical exponent $2_{\mu}^{\ast}=\frac{2N-\mu}{N-2}$ and give a confirm answer to the question of the existence and nonexistence of solutions.

From the Hardy-Littlewood-Sobolev inequality, for all $u\in D^{1,2}(\mathbb{R}^N)$ we know
$$
\Big(\int_{\mathbb{R}^N}\int_{\mathbb{R}^N}\frac{|u(x)|^{2_{\mu}^{\ast}}|u(y)|^{2_{\mu}^{\ast}}}{|x-y|^{\mu}}dxdy\Big)^{\frac{N-2}{2N-\mu}}\leq C(N,\mu)^{\frac{N-2}{2N-\mu}}|u|_{2^{\ast}}^{2},
$$
where $C(N,\mu)$ is defined as in the Proposition \ref{HLS}.
We use $S_{H,L}$ to denote best constant defined by
\begin{equation}\label{S1}
S_{H,L}:=\displaystyle\inf\limits_{u\in D^{1,2}(\mathbb{R}^N)\backslash\{{0}\}}\ \ \frac{\displaystyle\int_{\mathbb{R}^N}|\nabla u|^{2}dx}{(\displaystyle\int_{\mathbb{R}^N}\int_{\mathbb{R}^N}\frac{|u(x)|^{2_{\mu}^{\ast}}|u(y)|^{2_{\mu}^{\ast}}}{|x-y|^{\mu}}dxdy)^{\frac{N-2}{2N-\mu}}}.
\end{equation}

From commentaries above, we can easily draw the following conclusion.
\begin{lem}\label{ExFu}
The constant $S_{H,L}$ defined in \eqref{S1} is achieved if and only if $$u=C\left(\frac{b}{b^{2}+|x-a|^{2}}\right)^{\frac{N-2}{2}} ,$$ where $C>0$ is a fixed constant, $a\in \mathbb{R}^{N}$ and $b\in(0,\infty)$ are parameters. What's more,
$$
S_{H,L}=\frac{S}{C(N,\mu)^{\frac{N-2}{2N-\mu}}},
$$
where $S$ is the best Sobolev constant.
\end{lem}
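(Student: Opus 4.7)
The plan is to chain the Hardy-Littlewood-Sobolev inequality (Proposition \ref{HLS}) with the classical Sobolev inequality, and then trace the two equality cases.

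First, I would apply Proposition \ref{HLS} with $t=r=\frac{2N}{2N-\mu}$ to $f=h=|u|^{2_\mu^\ast}$. Since $2_\mu^\ast \cdot \frac{2N}{2N-\mu}=2^\ast$, this yields
$$
\int_{\R^N}\int_{\R^N}\frac{|u(x)|^{2_\mu^\ast}|u(y)|^{2_\mu^\ast}}{|x-y|^\mu}\,dxdy \;\leq\; C(N,\mu)\,|u|_{2^\ast}^{\,2\cdot 2_\mu^\ast}.
$$
Raising to the power $\frac{N-2}{2N-\mu}$ and noting that $2_\mu^\ast \cdot \frac{N-2}{2N-\mu}=1$, the exponent on $|u|_{2^\ast}$ collapses to $2$. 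Combining this with the Sobolev inequality $|\nabla u|_2^2 \ge S|u|_{2^\ast}^2$ gives, for every $u\in D^{1,2}(\R^N)\setminus\{0\}$,
$$
\frac{|\nabla u|_2^2}{\bigl(\iint |x-y|^{-\mu}|u(x)|^{2_\mu^\ast}|u(y)|^{2_\mu^\ast}\bigr)^{(N-2)/(2N-\mu)}} \;\ge\; \frac{S}{C(N,\mu)^{(N-2)/(2N-\mu)}},
$$
and hence $S_{H,L}\ge S\,C(N,\mu)^{-(N-2)/(2N-\mu)}$.

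Next, I would check that equality is attained by the Aubin--Talenti bubble $U_{a,b}(x)=C\bigl(\tfrac{b}{b^2+|x-a|^2}\bigr)^{(N-2)/2}$. Since $|U_{a,b}|^{2_\mu^\ast}$ is, up to a multiplicative constant, of the form $(b^2+|x-a|^2)^{-(2N-\mu)/2}$, the equality case in Proposition \ref{HLS} applies and the HLS step above is an equality; simultaneously, $U_{a,b}$ is the classical Sobolev extremal, so $|\nabla U_{a,b}|_2^2=S|U_{a,b}|_{2^\ast}^2$. Both inequalities become equalities, so $U_{a,b}$ attains $S_{H,L}$ and the stated value of the infimum follows.

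For the ``only if'' direction, suppose $u$ achieves $S_{H,L}$. Since the quotient in \eqref{S1} depends only on $|u|$, one may assume $u\ge 0$. Then in the chain of inequalities above both the Sobolev step and the HLS step must be equalities. Equality in Sobolev forces $u$ (up to sign, hence up to the normalization $u\ge 0$) to be an Aubin--Talenti function of the claimed form; equality in HLS is then automatically consistent with that shape, confirming the characterization.

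The main obstacle I anticipate is being careful with the equality analysis: the HLS equality case characterizes $|u|^{2_\mu^\ast}$, not $u$ itself, so one has to argue separately that WLOG $u\ge 0$ and invoke the sharp classification of extremals of the Sobolev inequality (Talenti/Aubin) to conclude. Everything else is bookkeeping with the exponents $2_\mu^\ast$, $2^\ast$ and $\tfrac{N-2}{2N-\mu}$.
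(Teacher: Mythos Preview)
Your approach is essentially the same as the paper's: chain the sharp Hardy--Littlewood--Sobolev inequality with the sharp Sobolev inequality to get the lower bound, then observe that the Aubin--Talenti bubble saturates both simultaneously, and for the converse trace the equality cases. One small slip: the Rayleigh quotient in \eqref{S1} does \emph{not} depend only on $|u|$, since the numerator $\int|\nabla u|^2$ can strictly exceed $\int|\nabla|u||^2$ for sign-changing $u$; the correct justification for reducing to $u\ge 0$ is that $\int|\nabla|u||^2\le\int|\nabla u|^2$, so passing to $|u|$ can only lower the quotient and hence $|u|$ is again a minimizer.
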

\begin{proof}
By the Hardy-Littlewood-Sobolev inequality, we can see
$$\aligned
S_{H,L}\geq\frac{1}{C(N,\mu)^{\frac{N-2}{2N-\mu}}}\inf\limits_{u\in D^{1,2}(\mathbb{R}^N)\backslash\{{0}\}}\ \ \frac{\displaystyle\int_{\mathbb{R}^N}|\nabla u|^{2}dx}{|u|_{2^{\ast}}^{2}}
=\frac{S}{C(N,\mu)^{\frac{N-2}{2N-\mu}}},
\endaligned
$$
where $S$ is the best Sobolev constant. Notice that  the equality in the Hardy-Littlewood-Sobolev inequality holds  if and only if $u=C\left(\frac{b}{b^{2}+|x-a|^{2}}\right)^{\frac{N-2}{2}} $, where $C>0$ is a fixed constant, $a\in \mathbb{R}^{N}$ and $b\in(0,\infty)$ are parameters. Meanwhile, it is well-known that the function $u=C\left(\frac{b}{b^{2}+|x-a|^{2}}\right)^{\frac{N-2}{2}} $ is also a minimizer for $S$, thus we get that $S_{H,L}$ is achieved if and only if $u=C\left(\frac{b}{b^{2}+|x-a|^{2}}\right)^{\frac{N-2}{2}} $ and$$S_{H,L}=\frac{S}{C(N,\mu)^{\frac{N-2}{2N-\mu}}}.$$
\\
In particular, let $U(x):=\frac{[N(N-2)]^{\frac{N-2}{4}}}{(1+|x|^{2})^{\frac{N-2}{2}}}$ be a minimizer for $S$, then
\begin{equation}\label{REL}
\aligned
\tilde{U}(x)&=S^{\frac{(N-\mu)(2-N)}{4(N-\mu+2)}}C(N,\mu)^{\frac{2-N}{2(N-\mu+2)}}U(x)\\
&=S^{\frac{(N-\mu)(2-N)}{4(N-\mu+2)}}C(N,\mu)^{\frac{2-N}{2(N-\mu+2)}}\frac{[N(N-2)]^{\frac{N-2}{4}}}{(1+|x|^{2})^{\frac{N-2}{2}}}
\endaligned
\end{equation}
is the unique  minimizer for $S_{H,L}$ and satisfies
$$
-\Delta u=\left(\int_{\R^N}\frac{|u|^{2_{\mu}^{\ast}}}{|x-y|^{\mu}}dy\right)|u|^{2_{\mu}^{\ast}-2}u\ \ \   \hbox{in}\ \ \ \R^N.
$$
Moreover,
$$
\int_{\mathbb{R}^N}|\nabla \tilde{U}|^{2}dx=\int_{\mathbb{R}^N}\int_{\mathbb{R}^N}\frac{|\tilde{U}(x)|^{2_{\mu}^{\ast}}|\tilde{U}(y)|^{2_{\mu}^{\ast}}}{|x-y|^{\mu}}dxdy=S_{H,L}^{\frac{2N-\mu}{N-\mu+2}}.
$$
\end{proof}
We have some more words about the best constant $S_{H,L}$.
\begin{lem}
Let $N\geq3$. For every open subset $\Omega$ of $\mathbb{R}^N$,
\begin{equation}
S_{H,L}(\Omega):=\displaystyle\inf\limits_{u\in D_{0}^{1,2}(\Omega)\backslash\{{0}\}}\ \ \frac{\displaystyle\int_{\Omega}|\nabla u|^{2}dx}{\left(\displaystyle\int_{\Omega}\int_{\Omega}\frac{|u(x)|^{2_{\mu}^{\ast}}|u(y)|^{2_{\mu}^{\ast}}}{|x-y|^{\mu}}dxdy\right)^{\frac{N-2}{2N-\mu}}}=S_{H,L},
\end{equation}
$S_{H,L}(\Omega)$ is never achieved except when $\Omega=\R^N$.
\end{lem}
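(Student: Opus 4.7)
The strategy is to sandwich $S_{H,L}(\Omega)$ by $S_{H,L}$ from both sides by testing against a truncated, rescaled copy of the extremizer $\tilde U$ from Lemma~\ref{ExFu}, and then to invoke the uniqueness statement of the same lemma to preclude attainment once $\Omega \neq \R^N$.

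The lower bound $S_{H,L}(\Omega) \geq S_{H,L}$ is immediate from zero-extension: any $u \in D_0^{1,2}(\Omega) \setminus \{0\}$ prolongs to $\tilde u \in D^{1,2}(\R^N) \setminus \{0\}$ with identical gradient $L^2$-norm and identical Choquard double integral (the Riesz integrand vanishes off $\Omega \times \Omega$), so the Rayleigh quotient is preserved. For the reverse bound, fix $x_0 \in \Omega$ and $r>0$ with $B_r(x_0) \subset \Omega$, choose $\eta \in C_c^{\infty}(B_r(x_0))$ with $\eta \equiv 1$ on $B_{r/2}(x_0)$, and set
$$\tilde U_\epsilon(x) := \epsilon^{-(N-2)/2}\,\tilde U\!\left(\frac{x-x_0}{\epsilon}\right), \qquad u_\epsilon := \eta\,\tilde U_\epsilon \in D_0^{1,2}(\Omega).$$
A short power count confirms that $\tilde U_\epsilon$ itself has Rayleigh quotient equal to $S_{H,L}$, so everything hinges on the truncation cost vanishing. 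Expanding $|\nabla(\eta \tilde U_\epsilon)|^2$ and rescaling $y = (x-x_0)/\epsilon$, I use the decay $\tilde U(y) \sim |y|^{2-N}$ on the annulus $|y| \sim r/\epsilon$ that carries $\nabla \eta$ to bound the $\nabla \eta$-contributions by $O(\epsilon^{N-2})$ and, via Cauchy--Schwarz, the cross term by $O(\epsilon^{(N-2)/2})$; dominated convergence then yields $\int_\Omega |\nabla u_\epsilon|^2\,dx \to \int_{\R^N}|\nabla \tilde U|^2\,dx$. A parallel change of variables in the Choquard term, with integrand dominated by the finite (by HLS) quantity $|\tilde U(z)|^{2_\mu^{\ast}}|\tilde U(w)|^{2_\mu^{\ast}}/|z-w|^{\mu}$, gives
$$\int_\Omega \int_\Omega \frac{|u_\epsilon(x)|^{2_\mu^{\ast}}|u_\epsilon(y)|^{2_\mu^{\ast}}}{|x-y|^{\mu}}\,dxdy \longrightarrow \int_{\R^N}\int_{\R^N}\frac{|\tilde U(x)|^{2_\mu^{\ast}}|\tilde U(y)|^{2_\mu^{\ast}}}{|x-y|^{\mu}}\,dxdy.$$
Forming the Rayleigh quotient of $u_\epsilon$ and sending $\epsilon \to 0^+$ delivers $S_{H,L}(\Omega) \leq S_{H,L}$, hence equality.

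For non-attainment, assume $\Omega \neq \R^N$ and that $u \in D_0^{1,2}(\Omega) \setminus \{0\}$ attains $S_{H,L}(\Omega) = S_{H,L}$. Its zero-extension $\tilde u$ then attains $S_{H,L}$ on $\R^N$, so Lemma~\ref{ExFu} forces $\tilde u$ to be a nonzero scalar multiple of $\bigl(b/(b^2 + |x-a|^2)\bigr)^{(N-2)/2}$ for some $a \in \R^N$ and $b > 0$. This function is strictly positive on all of $\R^N$, contradicting $\tilde u \equiv 0$ on the nonempty open set $\R^N \setminus \overline{\Omega}$. The main delicate step in the whole plan is the double-integral asymptotic, because the Riesz kernel couples values of $u_\epsilon$ at arbitrarily separated points, so the truncation error at one argument cannot be read off locally; it is handled by splitting the integration into the inner ball $B_{r/2}(x_0)$, where $u_\epsilon = \tilde U_\epsilon$ exactly, and the complementary annulus, where $|\tilde U_\epsilon|^{2_\mu^{\ast}}$ is pointwise of order $\epsilon^{(2N-\mu)/2} r^{-(2N-\mu)}$, and then invoking HLS to absorb the resulting cross contributions.
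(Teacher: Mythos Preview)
Your argument is correct, but it differs from the paper's in how you obtain the upper bound $S_{H,L}(\Omega)\le S_{H,L}$. You fix the explicit extremizer $\tilde U$, truncate and rescale it into $\Omega$, and then carry out an asymptotic analysis showing that the truncation errors in both numerator and denominator vanish as $\epsilon\to 0^{+}$; the Choquard term requires the dominated-convergence/HLS splitting you describe. The paper instead takes an arbitrary minimizing sequence $\{u_n\}\subset C_0^\infty(\R^N)$ for $S_{H,L}$ and observes that for each $n$ one can choose $y_n\in\R^N$, $\tau_n>0$ so that $\tau_n^{(N-2)/2}u_n(\tau_n\,\cdot+y_n)$ is supported in $\Omega$; since both the Dirichlet energy and the Choquard double integral are \emph{exactly} invariant under translation and dilation, the Rayleigh quotient is preserved and no asymptotics are needed at all. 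The paper's route is therefore shorter and sidesteps the ``delicate step'' you flag, while your route has the compensating virtue that the very truncation estimates you set up are the ones the paper must develop anyway in Section~3 for the Mountain-Pass level comparison. For non-attainment both proofs coincide: zero-extend a hypothetical minimizer and invoke Lemma~\ref{ExFu}. One small wording issue: you rely on $\R^N\setminus\overline{\Omega}$ being nonempty, which can fail even when $\Omega\neq\R^N$; it is enough to note that the Talenti profile is strictly positive everywhere while the zero-extension vanishes on the nonempty set $\R^N\setminus\Omega$.
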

\begin{proof} It is clear that $S_{H,L}\leq S_{H,L}(\Omega)$ by $D_{0}^{1,2}(\Omega)\subset D^{1,2}(\mathbb{R}^N)$. Let $\{u_{n}\}\subset C_{0}^{\infty}(\mathbb{R}^N)$ be a minimizing sequence for $S_{H,L}$. We make translations and dilations for $\{u_{n}\}$ by choosing $y_{n}\in\mathbb{R}^N$ and $\tau_{n}>0$ such that
$$
u_{n}^{y_{n},\tau_{n}}(x):=\tau_n^{\frac{N-2}{2}}u_{n}(\tau_{n} x+y_{n})\in C_{0}^{\infty}(\Omega),
$$
which satisfies
$$
\int_{\mathbb{R}^N}|\nabla u_{n}^{y_{n},\tau_{n}}|^{2}dx=\int_{\mathbb{R}^N}|\nabla u_{n}|^{2}dx
$$
and
$$
\int_{\Omega}\int_{\Omega}\frac{|u_{n}^{y_{n},\tau_{n}}(x)|^{2_{\mu}^{\ast}}|u_{n}^{y_{n},\tau_{n}}(y)|^{2_{\mu}^{\ast}}}{|x-y|^{\mu}}dxdy
=\int_{\mathbb{R}^N}\int_{\mathbb{R}^N}\frac{|u_{n}(x)|^{2_{\mu}^{\ast}}|u_{n}(y)|^{2_{\mu}^{\ast}}}{|x-y|^{\mu}}dxdy.
$$
Hence we obtain $S_{H,L}(\Omega)\leq S_{H,L}$.   $S_{H,L}(\Omega)$ is never achieved except when $\Omega=\R^N$ is due to the fact that $\tilde{U}(x)$ is the only class of functions such that the equality holds in the  Hardy-Littlewood-Sobolev inequality and attains the best constant.
\end{proof}

Next we will denote the sequence of eigenvalues of the operator $-\Delta$ on $\Omega$ with homogeneous Dirichlet boundary data by
$$
0<\lambda_{1}<\lambda_{2}\leq...\leq \lambda_{j}\leq\lambda_{j+1}\leq...
$$
and
$$
\lambda_{j}\rightarrow+\infty
$$
as $j\rightarrow+\infty$. Moreover, $\{e_{j}\}_{j\in\mathbb{N}}\subset L^{\infty}(\Omega)$ will be the sequence of eigenfunctions corresponding to $\lambda_{j}$. We recall that this sequence is an orthonormal basis of $L^{2}(\Omega)$ and an orthogonal basis of $H_{0}^{1}(\Omega)$. We denote
$$
\mathbb{E}_{j+1}:=\{u\in H_{0}^{1}(\Omega): \langle u, e_{i}\rangle_{H_{0}^{1}}=0, \forall i=1, 2,...,j\},
$$
while $\mathbb{Y}_{j}:=\mbox{span}\{e_{1},...,e_{j}\}$ will denote the linear subspace generated by the first $j$ eigenfunctions of $-\Delta$ for any $j\in\mathbb{N}$. It is easily seen that $\mathbb{Y}_{j}$ is finite dimensional and $\mathbb{Y}_{j}\oplus\mathbb{E}_{j+1}=H_{0}^{1}(\Omega)$.

In order to study the problem by variational methods, we  introduce the energy functional associated to equation \eqref{CCE} by
$$
J_{\lambda}(u)=\frac{1}{2}\int_{\Omega}|\nabla u|^{2}dx-\frac{1}{22_{\mu}^{\ast}}\int_{\Omega}
\int_{\Omega}\frac{|u(x)|^{2_{\mu}^{\ast}}|u(y)|^{2_{\mu}^{\ast}}}{|x-y|^{\mu}}dxdy-\frac{\lambda}{2}\int_{\Omega}|u|^{2}dx.
$$
Then the Hardy-Littlewood-Sobolev inequality implies $J_{\lambda}$ belongs to $C^{1}(H_{0}^{1}(\Omega),\R)$ with
\begin{equation}
\langle J_{\lambda}^{'}(u),\varphi\rangle=\int_{\Omega}\nabla u\nabla\varphi dx-\int_{\Omega}\int_{\Omega}\frac{|u(x)|^{2_{\mu}^{\ast}}|u(y)|^{2_{\mu}^{\ast}-2}u(y)\varphi(y)}{|x-y|^{\mu}}dxdy-\lambda\int_{\Omega}u\varphi dx
\end{equation}
for all $\varphi\in C_{0}^{\infty}(\Omega)$.  And so  $u$ is a weak solution of \eqref{CCE} if and only if $u$ is a critical point of functional $J_{\lambda}$.

The main results of this paper are stated in the following two theorems.
\begin{thm}\label{EXS}
Assume $\Omega$ is a bounded domain of $\mathbb{R}^N$, with Lipschitz boundary and $0<\mu<N$, the following result holds true:\\
(i) If $N\geq4$, then problem \eqref{CCE} has a nontrivial solution for $\lambda>0$, provided $\lambda$ is not an eigenvalue of $-\Delta$ with homogeneous Dirichlet boundary data.\\
(ii) If $N=3$, then there exist $\lambda_{\ast}$ such that problem \eqref{CCE} has a nontrivial solution for $\lambda>\lambda_{\ast}$, provided $\lambda$ is not an eigenvalue of $-\Delta$ with homogeneous Dirichlet boundary data.
\end{thm}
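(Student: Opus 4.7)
The strategy is variational: find critical points of $J_{\lambda}$ by minimax and show the relevant minimax level lies strictly below a universal threshold $c^{\ast}$ that guarantees Palais--Smale compactness. The argument has three ingredients: (a) a minimax geometry depending on the position of $\lambda$ in the spectrum of $-\Delta$; (b) a local Palais--Smale condition below $c^{\ast}$; (c) test function estimates pushing the minimax level below $c^{\ast}$.

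For (a), when $\lambda\in(0,\lambda_{1})$ one uses the mountain pass theorem: Poincar\'e's inequality makes the quadratic part of $J_{\lambda}$ coercive near $0$, while $J_{\lambda}(tu)\to-\infty$ along any nontrivial direction. When $\lambda_{j}<\lambda<\lambda_{j+1}$ for some $j\geq 1$, one uses a linking theorem with the decomposition $\mathbb{Y}_{j}\oplus\mathbb{E}_{j+1}=H_{0}^{1}(\Omega)$: on $\mathbb{Y}_{j}$ the quadratic form $\int|\nabla u|^{2}-\lambda\int u^{2}$ is non-positive, while on $\mathbb{E}_{j+1}$ it is strictly positive by the variational characterisation of $\lambda_{j+1}$. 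A bounded cap in $\mathbb{Y}_{j}\oplus[0,\infty)u_{\epsilon}$ then links a small sphere in $\mathbb{E}_{j+1}$; the hypothesis that $\lambda$ is not an eigenvalue rules out the degenerate case.

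For (b), I would combine the classical Brezis--Lieb lemma with its nonlocal analogue for the Choquard-type double integral, yielding
$$
\int_{\Omega}\!\!\int_{\Omega}\frac{|u_{n}(x)|^{2_{\mu}^{\ast}}|u_{n}(y)|^{2_{\mu}^{\ast}}}{|x-y|^{\mu}}dxdy=\int_{\Omega}\!\!\int_{\Omega}\frac{|u(x)|^{2_{\mu}^{\ast}}|u(y)|^{2_{\mu}^{\ast}}}{|x-y|^{\mu}}dxdy+\int_{\Omega}\!\!\int_{\Omega}\frac{|w_{n}(x)|^{2_{\mu}^{\ast}}|w_{n}(y)|^{2_{\mu}^{\ast}}}{|x-y|^{\mu}}dxdy+o(1)
$$
for any bounded sequence with $u_{n}\rightharpoonup u$ in $H_{0}^{1}(\Omega)$ and $w_{n}:=u_{n}-u$. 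Combined with the definition of $S_{H,L}$ applied to $w_{n}$ and with Lemma \ref{ExFu} (uniqueness of the extremizer, giving bubble energy exactly $c^{\ast}$), a concentration argument then shows that every Palais--Smale sequence at level $c$ is relatively compact provided
$$
c<c^{\ast}:=\frac{N-\mu+2}{2(2N-\mu)}\,S_{H,L}^{\frac{2N-\mu}{N-\mu+2}}.
$$

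For (c), truncate the Aubin--Talenti bubble $\tilde{U}_{\epsilon}(x)=\epsilon^{-(N-2)/2}\tilde{U}(x/\epsilon)$ by a cut-off $\eta\in C_{c}^{\infty}(\Omega)$, set $u_{\epsilon}=\eta\tilde{U}_{\epsilon}$, and expand as $\epsilon\to 0$: the gradient norm and the double integral agree with $S_{H,L}^{(2N-\mu)/(N-\mu+2)}$ up to remainders of order $\epsilon^{N-2}$ and $\epsilon^{N}$ respectively, while $\int u_{\epsilon}^{2}$ scales as $\epsilon^{2}$ for $N\geq 5$, as $\epsilon^{2}\log(1/\epsilon)$ for $N=4$, and only as $\epsilon$ for $N=3$. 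Maximising $J_{\lambda}(tu_{\epsilon})$ in $t>0$, the mountain pass level drops strictly below $c^{\ast}$ as soon as the $\lambda$-gain dominates the truncation cost, which happens for every $\lambda>0$ when $N\geq 4$, but only for $\lambda$ above an explicit $\lambda_{\ast}$ when $N=3$. For $\lambda>\lambda_{1}$ the analogous linking level is estimated by taking $u=v+tu_{\epsilon}$ with $v\in\mathbb{Y}_{j}$ and using the $L^{\infty}$ eigenfunction bound to control the cross terms, producing the same dichotomy. The main obstacle is the nonlocal Brezis--Lieb step in (b): unlike the local case, the double integral produces mixed cross terms of the form $\int\!\!\int\frac{|u(x)|^{2_{\mu}^{\ast}}|w_{n}(y)|^{2_{\mu}^{\ast}}}{|x-y|^{\mu}}dxdy$ that cannot be dispatched by $L^{p}$ convergence alone and must be handled via the Hardy--Littlewood--Sobolev inequality together with the weak convergence $|w_{n}|^{2_{\mu}^{\ast}}\rightharpoonup 0$ in $L^{2N/(2N-\mu)}$; a parallel subtlety appears in the test function estimates, where the convolution structure prevents the clean decoupling of remainders enjoyed in the classical local Brezis--Nirenberg argument.
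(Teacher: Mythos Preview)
Your plan is correct and follows essentially the same route as the paper: mountain pass for $\lambda\in(0,\lambda_1)$ and linking on $\mathbb{Y}_j\oplus\mathbb{R}u_\varepsilon$ for $\lambda\in(\lambda_j,\lambda_{j+1})$, Palais--Smale compactness below $\frac{N-\mu+2}{2(2N-\mu)}S_{H,L}^{(2N-\mu)/(N-\mu+2)}$ via the nonlocal Br\'ezis--Lieb splitting, and truncated Aubin--Talenti test functions to push the minimax level below that threshold. One minor caution: the remainder in the double integral $\int\!\!\int|u_\varepsilon(x)|^{2_\mu^\ast}|u_\varepsilon(y)|^{2_\mu^\ast}|x-y|^{-\mu}\,dx\,dy$ is of order $\varepsilon^{(2N-\mu)/2}$ rather than $\varepsilon^{N}$ (the convolution structure costs you a bit compared to the local $|u_\varepsilon|_{2^\ast}^{2^\ast}$ estimate), but this is still $o(\varepsilon^{2})$ for $N\geq4$ and $o(\varepsilon)$ for $N=3$, so your dichotomy in (c) goes through unchanged.
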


\begin{thm}\label{NEXS}
If $N\geq3$, $\lambda<0$ and $\Omega\neq \mathbb{R}^N$ is a smooth (possibly unbounded)domain in $\mathbb{R}^N$, which is strictly star-shaped with respect to the origin in $\mathbb{R}^N$, then any solution $u\in H_{0}^{1}(\Omega)$ of problem \eqref{CCE} is trivial.
\end{thm}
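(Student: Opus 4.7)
The plan is to derive a Pohozaev-type identity for solutions of \eqref{CCE} and combine it with the Nehari identity obtained by testing \eqref{CCE} against $u$ itself. Setting $F(u):=\int_\Omega\int_\Omega |u(x)|^{2_\mu^{\ast}}|u(y)|^{2_\mu^{\ast}}|x-y|^{-\mu}\,dxdy$, testing \eqref{CCE} against $u$ yields
$$\int_\Omega|\nabla u|^2\,dx = F(u) + \lambda\int_\Omega u^2\,dx.$$
To derive the Pohozaev identity I would multiply \eqref{CCE} by the dilation field $x\cdot\nabla u$ and integrate. The classical computation for the Laplacian together with $u|_{\partial\Omega}=0$ produces the expression $\tfrac{N-2}{2}\int_\Omega|\nabla u|^2\,dx+\tfrac{1}{2}\int_{\partial\Omega}(x\cdot\nu)(\partial_\nu u)^2\,d\sigma$ on the left, while the linear term on the right contributes $-\tfrac{N\lambda}{2}\int_\Omega u^2\,dx$ after integration by parts.

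The crucial step is the nonlocal term. Using $|u|^{2_\mu^{\ast}-2}u\,(x\cdot\nabla u)=(2_\mu^{\ast})^{-1}x\cdot\nabla|u|^{2_\mu^{\ast}}$ and integrating by parts in $x$ (the boundary term vanishes because $u=0$ on $\partial\Omega$), one obtains
$$-\tfrac{N}{2_\mu^{\ast}}F(u)-\tfrac{1}{2_\mu^{\ast}}\int_\Omega (x\cdot\nabla\phi(x))|u(x)|^{2_\mu^{\ast}}\,dx,\quad \phi(x):=\int_\Omega\frac{|u(y)|^{2_\mu^{\ast}}}{|x-y|^\mu}\,dy.$$
Differentiating the kernel gives $x\cdot\nabla_x |x-y|^{-\mu}=-\mu\,x\cdot(x-y)|x-y|^{-\mu-2}$; symmetrising the resulting double integral via the swap $x\leftrightarrow y$ replaces $x\cdot(x-y)$ by $\tfrac{1}{2}|x-y|^2$, so $\int_\Omega(x\cdot\nabla\phi)|u|^{2_\mu^{\ast}}\,dx=-\tfrac{\mu}{2}F(u)$. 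Using $2_\mu^{\ast}=(2N-\mu)/(N-2)$ the entire nonlocal contribution collapses to $-\tfrac{N-2}{2}F(u)$, yielding the Pohozaev identity
$$\tfrac{N-2}{2}\int_\Omega|\nabla u|^2\,dx + \tfrac{1}{2}\int_{\partial\Omega}(x\cdot\nu)(\partial_\nu u)^2\,d\sigma = \tfrac{N-2}{2}F(u) + \tfrac{N\lambda}{2}\int_\Omega u^2\,dx.$$

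Multiplying the Nehari identity by $(N-2)/2$ and subtracting cancels both $\int_\Omega|\nabla u|^2$ and $F(u)$, leaving
$$\lambda\int_\Omega u^2\,dx = \tfrac{1}{2}\int_{\partial\Omega}(x\cdot\nu)(\partial_\nu u)^2\,d\sigma.$$
Since $\Omega$ is strictly star-shaped with respect to the origin, $x\cdot\nu\ge 0$ on $\partial\Omega$, so the right-hand side is non-negative; the hypothesis $\lambda<0$ forces the left-hand side to be non-positive. Both must therefore vanish, and as $\lambda\neq 0$ one concludes $u\equiv 0$.

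The main obstacle will be justifying these computations rigorously for $H_0^1$ weak solutions rather than classical ones: one needs enough regularity — typically $u\in C^2(\Omega)$ with $\partial_\nu u\in L^2(\partial\Omega)$ — so that the integrations by parts are legitimate and the boundary trace is well-defined. This should follow from a bootstrap argument based on the Hardy-Littlewood-Sobolev inequality (to control the convolution potential $\phi|u|^{2_\mu^{\ast}-2}u$ in suitable $L^p$ spaces) combined with standard elliptic regularity up to the Lipschitz boundary. When $\Omega$ is unbounded one must additionally establish sufficient decay of $u$ and $\partial_\nu u$ at infinity to annihilate the boundary contribution at infinity, which can be achieved by Moser-type iteration once $\phi$ is controlled at large distances.
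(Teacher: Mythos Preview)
Your proposal is correct and follows essentially the same route as the paper: derive the Poho\u{z}aev identity by testing against $x\cdot\nabla u$ (with the symmetrisation trick for the convolution term), combine it with the Nehari identity to obtain $\int_{\partial\Omega}(x\cdot\nu)(\partial_\nu u)^2\,d\sigma=2\lambda\int_\Omega u^2\,dx$, and conclude from star-shapedness and $\lambda<0$. The paper likewise establishes the needed regularity via an HLS-based bootstrap showing $u\in W^{2,p}_{\mathrm{loc}}(\Omega)$ for all $p\ge 1$; your remarks about the boundary trace and the decay issue for unbounded $\Omega$ are in fact more careful than what the paper makes explicit.
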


\ \ \ \ Throughout this paper we denote the norm $\|u\|:=\left(\int_{\Omega}|\nabla u|^{2}dx\right)^{\frac{1}{2}}$ on $H_{0}^{1}(\Omega)$ and  write $|\cdot|_{q}$ for the $L^{q}(\Omega)$-norm for $q\in[1,\infty]$ and always assume $\Omega$ is a bounded domain of $\mathbb{R}^N$ with Lipschitz boundary, $\lambda$ is a real parameter. We denote positive constants by $C, C_{1}, C_{2}, C_{3} \cdots$.

\begin{Def}  Let $I$  be a  $C^1$ functional defined on Banach space $X$,  we say that  $\{u_{n}\}$ is a Palais-Smale sequence of $I$  at $c$ ($(PS)_c$ sequence, for short) if
\begin{equation}
I(u_{n})\rightarrow c,\ \hbox{and}\ \ \  I^{'}(u_{n})\rightarrow 0,\ \ \  \hbox{as}\ \ n\rightarrow+\infty.
\end{equation}
And we say that $I$ satisfies the  Palais-Smale condition at the level $c$ , if every Palais-Smale sequence  at $c$ has  a convergent subsequence.
\end{Def}

An outline of the paper is as follows: In Section 2, we give some preliminary results and prove $(PS)$ condition. In Section 3, we prove the existence of solutions for \eqref{CCE} when $N\geq4$ and $0<\lambda<\lambda_{1}$ by the Mountain pass theorem. In Section 4,  we prove the existence of solutions for \eqref{CCE} when $N\geq4$ and $\lambda>\lambda_{1}$, provided $\lambda$ is not an eigenvalue of $-\Delta$ with homogeneous Dirichlet boundary data, by the Linking Theorem. In Section 5, we investigate the existence of solutions for $\lambda>0$ when $N=3$. In Section 6, we prove a Poho\u{z}aev identity for \eqref{CCE} and use it to prove the nonexistence of solutions.

\section{Preliminary results}

To prove the  $(PS)$ condition, we need a key lemma which is inspired by the Br\'{e}zis-Lieb convergence lemma (see \cite{BL1}). The proof is analogous to that of Lemma 3.5 in \cite{AC} or Lemma 2.4 in \cite{MS1}, but we exhibit it here for completeness. First, we recall that pointwise convergence of a bounded sequence implies weak convergence (see [\cite{Wi2}, Proposition 5.4.7]).

\begin{lem} Let $N\geq3$, $q\in(1,+\infty)$ and $\{u_{n}\}$ is a bounded sequence in $L^{q}(\mathbb{R}^N)$. If $u_{n}\rightarrow u$ almost everywhere in $\mathbb{R}^N$ as $n\rightarrow\infty$, then $u_{n}\rightharpoonup u$ weakly in $L^{q}(\mathbb{R}^N)$.
\end{lem}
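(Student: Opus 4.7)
The plan is to exploit the reflexivity of $L^q(\mathbb{R}^N)$ for $q\in(1,+\infty)$ together with a standard subsequence-extraction argument: first show that any subsequence of $\{u_n\}$ admits a further subsequence converging weakly in $L^q$; then identify any such weak limit with $u$ using the a.e.\ convergence; finally conclude that the whole sequence must converge weakly to $u$.

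I would proceed as follows. First, by Fatou's lemma applied to $|u_n|^q$, the pointwise limit $u$ lies in $L^q(\mathbb{R}^N)$, with $\|u\|_q\leq\liminf_n\|u_n\|_q<\infty$, so the candidate limit makes sense. Second, since $L^q(\mathbb{R}^N)$ is reflexive for $q\in(1,+\infty)$ and $\{u_n\}$ is norm-bounded, every subsequence $\{u_{n_k}\}$ has a further subsequence (not relabeled) converging weakly in $L^q$ to some $v\in L^q(\mathbb{R}^N)$. Third, I would show $v=u$ a.e.\ as follows. Fix a measurable set $E\subset\mathbb{R}^N$ of finite Lebesgue measure. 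By Egorov's theorem, for every $\varepsilon>0$ there exists $F\subset E$ measurable with $|E\setminus F|<\varepsilon$ and $u_{n_k}\to u$ uniformly on $F$. For any bounded measurable $\varphi$, the function $\varphi\chi_F$ belongs to $L^{q'}(\mathbb{R}^N)$ (since $F$ has finite measure), so weak convergence yields
$$
\int_F u_{n_k}\varphi\,dx\longrightarrow\int_F v\,\varphi\,dx,
$$
while uniform convergence on $F$ together with $|F|<\infty$ yields
$$
\int_F u_{n_k}\varphi\,dx\longrightarrow\int_F u\,\varphi\,dx.
$$
Hence $\int_F (u-v)\varphi\,dx=0$ for every bounded measurable $\varphi$, which forces $u=v$ a.e.\ on $F$. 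Letting $\varepsilon\to 0$ gives $u=v$ a.e.\ on $E$, and since $\mathbb{R}^N$ is $\sigma$-finite, $u=v$ a.e.\ on $\mathbb{R}^N$.

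Finally, the standard "subsequence principle" closes the argument: every subsequence of $\{u_n\}$ has a further subsequence converging weakly to the same limit $u$, hence the full sequence converges weakly to $u$ in $L^q(\mathbb{R}^N)$.

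The main obstacle is the identification step $v=u$: a priori weak convergence testifies about integrals against arbitrary $L^{q'}$ functions, whereas a.e.\ convergence is pointwise, so the two notions must be coupled through a truncation argument. Egorov's theorem (combined with the $\sigma$-finiteness of Lebesgue measure) is the natural bridge between them, and once this identification is in place the rest of the proof is formal.
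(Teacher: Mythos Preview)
Your argument is correct. The steps---Fatou to place $u$ in $L^q$, reflexivity to extract a weakly convergent subsequence, Egorov's theorem on finite-measure sets to identify the weak limit with the a.e.\ limit, and the subsequence principle to upgrade to full-sequence convergence---are all sound and fit together cleanly. One tiny cosmetic point: when you write ``letting $\varepsilon\to 0$ gives $u=v$ a.e.\ on $E$,'' it is worth making explicit that you take a countable union of the Egorov sets $F_{1/n}$ to cover $E$ up to a null set; you clearly have this in mind but a referee might want the sentence spelled out.

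As for comparison: the paper does not actually prove this lemma. It merely records it as a known fact, citing Proposition~5.4.7 of Willem's \emph{Functional Analysis}. So there is no ``paper's own proof'' to compare against; your proposal supplies a self-contained argument where the paper defers to a reference. The route you chose is essentially the standard one (and, in fact, close to what Willem does), so nothing is lost or gained methodologically---you have simply filled in what the authors left as a citation.
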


\begin{lem} \label{BLN}Let $N\geq3$ and $0<\mu<N$. If $\{u_{n}\}$ is a bounded sequence in $L^{\frac{2N}{N-2}}(\mathbb{R}^N)$ such that $u_{n}\rightarrow u$ almost everywhere in $\mathbb{R}^N$ as $n\rightarrow\infty$, then the following hold,
$$
\int_{\mathbb{R}^N}(|x|^{-\mu}\ast |u_{n}|^{2_{\mu}^{\ast}})|u_{n}|^{2_{\mu}^{\ast}}dx-\int_{\mathbb{R}^N}(|x|^{-\mu}\ast |u_{n}-u|^{2_{\mu}^{\ast}})|u_{n}-u|^{2_{\mu}^{\ast}}dx\rightarrow\int_{\mathbb{R}^N}(|x|^{-\mu}\ast |u|^{2_{\mu}^{\ast}})|u|^{2_{\mu}^{\ast}}dx
$$
as $n\rightarrow\infty$.
\end{lem}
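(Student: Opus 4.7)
The plan is to use the classical Brezis-Lieb lemma as a decoupling tool and then absorb the nonlocal structure via the Hardy-Littlewood-Sobolev inequality. Write $v_n := u_n - u$, so $\{v_n\}$ remains bounded in $L^{2N/(N-2)}(\mathbb{R}^N)$ and $v_n \to 0$ almost everywhere. Since $2_{\mu}^{\ast}\cdot\frac{2N}{2N-\mu}=\frac{2N}{N-2}$, the refined Brezis-Lieb lemma applied with exponent $q=2_{\mu}^{\ast}$ to the sequence $\{u_n\}$ bounded in $L^{2N/(N-2)}$ yields the strong convergence
\begin{equation*}
w_n := |u_n|^{2_{\mu}^{\ast}} - |v_n|^{2_{\mu}^{\ast}} \longrightarrow |u|^{2_{\mu}^{\ast}}\quad\text{in } L^{2N/(2N-\mu)}(\mathbb{R}^N).
\end{equation*}

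Next I would use the bilinear decomposition
\begin{align*}
&\int_{\mathbb{R}^N}(|x|^{-\mu}\ast|u_n|^{2_{\mu}^{\ast}})|u_n|^{2_{\mu}^{\ast}}\,dx - \int_{\mathbb{R}^N}(|x|^{-\mu}\ast|v_n|^{2_{\mu}^{\ast}})|v_n|^{2_{\mu}^{\ast}}\,dx \\
&\qquad = 2\int_{\mathbb{R}^N}(|x|^{-\mu}\ast|v_n|^{2_{\mu}^{\ast}})\,w_n\,dx + \int_{\mathbb{R}^N}(|x|^{-\mu}\ast w_n)\,w_n\,dx,
\end{align*}
obtained by expanding $|u_n|^{2_{\mu}^{\ast}}=|v_n|^{2_{\mu}^{\ast}}+w_n$ inside the symmetric bilinear form. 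By Proposition \ref{HLS}, convolution with $|x|^{-\mu}$ is a bounded linear operator from $L^{2N/(2N-\mu)}(\mathbb{R}^N)$ into its topological dual $L^{2N/\mu}(\mathbb{R}^N)$.

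For the second term, the strong convergence $w_n \to |u|^{2_{\mu}^{\ast}}$ in $L^{2N/(2N-\mu)}$ is transferred by this bounded operator to $|x|^{-\mu}\ast w_n \to |x|^{-\mu}\ast|u|^{2_{\mu}^{\ast}}$ strongly in $L^{2N/\mu}$, whence the pairing converges to $\int(|x|^{-\mu}\ast|u|^{2_{\mu}^{\ast}})|u|^{2_{\mu}^{\ast}}\,dx$. For the cross term, the preceding weak-convergence lemma applied to $\{|v_n|^{2_{\mu}^{\ast}}\}$, which is bounded in $L^{2N/(2N-\mu)}$ and tends to $0$ almost everywhere, gives $|v_n|^{2_{\mu}^{\ast}}\rightharpoonup 0$ weakly in that space; the boundedness of the convolution operator propagates this to $|x|^{-\mu}\ast|v_n|^{2_{\mu}^{\ast}}\rightharpoonup 0$ weakly in $L^{2N/\mu}$, and pairing this weak limit with the strongly convergent $w_n\to|u|^{2_{\mu}^{\ast}}$ forces the cross term to vanish.

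The expected obstacle is purely bookkeeping: verifying that the exponents really do line up so that the refined Brezis-Lieb lemma produces convergence in $L^{2N/(2N-\mu)}$ rather than merely in $L^1$, and that $L^{2N/(2N-\mu)}$ and $L^{2N/\mu}$ are honest duals via the HLS bilinear pairing. Once that arithmetic is checked, the argument is the standard weak-times-strong cancellation.
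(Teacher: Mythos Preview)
Your proposal is correct and follows essentially the same route as the paper: the refined Br\'ezis--Lieb convergence $|u_n|^{2_\mu^\ast}-|u_n-u|^{2_\mu^\ast}\to|u|^{2_\mu^\ast}$ in $L^{2N/(2N-\mu)}$, the same bilinear expansion of the difference, and the weak convergence $|u_n-u|^{2_\mu^\ast}\rightharpoonup 0$ from the preceding lemma combined with the HLS boundedness of the Riesz convolution. The only cosmetic difference is that you pair the weakly convergent factor $|x|^{-\mu}\ast|v_n|^{2_\mu^\ast}$ against the strongly convergent $w_n$, whereas the paper pairs the strongly convergent $|x|^{-\mu}\ast w_n$ against the weakly convergent $|v_n|^{2_\mu^\ast}$; by symmetry of the bilinear form these are the same thing.
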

\begin{proof}
First, similarly to the proof of the Br\'{e}zis-Lieb Lemma \cite{BL1}, we know that
\begin{equation}
|u_{n}-u|^{2_{\mu}^{\ast}}-|u_{n}|^{2_{\mu}^{\ast}}\rightarrow|u|^{2_{\mu}^{\ast}}
\end{equation}
in $L^{\frac{2N}{2N-\mu}}(\mathbb{R}^N)$ as $n\rightarrow\infty$. The Hardy-Littlewood-Sobolev inequality implies that
\begin{equation}
|x|^{-\mu}\ast(|u_{n}-u|^{2_{\mu}^{\ast}}-|u_{n}|^{2_{\mu}^{\ast}})\rightarrow|x|^{-\mu}\ast|u|^{2_{\mu}^{\ast}}
\end{equation}
in $L^{\frac{2N}{\mu}}(\mathbb{R}^N)$ as $n\rightarrow\infty$. On the other hand, we notice that
\begin{equation}\aligned
&\int_{\mathbb{R}^N}\big(|x|^{-\mu}\ast |u_{n}|^{2_{\mu}^{\ast}}\big)|u_{n}|^{2_{\mu}^{\ast}}dx-\int_{\mathbb{R}^N}\big(|x|^{-\mu}\ast |u_{n}-u|^{2_{\mu}^{\ast}}\big)|u_{n}-u|^{2_{\mu}^{\ast}}dx\\
&=\int_{\mathbb{R}^N}\big(|x|^{-\mu}\ast (|u_{n}|^{2_{\mu}^{\ast}}-|u_{n}-u|^{2_{\mu}^{\ast}})\big)(|u_{n}|^{2_{\mu}^{\ast}}-|u_{n}-u|^{2_{\mu}^{\ast}})dx\\
&+2\int_{\mathbb{R}^N}\big(|x|^{-\mu}\ast (|u_{n}|^{2_{\mu}^{\ast}}-|u_{n}-u|^{2_{\mu}^{\ast}})\big)|u_{n}-u|^{2_{\mu}^{\ast}}dx.\\
\endaligned
\end{equation}
By Lemma 2.2, we have that
\begin{equation}
|u_{n}-u|^{2_{\mu}^{\ast}}\rightharpoonup0
\end{equation}
in $L^{\frac{2N}{2N-\mu}}(\mathbb{R}^N)$ as $n\rightarrow\infty$. From (2.2)-(2.5), we know that the result holds.
\end{proof}

\begin{lem}\label{EN}
Assume $N\geq3$ and $0<\mu<N$. Then
$$
\|\cdot\|_{NL}:=\left(\int_{\Omega}\int_{\Omega}\frac{|\cdot|^{2_{\mu}^{\ast}}|\cdot|^{2_{\mu}^{\ast}}}
{|x-y|^{\mu}}dxdy\right)^{\frac{1}{22_{\mu}^{\ast}}}
$$
defines an norm on $L^{2^{\ast}}(\Omega_{1})$.
\end{lem}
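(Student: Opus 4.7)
The plan is to verify the four norm axioms in turn. Finiteness is Proposition \ref{HLS} applied with $|u|^{2_{\mu}^{\ast}}\in L^{2N/(2N-\mu)}(\Omega)$ (using $2_{\mu}^{\ast}\cdot\frac{2N}{2N-\mu}=2^{\ast}$); positive definiteness follows because the integrand is pointwise nonnegative, so if the double integral vanishes then $|u|^{2_{\mu}^{\ast}}=0$ almost everywhere, and hence $u\equiv 0$; homogeneity is a one-line calculation that pulls $|\lambda|^{2\cdot 2_{\mu}^{\ast}}$ out of the integral and then takes the $\frac{1}{2\cdot 2_{\mu}^{\ast}}$-th root. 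I would dispatch these three axioms in a few lines.

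The main obstacle is the triangle inequality. A direct pointwise argument fails because the map $u\mapsto|u|^{2_{\mu}^{\ast}}$ is superadditive: since $2_{\mu}^{\ast}>1$ one has $(|u|+|v|)^{2_{\mu}^{\ast}}\geq |u|^{2_{\mu}^{\ast}}+|v|^{2_{\mu}^{\ast}}$, and the convexity bound $(|u|+|v|)^{2_{\mu}^{\ast}}\leq 2^{2_{\mu}^{\ast}-1}(|u|^{2_{\mu}^{\ast}}+|v|^{2_{\mu}^{\ast}})$ only yields a quasi-norm with constant strictly greater than $1$. My approach is to rewrite the double integral as a squared $L^{2}$-norm via the Riesz composition identity
\[
\frac{1}{|x-y|^{\mu}}=c_{N,\mu}\int_{\R^{N}}\frac{dz}{|x-z|^{(N+\mu)/2}\,|z-y|^{(N+\mu)/2}},\qquad 0<\mu<N.
\]
Extending $u$ by zero outside $\Omega$ and using Fubini yields
\[
\|u\|_{NL}^{2\cdot 2_{\mu}^{\ast}}=c_{N,\mu}\int_{\R^{N}}h_{u}(z)^{2}\,dz,\qquad h_{u}(z):=\int_{\Omega}\frac{|u(x)|^{2_{\mu}^{\ast}}}{|x-z|^{(N+\mu)/2}}\,dx\geq 0.
\]

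Since $h_{u}\geq 0$, the $L^{2}$-duality $\|h_{u}\|_{L^{2}}=\sup\{\int h_{u}\phi:\|\phi\|_{L^{2}}\leq 1\}$ is attained at $\phi=h_{u}/\|h_{u}\|_{L^{2}}\geq 0$, so the supremum may be restricted to $\phi\geq 0$. Unfolding Fubini,
\[
\|u\|_{NL}=c_{N,\mu}^{1/(2\cdot 2_{\mu}^{\ast})}\sup_{\phi\geq 0,\,\|\phi\|_{L^{2}}\leq 1}\|u\|_{L^{2_{\mu}^{\ast}}(G_{\phi}\,dx)},\qquad G_{\phi}(x):=\int_{\R^{N}}\frac{\phi(z)\,dz}{|x-z|^{(N+\mu)/2}}\geq 0,
\]
exhibiting $\|\cdot\|_{NL}$ as the pointwise supremum of a family of weighted $L^{2_{\mu}^{\ast}}$-seminorms. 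Since $\sup_{\phi}(a_{\phi}+b_{\phi})\leq\sup_{\phi}a_{\phi}+\sup_{\phi}b_{\phi}$, Minkowski's inequality applied inside the supremum then delivers the triangle inequality for $\|\cdot\|_{NL}$. The most delicate points to verify rigorously are the Fubini interchanges in the Riesz composition step and the reduction of the $L^{2}$-duality supremum to nonnegative test functions, which relies precisely on the sign property $h_{u}\geq 0$.
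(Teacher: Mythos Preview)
Your proof is correct and shares the decisive idea with the paper: both of you invoke the Riesz composition (semigroup) identity
\[
\frac{1}{|x-y|^{\mu}}=c_{N,\mu}\int_{\R^{N}}\frac{dz}{|x-z|^{(N+\mu)/2}\,|z-y|^{(N+\mu)/2}}
\]
to rewrite $\|u\|_{NL}^{2\cdot 2_{\mu}^{\ast}}$ as (a constant times) $\int\big(\int|u(y)|^{2_{\mu}^{\ast}}|x-y|^{-(N+\mu)/2}\,dy\big)^{2}dx$. The difference is only in how the triangle inequality is then extracted from this representation. The paper applies Minkowski twice: first the $L^{2_{\mu}^{\ast}}$-Minkowski inequality in the inner integral for each fixed~$x$, then the $L^{2\cdot 2_{\mu}^{\ast}}$-Minkowski inequality in the outer integral. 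You instead use $L^{2}$-duality on $h_{u}$ (legitimate because $h_{u}\geq 0$) and Fubini to realise $\|\cdot\|_{NL}$ as a pointwise supremum over nonnegative $\phi$ of weighted $L^{2_{\mu}^{\ast}}$-norms $\|u\|_{L^{2_{\mu}^{\ast}}(G_{\phi}\,dx)}$, so the triangle inequality is inherited from each member of the family. Both routes are short and standard once the Riesz composition step is in place; yours is a touch more conceptual, and you are more careful in taking the $z$-integral over~$\R^{N}$ (the paper writes~$\Omega$ there, which is not literally the semigroup identity, though the argument is unaffected).
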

\begin{proof} 
By the semigroup property of the Riesz potential,  we obtain
$$
\int_{\Omega}\int_{\Omega}\frac{|u(x)|^{2_{\mu}^{\ast}}|u(y)|^{2_{\mu}^{\ast}}}
{|x-y|^{\mu}}dxdy
=\int_{\Omega}\left(\int_{\Omega}\frac{|u(y)|^{2_{\mu}^{\ast}}}
{|x-y|^{\frac{N+\mu}{2}}}dy\right)^{2}dx
$$
for every $u\in L^{2^{\ast}}(\Omega)$. Then, by the Minkowski inequality, we know, for any $x\in\Omega$
$$\aligned
\left(\int_{\Omega}\frac{|u(y)+v(y)|^{2_{\mu}^{\ast}}}
{|x-y|^{\frac{N+\mu}{2}}}dy\right)^{2}&=\left(\int_{\Omega}\left|\frac{u(y)}
{|x-y|^{\frac{N+\mu}{2}\cdot\frac{1}{2_{\mu}^{\ast}}}}+\frac{v(y)}
{|x-y|^{\frac{N+\mu}{2}\cdot\frac{1}{2_{\mu}^{\ast}}}}\right|^{2_{\mu}^{\ast}}dy
\right)^{\frac{1}{2_{\mu}^{\ast}}\cdot22_{\mu}^{\ast}}\\
&\leq\left(\left(\int_{\Omega}\frac{|u(y)|^{2_{\mu}^{\ast}}}
{|x-y|^{\frac{N+\mu}{2}}}dy\right)^{2\cdot\frac{1}{22_{\mu}^{\ast}}}+\left(\int_{\Omega}\frac{|v(y)|^{2_{\mu}^{\ast}}}
{|x-y|^{\frac{N+\mu}{2}}}dy\right)^{2\cdot\frac{1}{22_{\mu}^{\ast}}}\right)^{22_{\mu}^{\ast}}.
\endaligned
$$
Notice that the integrals are nonnegative and so, by the Minkowski inequality again, we have
$$\aligned
&\left(\int_{\Omega}\left(\int_{\Omega}\frac{|u(y)+v(y)|^{2_{\mu}^{\ast}}}
{|x-y|^{\frac{N+\mu}{2}}}dy\right)^{2}dx\right)^{\frac{1}{22_{\mu}^{\ast}}}\\
&\leq\left(\int_{\Omega}\left(\int_{\Omega}\frac{|u(y)|^{2_{\mu}^{\ast}}}
{|x-y|^{\frac{N+\mu}{2}}}dy\right)^{2}dx\right)^{\frac{1}{22_{\mu}^{\ast}}}
+\left(\int_{\Omega}\left(\int_{\Omega}\frac{|v(y)|^{2_{\mu}^{\ast}}}
{|x-y|^{\frac{N+\mu}{2}}}dy\right)^{2}dx\right)^{\frac{1}{22_{\mu}^{\ast}}},
\endaligned
$$
that is
$$
\|u+v\|_{NL}\leq \|u\|_{NL}+\|v\|_{NL}
$$
for every $u,v\in L^{2^{\ast}}(\Omega)$.
So, it is easy to verify that $\|\cdot\|_{NL}$ is a norm on $L^{2^{\ast}}(\Omega)$.
\end{proof}

\begin{lem} \label{WSo}
Let $N\geq3$, $0<\mu<N$ and $\lambda>0$. If $\{u_{n}\}$ is a  $(PS)_c$ sequence of $J_{\lambda}$, then
$\{u_{n}\}$ is bounded. Let $u_0\in H_{0}^{1}(\Omega)$ be the weak limit of $\{u_{n}\}$, then $u_0$ is a weak solution of problem \eqref{CCE}.
\end{lem}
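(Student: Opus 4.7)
The plan has two parts: a uniform $H_0^1(\Omega)$-bound on $\{u_n\}$, and passage to the weak limit. For the bound I would exploit two Ambrosetti--Rabinowitz--type combinations. Since the Choquard term is homogeneous of degree $2\cdot 2_\mu^* > 2$, the combination $J_\lambda(u_n)-\tfrac{1}{2\cdot 2_\mu^*}\langle J_\lambda'(u_n),u_n\rangle$ yields
\[
\left(\tfrac{1}{2}-\tfrac{1}{2\cdot 2_\mu^*}\right)\left(\|u_n\|^2-\lambda|u_n|_2^2\right)=c+o(1)+o(\|u_n\|),
\]
while the combination $J_\lambda(u_n)-\tfrac{1}{2}\langle J_\lambda'(u_n),u_n\rangle$ isolates the nonlocal nonlinearity and gives $\int_\Omega\int_\Omega|u_n(x)|^{2_\mu^*}|u_n(y)|^{2_\mu^*}|x-y|^{-\mu}\,dx\,dy\leq C(1+\|u_n\|)$. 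When $0<\lambda<\lambda_1$, the Poincaré inequality $|u_n|_2^2\leq \|u_n\|^2/\lambda_1$ turns the first identity into $(1-\lambda/\lambda_1)\|u_n\|^2\leq C(1+\|u_n\|)$, giving boundedness at once. For $\lambda$ larger but not an eigenvalue, pick $j$ with $\lambda_j<\lambda<\lambda_{j+1}$ and decompose $u_n=v_n+w_n$ along $\mathbb{Y}_j\oplus \mathbb{E}_{j+1}$: the quadratic form $Q=\|\cdot\|^2-\lambda|\cdot|_2^2$ is coercive on $\mathbb{E}_{j+1}$ and negative definite on the finite-dimensional $\mathbb{Y}_j$. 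Testing $J_\lambda'(u_n)$ separately against $w_n$ and $v_n$, using the equivalence of the $H_0^1$, $L^2$ and $L^\infty$ norms on $\mathbb{Y}_j$ together with a Hardy--Littlewood--Sobolev bound on the Choquard nonlinearity, one bounds $\|w_n\|$ and $\|v_n\|$ separately and hence $\|u_n\|$.

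For the weak-limit step, a bounded sequence admits (up to a subsequence) $u_n\rightharpoonup u_0$ weakly in $H_0^1(\Omega)$, $u_n\to u_0$ strongly in $L^q(\Omega)$ for every $q\in[1,2^*)$ by Rellich--Kondrachov, and $u_n\to u_0$ almost everywhere. Fix $\varphi\in C_0^\infty(\Omega)$; the Dirichlet and mass terms in $\langle J_\lambda'(u_n),\varphi\rangle$ pass to the limit by weak convergence. The critical Choquard term
\[
\int_\Omega \left(|x|^{-\mu}*|u_n|^{2_\mu^*}\right)|u_n|^{2_\mu^*-2}u_n\,\varphi\,dx
\]
I would split into a weakly convergent factor and a strongly convergent one. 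The sequence $|u_n|^{2_\mu^*}$ is bounded in $L^{2N/(2N-\mu)}(\Omega)$ and converges a.e.\ to $|u_0|^{2_\mu^*}$, so Lemma~2.2 gives weak convergence in $L^{2N/(2N-\mu)}$; since the Riesz convolution is a bounded linear map $L^{2N/(2N-\mu)}\to L^{2N/\mu}$ by Hardy--Littlewood--Sobolev, $|x|^{-\mu}*|u_n|^{2_\mu^*}\rightharpoonup |x|^{-\mu}*|u_0|^{2_\mu^*}$ weakly in $L^{2N/\mu}$. The compact support of $\varphi$ and Rellich together give strong $L^q$-convergence of $u_n$ on this support for every $q<2^*$; combined with the elementary inequality $||a|^{2_\mu^*-2}a-|b|^{2_\mu^*-2}b|\leq C(|a|^{2_\mu^*-2}+|b|^{2_\mu^*-2})|a-b|$ and Vitali convergence this gives $|u_n|^{2_\mu^*-2}u_n\,\varphi\to |u_0|^{2_\mu^*-2}u_0\,\varphi$ strongly in $L^{2N/(2N-\mu)}(\Omega)$ (the relevant strict inequality $2N/(2N-\mu)<2^*/(2_\mu^*-1)$ holds for $N\geq 3$). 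Pairing this strong convergence with the weak convergence in the dual space $L^{2N/\mu}$ makes the Choquard term pass to the limit, so $\langle J_\lambda'(u_0),\varphi\rangle=0$ for every $\varphi\in C_0^\infty(\Omega)$; density in $H_0^1(\Omega)$ then gives that $u_0$ is a weak solution of \eqref{CCE}.

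The main technical obstacle is the boundedness step for $\lambda>\lambda_1$: the indefinite quadratic form $\|\cdot\|^2-\lambda|\cdot|_2^2$ prevents a direct Ambrosetti--Rabinowitz conclusion, so one must combine the spectral splitting with the explicit a priori bound on the critical Choquard term extracted from the PS data and carefully control the finite-dimensional component $v_n$ by norm equivalence and a Hardy--Littlewood--Sobolev estimate on the nonlocal nonlinearity.
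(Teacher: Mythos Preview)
Your proposal is correct and follows the same overall architecture as the paper (an Ambrosetti--Rabinowitz combination plus spectral splitting for boundedness, then weak convergence of each term in $\langle J_\lambda'(u_n),\varphi\rangle$), but two ingredients differ in a way worth noting. For boundedness when $\lambda>\lambda_1$, the paper uses a \emph{single} combination $J_\lambda(u_n)-\beta\langle J_\lambda'(u_n),u_n\rangle$ with $\beta\in(1/(2\cdot 2_\mu^*),1/2)$, which keeps both a positive $\|z_n\|^2$ term and a positive $\|u_n\|_{NL}^{2\cdot 2_\mu^*}$ term simultaneously; the negative $|y_n|_2^2$ piece is then absorbed via the triangle inequality for $\|\cdot\|_{NL}$ (Lemma~\ref{EN}) and norm equivalence on the finite-dimensional $\mathbb{Y}_j$. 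Your alternative---extracting the Choquard bound from $J_\lambda-\tfrac12 J_\lambda'u_n$ and then testing $J_\lambda'(u_n)$ against the components $v_n,w_n$ separately---is also viable, but be aware that the HLS estimate on the nonlocal term then produces powers of $|u_n|_{2^*}$ rather than of $\|u_n\|_{NL}$, so closing the loop still requires an extra step (e.g.\ a contradiction/normalization argument). For the passage to the limit in the Choquard term your route is actually a bit cleaner than the paper's: you pair weak convergence of $|x|^{-\mu}*|u_n|^{2_\mu^*}$ in $L^{2N/\mu}$ against \emph{strong} convergence of $|u_n|^{2_\mu^*-2}u_n\varphi$ in $L^{2N/(2N-\mu)}$, exploiting the strict subcriticality $(2_\mu^*-1)\cdot\tfrac{2N}{2N-\mu}<2^*$; the paper instead asserts weak convergence of the full product $(|x|^{-\mu}*|u_n|^{2_\mu^*})|u_n|^{2_\mu^*-2}u_n$ in $L^{2N/(N+2)}$, which is justified only through the ``bounded + a.e.\ $\Rightarrow$ weak'' principle (Lemma~2.2) applied to the product.
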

\begin{proof}  It is easy to see $c\geq0$ and there exists $C_{1}>0$ such that
$$
|J_{\lambda}(u_{n})|\leq C_{1},\ \ \
|\langle J_{\lambda}^{'}(u_{n}),\frac{u_{n}}{\|u_{n}\|}\rangle|\leq C_{1}.
$$
Let $\beta\in(\frac{1}{22_{\mu}^{\ast}},\frac{1}{2})$. For $n$ large enough, we have
$$\aligned
C_{1}(1+\|u_{n}\|)&\geq J_{\lambda}(u_{n})-\beta\langle J_{\lambda}^{'}(u_{n}),u_{n}\rangle\\
&=(\frac{1}{2}-\beta)(\|u_{n}\|^{2}-\lambda|u_{n}|_{2}^{2})+(\beta-\frac{1}{22_{\mu}^{\ast}})
\int_{\Omega}\int_{\Omega}\frac{|u_{n}(x)|^{2_{\mu}^{\ast}}|u_{n}(y)|^{2_{\mu}^{\ast}}}{|x-y|^{\mu}}dxdy\\
&\geq(\frac{1}{2}-\beta)(\delta\|z_{n}\|^{2}+(\lambda_{1}-\lambda)|y_{n}|_{2}^{2})+(\beta-\frac{1}{22_{\mu}^{\ast}})
\int_{\Omega}\int_{\Omega}\frac{|u_{n}(x)|^{2_{\mu}^{\ast}}|u_{n}(y)|^{2_{\mu}^{\ast}}}{|x-y|^{\mu}}dxdy,
\endaligned
$$
where $u_{n}=z_{n}+y_{n}$, $z_{n}\in\mathbb{E}_{j+1}$, $y_{n}\in \mathbb{Y}_{j}$. It is then easy to verify that $\{u_{n}\}$ is bounded in $H_{0}^{1}(\Omega)$ using the fact that that $\mathbb{Y}_{j}$ is finite dimensional and Lemma \ref{EN}.

Since  $H_{0}^{1}(\Omega)$ is  reflexive, up to a subsequence, still denoted by $u_{n}$, there exists $u_0\in H_{0}^{1}(\Omega)$ such that $u_{n}\rightharpoonup u_0$ in $H_{0}^{1}(\Omega)$ and
$
u_{n}\rightharpoonup u_0
$
in $L^{2^{*}}(\Omega) $ as $n\rightarrow+\infty$. Then
$$
|u_{n}|^{2_{\mu}^{*}}\rightharpoonup |u_0|^{2_{\mu}^{*}} \hspace{3.14mm} \mbox{in} \hspace{3.14mm} L^{\frac{2N}{2N-\mu}}(\Omega)
$$
as $n\rightarrow+\infty$. By the Hardy-Littlewood-Sobolev inequality,
the Riesz potential defines a linear continuous map from  $L^{\frac{2N}{2N-\mu}}(\Omega)$ to $L^{\frac{2N}{\mu}}(\Omega)$,  we know that
$$
|x|^{-\mu}\ast|u_{n}|^{2_{\mu}^{*}}\rightharpoonup |x|^{-\mu}\ast|u_0|^{2_{\mu}^{*}} \hspace{3.14mm} \mbox{in} \hspace{3.14mm} L^{\frac{2N}{\mu}}(\Omega)
$$
as $n\rightarrow+\infty$. Combining with the fact that
$$
|u_{n}|^{2_{\mu}^{\ast}-2}u_{n}\rightharpoonup |u_0|^{2_{\mu}^{\ast}-2}u_0 \hspace{3.14mm} \mbox{in} \hspace{3.14mm} L^{\frac{2N}{N-\mu+2}}(\Omega)
$$
as $n\rightarrow+\infty$, we have
$$
(|x|^{-\mu}\ast|u_{n}|^{2_{\mu}^{*}})|u_{n}|^{2_{\mu}^{\ast}-2}u_{n}\rightharpoonup (|x|^{-\mu}\ast|u_0|^{2_{\mu}^{*}})|u_0|^{2_{\mu}^{\ast}-2}u_0 \hspace{3.14mm} \mbox{in} \hspace{3.14mm} L^{\frac{2N}{N+2}}(\Omega)
$$
as $n\rightarrow+\infty$. Since, for any $\varphi\in H_{0}^{1}(\Omega)$,
$$
0\leftarrow\langle J_{\lambda}^{'}(u_{n}),\varphi\rangle=\int_{\Omega}\nabla u_{n}\nabla\varphi dx
-\lambda\int_{\Omega}u_{n}\varphi dx-
\int_{\Omega}\int_{\Omega}\frac{|u_{n}(x)|^{2_{\mu}^{\ast}}|u_{n}(y)|^{2_{\mu}^{\ast}-2}u_{n}(y)\varphi(y)}{|x-y|^{\mu}}dxdy.
$$
Passing to the limit as $n\rightarrow+\infty$ we obtain
$$
\int_{\Omega}\nabla u_0\nabla\varphi dx
-\lambda\int_{\Omega}u_0\varphi dx-
\int_{\Omega}\int_{\Omega}\frac{|u_0(x)|^{2_{\mu}^{\ast}}|u_0(y)|^{2_{\mu}^{\ast}-2}u_0(y)\varphi(y)}{|x-y|^{\mu}}dxdy=0
$$
for any $\varphi\in H_{0}^{1}(\Omega)$, which means $u_0$ is a weak solution of problem \eqref{CCE}.

Finally, taking $\varphi=u_0\in H_{0}^{1}(\Omega)$ as a test function in \eqref{CCE}, we have
$$
\int_{\Omega}|\nabla u_0|^{2}dx=
\lambda\int_{\Omega}u_0^{2}dx+
\int_{\Omega}\int_{\Omega}\frac{|u_0(x)|^{2_{\mu}^{\ast}}|u_0(y)|^{2_{\mu}^{\ast}}}{|x-y|^{\mu}}dxdy,
$$
and so
$$
J_{\lambda}(u_0)=\frac{N+2-\mu}{4N-2\mu}\int_{\Omega}\int_{\Omega}\frac{|u_0(x)|^{2_{\mu}^{\ast}}|u_0(y)|^{2_{\mu}^{\ast}}}
{|x-y|^{\mu}}dxdy\geq0.
$$
\end{proof}

\begin{lem}\label{ConPro} Let $N\geq3$, $0<\mu<N$ and $\lambda>0$. If $\{u_{n}\}$ is a $(PS)_c$ sequence of $J_{\lambda}$ with
\begin{equation}
c<\frac{N+2-\mu}{4N-2\mu}S_{H,L}^{\frac{2N-\mu}{N+2-\mu}},
\end{equation}
then  $\{u_{n}\}$ has a convergent  subsequence.
\end{lem}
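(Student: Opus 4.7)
The plan is to follow the classical Brezis--Nirenberg strategy, with the local critical term replaced by the nonlocal Hardy--Littlewood--Sobolev term using the Brezis--Lieb splitting from Lemma \ref{BLN}. First I would apply Lemma \ref{WSo} to conclude that $\{u_n\}$ is bounded in $H_0^1(\Omega)$, and along a subsequence $u_n\rightharpoonup u_0$ weakly in $H_0^1(\Omega)$, $u_n\to u_0$ a.e.\ in $\Omega$, and, crucially, $u_n\to u_0$ strongly in $L^2(\Omega)$ by Rellich--Kondrachov. Lemma \ref{WSo} already shows that $u_0$ is a weak solution, so $\langle J_\lambda'(u_0),u_0\rangle=0$ and
$$
J_\lambda(u_0)=\frac{N+2-\mu}{4N-2\mu}\int_\Omega\!\!\int_\Omega\frac{|u_0(x)|^{2_\mu^\ast}|u_0(y)|^{2_\mu^\ast}}{|x-y|^\mu}dxdy\geq 0.
$$

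Next set $v_n:=u_n-u_0$. Then $v_n\rightharpoonup 0$ in $H_0^1(\Omega)$, $v_n\to 0$ in $L^2(\Omega)$, and $v_n\to 0$ a.e. The standard Brezis--Lieb lemma gives $\|u_n\|^2=\|u_0\|^2+\|v_n\|^2+o(1)$, while Lemma \ref{BLN} yields the corresponding decomposition of the double integral. Inserting these into $J_\lambda(u_n)\to c$, and using the strong $L^2$-convergence to eliminate the $\lambda$ term, I would obtain
$$
\frac{1}{2}\|v_n\|^2-\frac{1}{2\cdot 2_\mu^\ast}\int_\Omega\!\!\int_\Omega\frac{|v_n(x)|^{2_\mu^\ast}|v_n(y)|^{2_\mu^\ast}}{|x-y|^\mu}dxdy=c-J_\lambda(u_0)+o(1).
$$
Testing $J_\lambda'(u_n)$ against $u_n$, subtracting $\langle J_\lambda'(u_0),u_0\rangle=0$, and using Lemma \ref{BLN} together with the $L^2$-convergence gives
$$
\|v_n\|^2=\int_\Omega\!\!\int_\Omega\frac{|v_n(x)|^{2_\mu^\ast}|v_n(y)|^{2_\mu^\ast}}{|x-y|^\mu}dxdy+o(1).
$$

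Up to a further subsequence, call $b\geq 0$ the common limit of these two quantities. By the very definition of $S_{H,L}$ applied to $v_n\in D_0^{1,2}(\Omega)\subset D^{1,2}(\R^N)$ (trivially extended by zero),
$$
\|v_n\|^2\geq S_{H,L}\Bigl(\int_\Omega\!\!\int_\Omega\frac{|v_n(x)|^{2_\mu^\ast}|v_n(y)|^{2_\mu^\ast}}{|x-y|^\mu}dxdy\Bigr)^{\frac{N-2}{2N-\mu}},
$$
so, in the limit, $b\geq S_{H,L}\,b^{(N-2)/(2N-\mu)}$. If $b>0$ this forces $b\geq S_{H,L}^{(2N-\mu)/(N+2-\mu)}$, and then
$$
c=J_\lambda(u_0)+\Bigl(\tfrac{1}{2}-\tfrac{1}{2\cdot 2_\mu^\ast}\Bigr)b=J_\lambda(u_0)+\frac{N+2-\mu}{4N-2\mu}b\geq \frac{N+2-\mu}{4N-2\mu}S_{H,L}^{\frac{2N-\mu}{N+2-\mu}},
$$
contradicting the hypothesis on $c$. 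Hence $b=0$, so $\|v_n\|\to 0$, i.e.\ $u_n\to u_0$ strongly in $H_0^1(\Omega)$.

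The routine but delicate point is the Brezis--Lieb splitting for the nonlocal term together with the identification of the weak limits of $|u_n|^{2_\mu^\ast-2}u_n$ and of the Riesz convolution; this is precisely what Lemma \ref{BLN} and the continuity of the Riesz potential in Lemma \ref{WSo} already take care of, so the real substance of the argument is simply the energy bookkeeping and the sharp application of $S_{H,L}$. The one step to watch is the vanishing of the linear $L^2$ contributions in both $J_\lambda(u_n)-J_\lambda(u_0)-\tfrac{1}{2}\|v_n\|^2+\tfrac{1}{2\cdot 2_\mu^\ast}\iint\cdots$ and $\langle J_\lambda'(u_n),u_n\rangle-\langle J_\lambda'(u_0),u_0\rangle-\|v_n\|^2+\iint\cdots$, which is secured by the compact embedding $H_0^1(\Omega)\hookrightarrow L^2(\Omega)$ since $\Omega$ is bounded.
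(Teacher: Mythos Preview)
Your proposal is correct and follows essentially the same route as the paper: both arguments set $v_n=u_n-u_0$, invoke the Br\'ezis--Lieb lemma together with Lemma~\ref{BLN} to split the energy, use $\langle J_\lambda'(u_0),u_0\rangle=0$ and $J_\lambda(u_0)\ge 0$, extract a common limit $b$ for $\|v_n\|^2$ and the nonlocal term, and then apply the definition of $S_{H,L}$ to rule out $b>0$. The only cosmetic difference is that you keep the identity $c=J_\lambda(u_0)+\frac{N+2-\mu}{4N-2\mu}b$ before dropping $J_\lambda(u_0)\ge 0$, whereas the paper passes to the inequality $c\ge \frac{N+2-\mu}{4N-2\mu}b$ directly.
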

\begin{proof}
Let $u_0$ be the weak limit of  $\{u_{n}\}$ obtained in  Lemma \ref{WSo} and define  $v_{n}:=u_{n}-u_0$, then we know $v_{n}\rightharpoonup0$ in $H_{0}^{1}(\Omega)$ and $v_{n}\rightarrow 0$ a.e. in $\Omega$. Moreover, by the Br\'{e}zis-Lieb Lemma  in \cite{BL1} and Lemma \ref{BLN}, we know
$$
\int_{\Omega}|\nabla u_{n}|^{2}dx=\int_{\Omega}|\nabla v_{n}|^{2}dx
+\int_{\Omega}|\nabla u_0|^{2}dx+o(1),
$$
$$
\int_{\Omega}|u_{n}|^{2}dx=\int_{\Omega}|v_{n}|^{2}dx+\int_{\Omega}|u_0|^{2}dx+o_n(1)
$$
and
$$
\int_{\Omega}\int_{\Omega}\frac{|u_{n}(x)|^{2_{\mu}^{\ast}}|u_{n}(y)|^{2_{\mu}^{\ast}}}
{|x-y|^{\mu}}dxdy=\int_{\Omega}\int_{\Omega}\frac{|v_{n}(x)|^{2_{\mu}^{\ast}}|v_{n}(y)|^{2_{\mu}^{\ast}}}
{|x-y|^{\mu}}dxdy+\int_{\Omega}\int_{\Omega}\frac{|u_0(x)|^{2_{\mu}^{\ast}}|u_0(y)|^{2_{\mu}^{\ast}}}
{|x-y|^{\mu}}dxdy+o_n(1).
$$
Then, we have
\begin{equation}\label{C1}
\aligned
c\leftarrow J_{\lambda}(u_{n})&=\frac{1}{2}\int_{\Omega}|\nabla u_{n}|^{2}dx-\frac{\lambda}{2}\int_{\Omega} u_{n}^{2}dx-\frac{1}{22_{\mu}^{\ast}}\int_{\Omega}\int_{\Omega}\frac{|u_{n}(x)|^{2_{\mu}^{\ast}}|u_{n}(y)|^{2_{\mu}^{\ast}}}
{|x-y|^{\mu}}dxdy\\
&=\frac{1}{2}\int_{\Omega}|\nabla v_{n}|^{2}dx-\frac{\lambda}{2}\int_{\Omega}v_{n}^{2}dx+\frac{1}{2}\int_{\Omega}|\nabla u_0|^{2}dx-\frac{\lambda}{2}\int_{\Omega} u_0^{2}dx\\
&\hspace{0.5cm}-\frac{1}{22_{\mu}^{\ast}}\int_{\Omega}\int_{\Omega}\frac{|v_{n}(x)|^{2_{\mu}^{\ast}}|v_{n}(y)|^{2_{\mu}^{\ast}}}
{|x-y|^{\mu}}dxdy-\frac{1}{22_{\mu}^{\ast}}\int_{\Omega}\int_{\Omega}\frac{|u_0(x)|^{2_{\mu}^{\ast}}|u_0(y)|^{2_{\mu}^{\ast}}}
{|x-y|^{\mu}}dxdy+o_n(1)\\
&=J_{\lambda}(u_0)+\frac{1}{2}\int_{\Omega}|\nabla v_{n}|^{2}dx-\frac{\lambda}{2}\int_{\Omega} v_{n}^{2}dx-\frac{1}{22_{\mu}^{\ast}}\int_{\Omega}\int_{\Omega}\frac{|v_{n}(x)|^{2_{\mu}^{\ast}}|v_{n}(y)|^{2_{\mu}^{\ast}}}
{|x-y|^{\mu}}dxdy+o_n(1)\\
&\geq \frac{1}{2}\int_{\Omega}|\nabla v_{n}|^{2}dx-\frac{1}{22_{\mu}^{\ast}}\int_{\Omega}\int_{\Omega}\frac{|v_{n}(x)|^{2_{\mu}^{\ast}}|v_{n}(y)|^{2_{\mu}^{\ast}}}
{|x-y|^{\mu}}dxdy+o_n(1),
\endaligned
\end{equation}
since $J_{\lambda}(u_0)\geq 0$ and $\displaystyle\int_{\Omega}v_{n}^{2}\rightarrow 0$, as $n\rightarrow+\infty$.
Similarly, since  $\langle J_{\lambda}^{'}(u_0),u_0\rangle=0$, we have

\begin{equation}\label{C2}
\aligned
o_n(1)&= \langle J_{\lambda}^{'}(u_{n}),u_{n}\rangle\\
& =\int_{\Omega}|\nabla u_{n}|^{2}dx-\lambda\int_{\Omega} u_{n}^{2}dx-\int_{\Omega}\int_{\Omega}\frac{|u_{n}(x)|^{2_{\mu}^{\ast}}|u_{n}(y)|^{2_{\mu}^{\ast}}}
{|x-y|^{\mu}}dxdy\\
&=\int_{\Omega}|\nabla v_{n}|^{2}dx-\lambda\int_{\Omega} v_{n}^{2}dx+\int_{\Omega}|\nabla u_0|^{2}dx-\lambda\int_{\Omega} u_0^{2}dx\\
&\hspace{0.5cm}-\int_{\Omega}\int_{\Omega}\frac{|v_{n}(x)|^{2_{\mu}^{\ast}}|v_{n}(y)|^{2_{\mu}^{\ast}}}
{|x-y|^{\mu}}dxdy-\int_{\Omega}\int_{\Omega}\frac{|u_0(x)|^{2_{\mu}^{\ast}}|u_0(y)|^{2_{\mu}^{\ast}}}
{|x-y|^{\mu}}dxdy+o_n(1)\\
&=\langle J_{\lambda}^{'}(u_0),u_0\rangle+\int_{\Omega}|\nabla v_{n}|^{2}dx-\lambda\int_{\Omega}v_{n}^{2}dx-\int_{\Omega}\int_{\Omega}\frac{|v_{n}(x)|^{2_{\mu}^{\ast}}|v_{n}(y)|^{2_{\mu}^{\ast}}}
{|x-y|^{\mu}}dxdy+o_n(1)\\
&=\int_{\Omega}|\nabla v_{n}|^{2}dx-\int_{\Omega}\int_{\Omega}\frac{|v_{n}(x)|^{2_{\mu}^{\ast}}|v_{n}(y)|^{2_{\mu}^{\ast}}}
{|x-y|^{\mu}}dxdy+o_n(1).
\endaligned
\end{equation}
From \eqref{C2},  we know there exists a nonnegative constant $b$ such that
$$
\int_{\Omega}|\nabla v_{n}|^{2}dx\rightarrow b
$$
and
$$
\int_{\Omega}\int_{\Omega}\frac{|v_{n}(x)|^{2_{\mu}^{\ast}}|v_{n}(y)|^{2_{\mu}^{\ast}}}
{|x-y|^{\mu}}dxdy\rightarrow b,
$$
as $n\rightarrow+\infty$.
From \eqref{C1} and  \eqref{C2}, we obtain
\begin{equation}\label{C3}
c\geq \frac{N+2-\mu}{4N-2\mu}b.
\end{equation}
By the definition of the best constant $S_{H,L}$ in \eqref{S1}, we have
$$
S_{H,L}\left(\int_{\Omega}\int_{\Omega}\frac{|v_{n}(x)|^{2_{\mu}^{\ast}}|v_{n}(y)|^{2_{\mu}^{\ast}}}
{|x-y|^{\mu}}dxdy\right)^{\frac{N-2}{2N-\mu}}\leq\int_{\Omega}|\nabla v_{n}|^{2}dx,
$$
which yields $b\geq S_{H,L}b^{\frac{N-2}{2N-\mu}}$. Thus we have either $b=0$ or $b\geq S_{H,L}^{\frac{2N-\mu}{N-\mu+2}}$. If $b=0$, the proof is complete. Otherwise $b\geq S_{H,L}^{\frac{2N-\mu}{N-\mu+2}}$,  then we obtain from \eqref{C3},
$$
\frac{N+2-\mu}{4N-2\mu}S_{H,L}^{\frac{2N-\mu}{N-\mu+2}}\leq\frac{N+2-\mu}{4N-2\mu}b\leq c,
$$
which contradicts with the fact that $c<\frac{N+2-\mu}{4N-2\mu}S_{H,L}^{\frac{2N-\mu}{N+2-\mu}}$. Thus $b=0$, and
$$
\|u_{n}-u_0\|\rightarrow0
$$
as $n\rightarrow+\infty$. This ends the proof of Lemma \ref{ConPro}.
\end{proof}

\section{The case $N\geq4$, $0<\lambda<\lambda_{1}$}
 We devote this Section to prove Theorem \ref{EXS} for the case $N\geq4$ and $0<\lambda<\lambda_{1}$.

By Lemma \ref{ExFu}, we know that
$U(x)=\frac{[N(N-2)]^{\frac{N-2}{4}}}{(1+|x|^{2})^{\frac{N-2}{2}}}$  is a minimizer for both $S$ and $S_{H,L}$.
Without loss of generality, we may assume that $0\in\Omega$ and $B_{\delta}\subset\Omega\subset B_{2\delta}$. Let $\psi\in C_{0}^{\infty}(\Omega)$ such that
$$
\left\{\begin{array}{l}
\displaystyle \psi(x)=\left\{\begin{array}{l}
\displaystyle 1 \hspace{13.14mm} \mbox{if}\hspace{2.14mm} x\in B_{\delta},\\
\displaystyle 0 \hspace{13.14mm} \mbox{if} \hspace{2.14mm}x\in \mathbb{R}^N \setminus\Omega,\\
\end{array}
\right.\\
\displaystyle 0\leq\psi(x)\leq1 \hspace{19.14mm} \forall x\in \mathbb{R}^N ,\\
\displaystyle |D\psi(x)|\leq C=const \hspace{13.14mm} \forall x\in \mathbb{R}^N .\\
\end{array}
\right.
$$
We define, for $\varepsilon>0$,
$$\aligned
U_{\varepsilon}(x)&:=\varepsilon^{\frac{2-N}{2}}U(\frac{x}{\varepsilon}),\\
u_{\varepsilon}(x)&:=\psi(x)U_{\varepsilon}(x).\\
\endaligned
$$
From Lemma 1.46 of \cite{Wi} and Lemma \ref{ExFu}, we know
\begin{equation}\label{E2}
|\nabla U_{\varepsilon}|_{2}^{2}=|U_{\varepsilon}|_{2^{\ast}}^{2^{\ast}}=S^{\frac{N}{2}},
\end{equation}
and as $\varepsilon\rightarrow0^{+}$,
\begin{equation}\label{E5}
\int_{\Omega}|\nabla u_{\varepsilon}|^{2}dx=S^{\frac{N}{2}}+O(\varepsilon^{N-2})
=C(N,\mu))^{\frac{N-2}{2N-\mu}\cdot\frac{N}{2}}S_{H,L}^{\frac{N}{2}}+O(\varepsilon^{N-2}),
\end{equation}
\begin{equation}\label{E3}
\int_{\Omega}|u_{\varepsilon}|^{2^{\ast}}dx=S^{\frac{N}{2}}+O(\varepsilon^{N})
\end{equation}
and
\begin{equation}\label{E4}
\int_{\Omega}|u_{\varepsilon}|^{2}dx\geq\left\{\begin{array}{l}
\displaystyle d\varepsilon^{2}|ln\varepsilon|+O(\varepsilon^{2}) \hspace{10.14mm} \mbox{if}\hspace{2.14mm} N=4,\\
\displaystyle d\varepsilon^{2}+O(\varepsilon^{N-2}) \hspace{13.14mm} \mbox{if} \hspace{2.14mm}N\geq5,\\
\end{array}
\right.
\end{equation}
where $d$ is a positive constant. 

Using the Hardy-Littlewood-Sobolev inequality, on one hand, we get
\begin{equation}\label{E6}
\aligned
\left(\int_{\Omega}\int_{\Omega}\frac{|u_{\varepsilon}(x)|^{2_{\mu}^{\ast}}|u_{\varepsilon}(y)|^{2_{\mu}^{\ast}}}
{|x-y|^{\mu}}dxdy\right)^{\frac{N-2}{2N-\mu}}
&\leq C(N,\mu)^{\frac{N-2}{2N-\mu}}|u_{\varepsilon}|_{2^{\ast}}^{2}\\
&=C(N,\mu)^{\frac{N-2}{2N-\mu}}\big(S^{\frac{N}{2}}+O(\varepsilon^{N})\big)^{\frac{N-2}{N}}\\
&=C(N,\mu)^{\frac{N-2}{2N-\mu}}\big(C(N,\mu)^{\frac{N-2}{2N-\mu}\cdot\frac{N}{2}}S_{H,L}^{\frac{N}{2}}+O(\varepsilon^{N})\big)^{\frac{N-2}{N}}\\
&=C(N,\mu)^{\frac{N-2}{2N-\mu}\cdot\frac{N}{2}}S_{H,L}^{\frac{N-2}{2}}+O(\varepsilon^{N-2}).
\endaligned
\end{equation}
While on the other hand,
\begin{equation}\label{E7}
\aligned
\int_{\Omega}\int_{\Omega}\frac{|u_{\varepsilon}(x)|^{2_{\mu}^{\ast}}|u_{\varepsilon}(y)|^{2_{\mu}^{\ast}}}
{|x-y|^{\mu}}dxdy
&\geq \int_{B_{\delta}}\int_{B_{\delta}}\frac{|u_{\varepsilon}(x)|^{2_{\mu}^{\ast}}|u_{\varepsilon}(y)|^{2_{\mu}^{\ast}}}{|x-y|^{\mu}}dxdy\\
&=\int_{B_{\delta}}\int_{B_{\delta}}\frac{|U_{\varepsilon}(x)|^{2_{\mu}^{\ast}}|U_{\varepsilon}(y)|^{2_{\mu}^{\ast}}}{|x-y|^{\mu}}dxdy\\
&=\int_{\mathbb{R}^N}\int_{\mathbb{R}^N}\frac{|U_{\varepsilon}(x)|^{2_{\mu}^{\ast}}|U_{\varepsilon}(y)|^{2_{\mu}^{\ast}}}{|x-y|^{\mu}}dxdy\\
&\hspace{1cm}-2\int_{\mathbb{R}^N\setminus B_{\delta}}\int_{B_{\delta}}\frac{|U_{\varepsilon}(x)|^{2_{\mu}^{\ast}}|U_{\varepsilon}(y)|^{2_{\mu}^{\ast}}}{|x-y|^{\mu}}dxdy\\
&\hspace{1.5cm}-\int_{\mathbb{R}^N\setminus B_{\delta}}\int_{\mathbb{R}^N\setminus B_{\delta}}\frac{|U_{\varepsilon}(x)|^{2_{\mu}^{\ast}}|U_{\varepsilon}(y)|^{2_{\mu}^{\ast}}}{|x-y|^{\mu}}dxdy\\
&=C(N,\mu)^{\frac{N}{2}}S_{H,L}^{\frac{2N-\mu}{2}}-2\D-\E,
\endaligned
\end{equation}
where
$$
\D=\int_{\mathbb{R}^N\setminus B_{\delta}}\int_{B_{\delta}}\frac{|U_{\varepsilon}(x)|^{2_{\mu}^{\ast}}|U_{\varepsilon}(y)|^{2_{\mu}^{\ast}}}{|x-y|^{\mu}}dxdy\ $$
and
$$
\E=\int_{\mathbb{R}^N\setminus B_{\delta}}\int_{\mathbb{R}^N\setminus B_{\delta}}\frac{|U_{\varepsilon}(x)|^{2_{\mu}^{\ast}}|U_{\varepsilon}(y)|^{2_{\mu}^{\ast}}}{|x-y|^{\mu}}dxdy.
$$

 By direct computation, we know
\begin{equation}\label{E8}
\aligned
\D&=\int_{\mathbb{R}^N\setminus B_{\delta}}\int_{B_{\delta}}\frac{|U_{\varepsilon}(x)|^{2_{\mu}^{\ast}}|U_{\varepsilon}(y)|^{2_{\mu}^{\ast}}}{|x-y|^{\mu}}dxdy\\
&=\int_{\mathbb{R}^N\setminus B_{\delta}}\int_{B_{\delta}}\frac{\varepsilon^{\mu-2N}[N(N-2)]^{\frac{2N-\mu}{2}}}
{(1+|\frac{x}{\varepsilon}|^{2})^{\frac{2N-\mu}{2}}|x-y|^{\mu}(1+|\frac{y}{\varepsilon}|^{2})^{\frac{2N-\mu}{2}}}dxdy\\
&=\varepsilon^{2N-\mu}[N(N-2)]^{\frac{2N-\mu}{2}}\int_{\mathbb{R}^N\setminus B_{\delta}}\int_{B_{\delta}}\frac{1}
{(\varepsilon^{2}+|x|^{2})^{\frac{2N-\mu}{2}}|x-y|^{\mu}(\varepsilon^{2}+|y|^{2})^{\frac{2N-\mu}{2}}}dxdy\\
&\leq O(\varepsilon^{2N-\mu})\left(\int_{\mathbb{R}^N\setminus B_{\delta}}\frac{1}
{(\varepsilon^{2}+|x|^{2})^{N}}dx\right)^{\frac{2N-\mu}{2N}}\left(\int_{B_{\delta}}\frac{1}{(\varepsilon^{2}+|y|^{2})^{N}}dy\right)^{\frac{2N-\mu}{2N}}\\
&\leq O(\varepsilon^{2N-\mu})\left(\int_{\mathbb{R}^N\setminus B_{\delta}}\frac{1}
{|x|^{2N}}dx\right)^{\frac{2N-\mu}{2N}}\left(\int_{0}^{\delta}\frac{r^{N-1}}{(\varepsilon^{2}+r^{2})^{N}}dr\right)^{\frac{2N-\mu}{2N}}\\
%&=O(\varepsilon^{2N-\mu})\frac{1}{\varepsilon^{2N-\mu}}\int_{0}^{\delta}\frac{r^{N-1}}{(1+\frac{r^{2}}{\varepsilon^{2}})^{\frac{2N-\mu}{2}}}dr\\
%&=O(\varepsilon^{2N-\mu})\frac{\varepsilon^{N}}{\varepsilon^{2N-\mu}}\int_{0}^{\delta}\frac{\frac{r^{N-1}}{\varepsilon^{N-1}}}{(1+\frac{r^{2}}{\varepsilon^{2}})^{\frac{2N-\mu}{2}}}d(\frac{r}{\varepsilon})\\
&=O(\varepsilon^{\frac{2N-\mu}{2}})\left(\int_{0}^{\frac{\delta}{\varepsilon}}\frac{z^{N-1}}{(1+z^{2})^{N}}dz\right)^{\frac{2N-\mu}{2N}}\\
&\leq O(\varepsilon^{\frac{2N-\mu}{2}})\left(\int_{0}^{+\infty}\frac{z^{N-1}}{(1+z^{2})^{N}}dz\right)^{\frac{2N-\mu}{2N}}\\
&=O(\varepsilon^{\frac{2N-\mu}{2}})
\endaligned
\end{equation}
and
\begin{equation}\label{E9}
\aligned
\E&=\int_{\mathbb{R}^N\setminus B_{\delta}}\int_{\mathbb{R}^N\setminus B_{\delta}}\frac{|U_{\varepsilon}(x)|^{2_{\mu}^{\ast}}|U_{\varepsilon}(y)|^{2_{\mu}^{\ast}}}{|x-y|^{\mu}}dxdy\\
&=\int_{\mathbb{R}^N\setminus B_{\delta}}\int_{\mathbb{R}^N\setminus B_{\delta}}\frac{\varepsilon^{\mu-2N}[N(N-2)]^{\frac{2N-\mu}{2}}}
{(1+|\frac{x}{\varepsilon}|^{2})^{\frac{2N-\mu}{2}}|x-y|^{\mu}(1+|\frac{y}{\varepsilon}|^{2})^{\frac{2N-\mu}{2}}}dxdy\\
&=\varepsilon^{2N-\mu}[N(N-2)]^{\frac{2N-\mu}{2}}\int_{\mathbb{R}^N\setminus B_{\delta}}\int_{\mathbb{R}^N\setminus B_{\delta}}\frac{1}
{(\varepsilon^{2}+|x|^{2})^{\frac{2N-\mu}{2}}|x-y|^{\mu}(\varepsilon^{2}+|y|^{2})^{\frac{2N-\mu}{2}}}dxdy\\
&\leq\varepsilon^{2N-\mu}[N(N-2)]^{\frac{2N-\mu}{2}}\int_{\mathbb{R}^N\setminus B_{\delta}}\int_{\mathbb{R}^N\setminus B_{\delta}}\frac{1}
{|x|^{2N-\mu}|x-y|^{\mu}|y|^{2N-\mu}}dxdy\\
%&\leq  C(N, \mu)\varepsilon^{2N-\mu}[N(N-2)]^{\frac{2N-\mu}{2}}\Big(\int_{\mathbb{R}^N\setminus B_{\delta}}\frac{1}{|x|^{2N}}dx\Big)^{\frac{2N-\mu}{N}}\\
&=O(\varepsilon^{2N-\mu}).\\
\endaligned
\end{equation}
It follows from \eqref{E7} to \eqref{E9}  that
\begin{equation}\label{E10}
\aligned
\left(\int_{\Omega}\int_{\Omega}\frac{|u_{\varepsilon}(x)|^{2_{\mu}^{\ast}}|u_{\varepsilon}(y)|^{2_{\mu}^{\ast}}}
{|x-y|^{\mu}}dxdy\right)^{\frac{N-2}{2N-\mu}}
&\geq \left(C(N,\mu)^{\frac{N}{2}}S_{H,L}^{\frac{2N-\mu}{2}}-O(\varepsilon^{\frac{2N-\mu}{2}})-O(\varepsilon^{2N-\mu})\right)^{\frac{N-2}{2N-\mu}}\\
&=\left(C(N,\mu)^{\frac{N}{2}}S_{H,L}^{\frac{2N-\mu}{2}}-O(\varepsilon^{\frac{2N-\mu}{2}})\right)^{\frac{N-2}{2N-\mu}}.\\
\endaligned
\end{equation}
%By (3.6) and (3.10), we have
%\begin{equation}
%\left(\int_{\Omega}\int_{\Omega}\frac{|u_{\varepsilon}(x)|^{2_{\mu}^{\ast}}|u_{\varepsilon}(y)|^{2_{\mu}^{\ast}}}
%{|x-y|^{\mu}}dxdy\right)^{\frac{N-2}{2N-\mu}}
%\geq C(N,\mu)^{\frac{N-2}{2N-\mu}\cdot\frac{N}{2}}S_{H,L}^{\frac{N-2}{2}}-O(\varepsilon^{\frac{N(N-2)}{2N-\mu}}).
%\end{equation}

When $N=3$, \eqref{E5} and \eqref{E10} also hold.

\begin{lem}\label{Element} If $N\geq4$ and $\lambda>0$, then,
there exists $v\in H_{0}^{1}(\Omega)\backslash\{{0}\}$ such that
$$
\frac{|\nabla v|_{2}^{2}-\lambda|v|_{2}^{2}}
{\|v\|_{NL}^{2}}<S_{H,L}.
$$
\end{lem}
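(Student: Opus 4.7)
The plan is to exhibit a suitable $v$ explicitly by choosing $v = u_{\varepsilon}$ with $\varepsilon > 0$ sufficiently small, where $u_{\varepsilon}$ is the truncated Aubin--Talenti family constructed just before this lemma. Set
$$K_{1} := C(N,\mu)^{\frac{N(N-2)}{2(2N-\mu)}} S_{H,L}^{N/2}, \qquad K_{2} := C(N,\mu)^{\frac{N(N-2)}{2(2N-\mu)}} S_{H,L}^{(N-2)/2},$$
so that $K_{1}/K_{2} = S_{H,L}$. The strategy is to combine the estimates \eqref{E5}, \eqref{E4}, and \eqref{E10} to show that the $\lambda$-term in the numerator produces a strictly negative correction that dominates the positive error terms coming from the truncation.

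The execution proceeds in three steps. First, I would rewrite \eqref{E5} as
$$|\nabla u_{\varepsilon}|_{2}^{2} - \lambda |u_{\varepsilon}|_{2}^{2} = K_{1} - \lambda |u_{\varepsilon}|_{2}^{2} + O(\varepsilon^{N-2}).$$
Second, I would apply the elementary expansion $(M - t)^{\alpha} \geq M^{\alpha} - \alpha M^{\alpha - 1} t$ (valid for $M > 0$ and $t > 0$ small) to the right-hand side of \eqref{E10} with $\alpha = (N-2)/(2N-\mu)$ and $M = C(N,\mu)^{N/2} S_{H,L}^{(2N-\mu)/2}$, obtaining
$$\|u_{\varepsilon}\|_{NL}^{2} \geq K_{2} - O\!\left(\varepsilon^{\frac{2N-\mu}{2}}\right).$$
Third, dividing and expanding $1/(K_{2} - o(\varepsilon^{2})) = K_{2}^{-1} + o(\varepsilon^{2})$, I would obtain
$$\frac{|\nabla u_{\varepsilon}|_{2}^{2} - \lambda |u_{\varepsilon}|_{2}^{2}}{\|u_{\varepsilon}\|_{NL}^{2}} \leq S_{H,L} - \frac{\lambda}{K_{2}} |u_{\varepsilon}|_{2}^{2} + \frac{O(\varepsilon^{N-2})}{K_{2}} + o(\varepsilon^{2}),$$
where I used that $(2N-\mu)/2 > 2$ whenever $N \geq 4$ (indeed $\mu < N \leq 2N-4$), so the denominator error is $o(\varepsilon^{2})$.

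It remains to compare the $\lambda$-gain with the residual $O(\varepsilon^{N-2})$. For $N \geq 5$ the lower bound $|u_{\varepsilon}|_{2}^{2} \geq d\varepsilon^{2} + O(\varepsilon^{N-2})$ in \eqref{E4} yields $-\frac{\lambda d}{K_{2}} \varepsilon^{2} + o(\varepsilon^{2})$, which is strictly negative for $\varepsilon$ small. For $N = 4$ the error $O(\varepsilon^{N-2})$ has the same order $\varepsilon^{2}$ as the naive $\lambda$-term, but \eqref{E4} supplies the logarithmic divergence $|u_{\varepsilon}|_{2}^{2} \geq d\varepsilon^{2}|\ln \varepsilon| + O(\varepsilon^{2})$, and since $\varepsilon^{2}|\ln\varepsilon| \gg \varepsilon^{2}$ as $\varepsilon \to 0^{+}$, the $\lambda$-contribution still wins. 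In both cases the ratio is strictly less than $S_{H,L}$ once $\varepsilon$ is small enough, completing the proof.

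The main obstacle is the careful bookkeeping in the critical dimension $N = 4$: the gradient expansion contributes an error of exactly the same order as the first candidate $\lambda$-correction, and it is precisely the logarithmic growth in the $L^{2}$ estimate (inherited from the classical Brezis--Nirenberg analysis) that tips the balance. The nonlocal character of the denominator introduces an extra exponent $\alpha = (N-2)/(2N-\mu)$ that must be handled via the binomial expansion in Step~2, but once this is done the argument parallels the classical local Brezis--Nirenberg test-function computation.
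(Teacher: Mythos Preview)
Your approach is correct and essentially identical to the paper's: choose $v=u_\varepsilon$, plug in \eqref{E5}, \eqref{E4}, \eqref{E10}, and compare orders, treating $N=4$ and $N\ge 5$ separately so that the logarithmic factor rescues the critical dimension.

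One small slip worth fixing: in Step~2 the inequality $(M-t)^\alpha \ge M^\alpha - \alpha M^{\alpha-1}t$ is false for $\alpha=(N-2)/(2N-\mu)\in(0,1)$; concavity of $x\mapsto x^\alpha$ gives the reverse direction. What you actually need (and what holds) is $(M-t)^\alpha \ge M^\alpha - M^{\alpha-1}t$, coming from $(1-s)^\alpha\ge 1-s$ for $0\le s\le 1$, or simply the asymptotic $(M-t)^\alpha = M^\alpha - O(t)$. Since only the order $O(\varepsilon^{(2N-\mu)/2})=o(\varepsilon^2)$ enters the subsequent argument, your conclusions are unaffected.
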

\begin{proof}
If $N=4$, from \eqref{E4}, \eqref{E5} and \eqref{E10}, we can obtain
\begin{equation}\label{E11}\aligned
\frac{|\nabla u_{\varepsilon}|_{2}^{2}-\lambda|u_{\varepsilon}|_{2}^{2}}
{\|u_{\varepsilon}\|_{NL}^{2}}
&\leq \frac{C(4,\mu)^{\frac{4}{8-\mu}}S_{H,L}^{2}-\lambda d\varepsilon^{2}|ln\varepsilon| +O(\varepsilon^{2})}{\left(C(4,\mu)^{2}S_{H,L}^{\frac{8-\mu}{2}}-O(\varepsilon^{4-\frac{\mu}{2}})\right)^{\frac{2}{8-\mu}}}\\
&=S_{H,L}-\frac{\lambda d\varepsilon^{2}|ln\varepsilon|}{\left(C(4,\mu)^{2}S_{H,L}^{\frac{8-\mu}{2}}-O(\varepsilon^{4-\frac{\mu}{2}})\right)^{\frac{2}{8-\mu}}}+O(\varepsilon^{2})\\
&\leq S_{H,L}-\lambda d\varepsilon^{2}|ln\varepsilon|+O(\varepsilon^{2})\\
&<S_{H,L}.
\endaligned
\end{equation}
If $N\geq5$, using  \eqref{E4}, \eqref{E5} and \eqref{E10} again, we have
\begin{equation}\aligned
\frac{|\nabla u_{\varepsilon}|_{2}^{2}-\lambda|u_{\varepsilon}|_{2}^{2}}
{\|u_{\varepsilon}\|_{NL}^{2}}
&\leq \frac{C(N,\mu)^{\frac{N-2}{2N-\mu}\cdot\frac{N}{2}}S_{H,L}^{\frac{N}{2}}-\lambda d\varepsilon^{2}+O(\varepsilon^{N-2})}
{\left(C(N,\mu)^{\frac{N}{2}}S_{H,L}^{\frac{2N-\mu}{2}}-O(\varepsilon^{N-\frac{\mu}{2}})\right)^{\frac{N-2}{2N-\mu}}}\\
&\leq S_{H,L}-\frac{\lambda d\varepsilon^{2}}{\left(C(N,\mu)^{\frac{N}{2}}S_{H,L}^{\frac{2N-\mu}{2}}
-O(\varepsilon^{N-\frac{\mu}{2}})\right)^{\frac{N-2}{2N-\mu}}}+O(\varepsilon^{\frac{N}{2}})\\
&\leq S_{H,L}-\lambda d\varepsilon^{2}+O(\varepsilon^{\frac{N}{2}})\\
&<S_{H,L}.
\endaligned
\end{equation}
From the arguments above, we may take $v:=u_{\varepsilon}$ with $\vr$ small enough and then the conclusion follows immediatelly.
\end{proof}

\begin{lem} \label{MPG}If $N\geq3$ and $\lambda\in(0,\lambda_{1})$, then, the functional $J_{\lambda}$ satisfies the following properties:\\
(i) There exist $\alpha,\rho>0$ such that $J_{\lambda}(u)\geq\alpha$ for $\|u\|=\rho$. \\
(ii) There exists $e\in H_{0}^{1}(\Omega)$ with $\|e\|>\rho$ such that $J_{\lambda}(e)<0$.
\end{lem}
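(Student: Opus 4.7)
\medskip

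\noindent\textbf{Proof proposal for Lemma \ref{MPG}.}
The plan is to check the two standard geometric conditions directly from the variational characterization of $\lambda_1$ and from the Hardy--Littlewood--Sobolev inequality built into the definition of $S_{H,L}$.

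For part (i), I would start from the functional
$$
J_{\lambda}(u)=\tfrac{1}{2}\|u\|^{2}-\tfrac{\lambda}{2}|u|_{2}^{2}
-\tfrac{1}{22_{\mu}^{\ast}}\int_{\Omega}\int_{\Omega}\tfrac{|u(x)|^{2_{\mu}^{\ast}}|u(y)|^{2_{\mu}^{\ast}}}{|x-y|^{\mu}}\,dxdy
$$
and handle the three pieces separately. The variational characterization $\lambda_{1}|u|_{2}^{2}\leq\|u\|^{2}$, together with $\lambda<\lambda_{1}$, gives
$$
\tfrac{1}{2}\|u\|^{2}-\tfrac{\lambda}{2}|u|_{2}^{2}\geq\tfrac{1}{2}\!\left(1-\tfrac{\lambda}{\lambda_{1}}\right)\|u\|^{2}.
$$
For the nonlocal term, I would use the definition of $S_{H,L}$ in \eqref{S1} (valid on $\Omega$ after zero extension since $D_0^{1,2}(\Omega)\subset D^{1,2}(\mathbb{R}^N)$) to deduce
$$
\int_{\Omega}\int_{\Omega}\tfrac{|u(x)|^{2_{\mu}^{\ast}}|u(y)|^{2_{\mu}^{\ast}}}{|x-y|^{\mu}}\,dxdy
\leq S_{H,L}^{-\frac{2N-\mu}{N-2}}\|u\|^{22_{\mu}^{\ast}}.
$$
Combining these two estimates yields
$$
J_{\lambda}(u)\geq \tfrac{1}{2}\!\left(1-\tfrac{\lambda}{\lambda_{1}}\right)\|u\|^{2}
-\tfrac{1}{22_{\mu}^{\ast}}S_{H,L}^{-\frac{2N-\mu}{N-2}}\|u\|^{22_{\mu}^{\ast}},
$$
and since $22_{\mu}^{\ast}>2$, the quadratic term dominates for small $\|u\|$. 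Choosing $\rho>0$ sufficiently small and setting $\alpha$ to be the value of the right-hand side at $\|u\|=\rho$ finishes this part.

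For part (ii), I would fix any nonzero $v\in H_{0}^{1}(\Omega)$ (for instance an eigenfunction $e_{1}$, or the function $u_{\varepsilon}$ constructed just before this lemma) and study the restriction
$$
J_{\lambda}(tv)=\tfrac{t^{2}}{2}\bigl(\|v\|^{2}-\lambda|v|_{2}^{2}\bigr)
-\tfrac{t^{22_{\mu}^{\ast}}}{22_{\mu}^{\ast}}\int_{\Omega}\int_{\Omega}\tfrac{|v(x)|^{2_{\mu}^{\ast}}|v(y)|^{2_{\mu}^{\ast}}}{|x-y|^{\mu}}\,dxdy,\qquad t\geq 0.
$$
Here the double integral is strictly positive (Lemma \ref{EN} ensures it defines a norm on $L^{2^{\ast}}(\Omega)$), and because $22_{\mu}^{\ast}>2$ the negative term dominates as $t\to+\infty$, so $J_{\lambda}(tv)\to-\infty$. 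Taking $t$ large enough to guarantee simultaneously $\|tv\|>\rho$ and $J_{\lambda}(tv)<0$ and setting $e:=tv$ concludes the proof.

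There is no serious obstacle; everything reduces to the coercivity of the quadratic part (ensured by $\lambda<\lambda_{1}$) versus the superquadratic decay of the nonlocal term, together with the continuity of the nonlocal nonlinearity delivered by the Hardy--Littlewood--Sobolev inequality. The only minor care is to keep track of the exponent $22_{\mu}^{\ast}=\tfrac{2(2N-\mu)}{N-2}>2$ and to invoke \eqref{S1} rather than Sobolev embedding to bound the nonlocal term directly in terms of $\|u\|$.
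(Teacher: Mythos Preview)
Your proposal is correct and follows essentially the same approach as the paper. The only cosmetic difference is that in part (i) the paper bounds the nonlocal term in two steps (Hardy--Littlewood--Sobolev to reach $|u|_{2^{\ast}}^{22_{\mu}^{\ast}}$, then Sobolev embedding to reach $\|u\|^{22_{\mu}^{\ast}}$), whereas you go directly through the constant $S_{H,L}$; since $S_{H,L}=S/C(N,\mu)^{\frac{N-2}{2N-\mu}}$ these are the same estimate written differently, and part (ii) is identical.
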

\begin{proof} (i) By $\lambda\in(0,\lambda_{1})$, the Sobolev embedding and the Hardy-Littlewood-Sobolev inequality, for all $u\in H_{0}^{1}(\Omega)\backslash\ \{0\}$ we have
$$
\aligned
J_{\lambda}(u)&\geq\frac{1}{2}\int_{\Omega}|\nabla u|^{2}dx-\frac{\lambda}{2\lambda_{1}}\int_{\Omega}|\nabla u|^{2}dx-\frac{1}{22_{\mu}^{\ast}}C_{0}|u|_{2^{\ast}}^{2(\frac{2N-\mu}{N-2})}\\
&\geq\frac{1}{2}(1-\frac{\lambda}{\lambda_{1}})\|u\|^{2}-\frac{1}{22_{\mu}^{\ast}}C_{0}C_{1}\|u\|^{2(\frac{2N-\mu}{N-2})}.\\
\endaligned
$$
Since $2<2(\frac{2N-\mu}{N-2})$, we can choose some $\alpha,\rho>0$ such that $J_{\lambda}(u)\geq\alpha$ for $\|u\|=\rho$.

(ii) For some $u_{1}\in H_{0}^{1}(\Omega)\backslash\ \{0\}$, we have
$$
J_{\lambda}(tu_{1})=\frac{t^{2}}{2}\int_{\Omega}|\nabla u_{1}|^{2}dx-\frac{\lambda t^{2}}{2}\int_{\Omega} u_{1}^{2}dx-\frac{t^{22_{\mu}^{\ast}}}{22_{\mu}^{\ast}}
\int_{\Omega}\int_{\Omega}\frac{|u_{1}(x)|^{2_{\mu}^{\ast}}|u_{1}(y)|^{2_{\mu}^{\ast}}}
{|x-y|^{\mu}}dxdy<0
$$
for $t>0$ large enough. Hence, we can take an $e:=t_{1}u_{1}$ for some $t_{1}>0$ and (ii) follows.
\end{proof}

\begin{Prop}\label{PSS}
By Lemma \ref{MPG} and the mountain pass theorem without $(PS)$ condition (cf. \cite{Wi}), there
exists a $(PS)$ sequence $\{u_{n}\}$ such that $J_{\lambda}(u_{n})\rightarrow c$ and $ J_{\lambda}^{'}(u_{n})\rightarrow0$ in $H_{0}^{1}(\Omega)^{-1}$ at the minimax level
\begin{equation}\label{MPL}
c^*=\inf\limits_{\gamma\in\Gamma}\max\limits_{t\in[0,1]}J_{\lambda}(\gamma(t))>0,
\end{equation}
where
$$
\Gamma:=\{\gamma\in C([0,1],H_{0}^{1}(\Omega)):\gamma(0)=0,J_{\lambda}(\gamma(1))<0\}.
$$
\end{Prop}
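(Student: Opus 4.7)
The plan is to package Lemma \ref{MPG} into the hypotheses of the mountain pass theorem without $(PS)$ condition (as stated, for instance, in Theorem 2.9 of Willem \cite{Wi}) and then read off the existence of the $(PS)_{c^*}$ sequence and the positivity of $c^*$ as immediate consequences. Since $J_\lambda\in C^1(H_0^1(\Omega),\R)$ by the Hardy-Littlewood-Sobolev inequality and $J_\lambda(0)=0$, the only geometric input needed is the pair of facts supplied by Lemma \ref{MPG}: there exist $\alpha,\rho>0$ with $J_\lambda(u)\ge \alpha$ whenever $\|u\|=\rho$, and there exists $e\in H_0^1(\Omega)$ with $\|e\|>\rho$ and $J_\lambda(e)<0$.

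First I would check that the class $\Gamma=\{\gamma\in C([0,1],H_0^1(\Omega)):\gamma(0)=0,\ J_\lambda(\gamma(1))<0\}$ is nonempty, which is immediate from Lemma \ref{MPG}(ii): the straight line $\gamma_0(t):=te$ satisfies $\gamma_0(0)=0$ and $J_\lambda(\gamma_0(1))=J_\lambda(e)<0$, so $\gamma_0\in\Gamma$. In particular the minimax value $c^*$ in \eqref{MPL} is well defined and finite.

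Next I would show $c^*\ge \alpha>0$. Any $\gamma\in\Gamma$ is a continuous path from $0$ to a point $\gamma(1)$ with $\|\gamma(1)\|>\rho$ (because $J_\lambda(\gamma(1))<0<\alpha\le J_\lambda(u)$ for $\|u\|=\rho$ forces $\|\gamma(1)\|\ne \rho$, and if $\|\gamma(1)\|\le\rho$ one would obtain $J_\lambda(\gamma(1))\ge 0$ from the estimate in the proof of Lemma \ref{MPG}(i), whereas $J_\lambda(\gamma(1))<0$). By the intermediate value theorem the real-valued continuous map $t\mapsto \|\gamma(t)\|$ takes the value $\rho$ for some $t_0\in(0,1)$, and hence $\max_{t\in[0,1]}J_\lambda(\gamma(t))\ge J_\lambda(\gamma(t_0))\ge \alpha$. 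Taking the infimum over $\gamma\in\Gamma$ yields $c^*\ge \alpha>0$.

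Finally I would apply the deformation-lemma version of the mountain pass theorem without $(PS)$: if $c^*$ were not a critical value of $J_\lambda$ in the sense that no $(PS)_{c^*}$ sequence existed, then the quantitative deformation lemma would allow pushing any nearly minimizing path $\gamma\in\Gamma$ slightly below $c^*$ while keeping the endpoints fixed (since $J_\lambda(0)=0<c^*$ and $J_\lambda(e)<0<c^*$ are preserved by a deformation supported in a level-set near $c^*$), contradicting the definition of $c^*$. This produces a sequence $\{u_n\}\subset H_0^1(\Omega)$ with $J_\lambda(u_n)\to c^*$ and $J_\lambda'(u_n)\to 0$ in $H_0^1(\Omega)^{-1}$. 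The main (and essentially only) subtlety is this last deformation step, but it is entirely standard; the $(PS)$ condition itself is not required at this stage, which is precisely why the result provides only a $(PS)$ sequence and defers convergence to Lemma \ref{ConPro}.
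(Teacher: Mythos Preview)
Your proposal is correct and follows the same route as the paper: the paper does not give a separate proof of this proposition at all, treating it as an immediate consequence of Lemma~\ref{MPG} together with the mountain pass theorem without $(PS)$ condition from \cite{Wi}. Your write-up simply unpacks that citation---verifying $\Gamma\neq\emptyset$, establishing $c^*\ge\alpha>0$ via the intermediate value theorem, and sketching the deformation argument---which is exactly the standard content of the cited theorem.
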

\noindent
{\bf Proof of Theorem \ref{EXS}: Case  $N\geq4$, $0<\lambda<\lambda_{1}$.}
 From Lemma \ref{Element}, we know there exists $v\in H_{0}^{1}(\Omega)\backslash\{{0}\}$ such that
$$
\frac{|\nabla v|_{2}^{2}-\lambda|v|_{2}^{2}}
{\|v\|_{NL}^{2}}<S_{H,L}.
$$
Therefore,
$$
\aligned
0<\max_{t\geq0}J_{\lambda}(tv)&=\max_{t\geq0}\left\{\frac{t^{2}}{2}\int_{\Omega}|\nabla v|^{2}dx-\frac{\lambda t^{2}}{2}\int_{\Omega} v^{2}dx-\frac{t^{22_{\mu}^{\ast}}}{22_{\mu}^{\ast}}
\int_{\Omega}\int_{\Omega}\frac{|v(x)|^{2_{\mu}^{\ast}}|v(y)|^{2_{\mu}^{\ast}}}
{|x-y|^{\mu}}dxdy\right\}\\
&=\frac{N+2-\mu}{4N-2\mu}\left(\frac{|\nabla v|_{2}^{2}-\lambda|v|_{2}^{2}}
{\|v\|_{NL}^{2}}\right)^{\frac{2N-\mu}{N+2-\mu}}\\
&<\frac{N+2-\mu}{4N-2\mu}S_{H,L}^{\frac{2N-\mu}{N+2-\mu}}.\\
\endaligned
$$
By the definition of $c^*$, we know $c^*<\frac{N+2-\mu}{4N-2\mu}S_{H,L}^{\frac{2N-\mu}{N+2-\mu}}$. Let $\{u_{n}\}$ be the  $(PS)$ sequence obtained in Proposition \ref{PSS}. Applying Lemma \ref{ConPro}, we know  $\{u_{n}\}$ contains a convergent subsequence. And so, we have $J_{\lambda}$ has a critical value $c^*\in\big(0, \frac{N+2-\mu}{4N-2\mu}S_{H,L}^{\frac{2N-\mu}{N+2-\mu}}\big)$ and the problem \eqref{CCE} has a nontrivial solution.$\hfill{} \Box$

\section{The case $N\geq4$, $\lambda\geq\lambda_{1}$}
\ \ \ \ We may suppose that $\lambda\in[\lambda_{j}, \lambda_{j+1})$ for some $j\in\mathbb{N}$, where $\lambda_{j}$ is the $j$-th eigenvalue of $-\Delta$ on $\Omega$ with boundary condition $u=0$. $e_{j}$ is the $j$-th eigenfunctions corresponding to the eigenvalue $\lambda_{j}$.

\begin{lem}\label{LK} If $N\geq3$ and $\lambda\in[\lambda_{j}, \lambda_{j+1})$ for some $j\in\mathbb{N}$, then, the functional $J_{\lambda}$ satisfies the following properties:\\
(i) There exist $\alpha,\rho>0$ such that for any $u\in\mathbb{E}_{j+1}$ with $\|u\|=\rho$ it results that $J_{\lambda}(u)\geq\alpha$. \\
(ii) $J_{\lambda}(u)<0$ for any $u\in \mathbb{Y}_{j}$.\\
(iii) Let $\mathbb{F}$ be a finite dimensional subspace of $H_{0}^{1}(\Omega)$. There exists $R>\rho$ such that for any $u\in\mathbb{F}$ with $\|u\|\geq R$ it results that $J_{\lambda}(u)\leq0$.
\end{lem}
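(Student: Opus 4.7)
\textbf{Proof proposal for Lemma \ref{LK}.} The plan is to exploit the spectral decomposition $H_0^1(\Omega) = \mathbb{Y}_j \oplus \mathbb{E}_{j+1}$ together with the Hardy-Littlewood-Sobolev inequality to control the nonlocal term, and to use the equivalence of norms on finite-dimensional subspaces for part (iii). Throughout I rely on the variational characterization of the Dirichlet eigenvalues, which yields $\|u\|^2 \geq \lambda_{j+1}|u|_2^2$ on $\mathbb{E}_{j+1}$ and $\|u\|^2 \leq \lambda_j |u|_2^2$ on $\mathbb{Y}_j$.

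For (i), combining the Hardy-Littlewood-Sobolev inequality (Proposition \ref{HLS}) with the Sobolev embedding $H_0^1(\Omega) \hookrightarrow L^{2^*}(\Omega)$ yields a constant $C>0$ such that $\|u\|_{NL}^{2\cdot 2_\mu^*} \leq C\|u\|^{2\cdot 2_\mu^*}$ for every $u \in H_0^1(\Omega)$. Restricting to $u \in \mathbb{E}_{j+1}$ and setting $\delta := 1 - \lambda/\lambda_{j+1} > 0$ (using $\lambda < \lambda_{j+1}$), one has $\|u\|^2 - \lambda|u|_2^2 \geq \delta\|u\|^2$, so
\[
J_\lambda(u) \geq \tfrac{\delta}{2}\|u\|^2 - \tfrac{C}{2\cdot 2_\mu^*}\|u\|^{2\cdot 2_\mu^*}.
\]
Since $2\cdot 2_\mu^* > 2$, choosing $\rho > 0$ small enough makes the right-hand side at $\|u\|=\rho$ equal to some $\alpha>0$. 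For (ii), any $u \in \mathbb{Y}_j$ satisfies $\|u\|^2 \leq \lambda_j|u|_2^2 \leq \lambda|u|_2^2$ (since $\lambda \geq \lambda_j$), so the quadratic part of $J_\lambda$ is nonpositive; together with the negative sign on the nonlocal term, this gives $J_\lambda(u) \leq 0$ on $\mathbb{Y}_j$, with strict inequality for $u \neq 0$ (the stated inequality clearly intends $u \in \mathbb{Y}_j \setminus \{0\}$, since $J_\lambda(0)=0$).

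For (iii), on any finite-dimensional subspace $\mathbb{F} \subset H_0^1(\Omega)$ all norms are equivalent, and by Lemma \ref{EN} the expression $\|\cdot\|_{NL}$ is a bona fide norm on $L^{2^*}(\Omega) \supset \mathbb{F}$; therefore there exists $c_\mathbb{F}>0$ with $\|u\|_{NL} \geq c_\mathbb{F}\|u\|$ on $\mathbb{F}$. Discarding the nonpositive term $-\tfrac{\lambda}{2}|u|_2^2$ (recall $\lambda>0$) gives
\[
J_\lambda(u) \leq \tfrac{1}{2}\|u\|^2 - \tfrac{c_\mathbb{F}^{\,2\cdot 2_\mu^*}}{2\cdot 2_\mu^*}\|u\|^{2\cdot 2_\mu^*},
\]
and since $2\cdot 2_\mu^* > 2$, the right-hand side tends to $-\infty$ as $\|u\| \to \infty$, so a sufficiently large $R>\rho$ does the job. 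The only real subtlety lies in (iii): the geometric condition must hold on an \emph{arbitrary} finite-dimensional subspace rather than just on $\mathbb{Y}_{j+1}$, so one cannot use a direct eigenvalue comparison and must instead invoke the fact that $\|\cdot\|_{NL}$ is a genuine norm (Lemma \ref{EN}) in order to compare it with the $H_0^1$ norm via finite-dimensional norm equivalence.
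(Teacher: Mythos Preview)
Your proposal is correct and follows essentially the same route as the paper: part (i) via the eigenvalue inequality $\|u\|^2\geq\lambda_{j+1}|u|_2^2$ on $\mathbb{E}_{j+1}$ combined with Hardy--Littlewood--Sobolev and Sobolev; part (ii) via $\|u\|^2\leq\lambda_j|u|_2^2$ on $\mathbb{Y}_j$ (the paper writes this out through the eigenfunction expansion, but it is the same inequality); and part (iii) by dropping the $-\tfrac{\lambda}{2}|u|_2^2$ term and using that $\|\cdot\|_{NL}$ is a norm (Lemma~\ref{EN}) together with norm equivalence on finite-dimensional spaces. Your remark that (ii) should be read for $u\neq0$ is also accurate.
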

\begin{proof} (i) Since $\lambda\in[\lambda_{j}, \lambda_{j+1})$, by  the Sobolev embedding and the Hardy-Littlewood-Sobolev inequality, for all $u\in\mathbb{E}_{j+1}\backslash\ \{0\}$ we have
$$
\aligned
J_{\lambda}(u)&\geq\frac{1}{2}\int_{\Omega}|\nabla u|^{2}dx-\frac{\lambda}{2\lambda_{j+1}}\int_{\Omega}|\nabla u|^{2}dx-\frac{1}{22_{\mu}^{\ast}}C_{0}|u|_{2^{\ast}}^{2(\frac{2N-\mu}{N-2})}\\
&\geq\frac{1}{2}(1-\frac{\lambda}{\lambda_{j+1}})\|u\|^{2}-\frac{1}{22_{\mu}^{\ast}}C_{1}\|u\|^{2(\frac{2N-\mu}{N-2})}.\\
\endaligned
$$
Since $2<2(\frac{2N-\mu}{N-2})$, we can choose some $\alpha,\rho>0$ such that $J_{\lambda}(u)\geq\alpha$ for $u\in\mathbb{E}_{j+1}$ with $\|u\|=\rho$.

(ii) Let $u\in \mathbb{Y}_{j}$, that is, $u=\sum_{i=1}^{j}l_{i}e_{i},$
where $l_{i}\in \mathbb{R},i=1,...,j.$ Since $\{e_{i}\}_{i\in\mathbb{N}}$ is an orthonormal basis of $L^{2}(\Omega)$ and $H_{0}^{1}(\Omega)$, we have
$$
\int_{\Omega}u^{2}dx=\sum_{i=1}^{j}l_{i}^{2}\ \ \ \ \mbox{and}\ \ \ \int_{\Omega}|\nabla u|^{2}dx=\sum_{i=1}^{j}l_{i}^{2}|\nabla e_{i}|_{2}^{2}.
$$
Then, we get
$$
\aligned
J_{\lambda}(u)&=\frac{1}{2}\sum_{i=1}^{j}l_{i}^{2}(|\nabla e_{i}|_{2}^{2}-\lambda)-\frac{1}{22_{\mu}^{\ast}}\int_{\Omega}\int_{\Omega}\frac{|u(x)|^{2_{\mu}^{\ast}}|u(y)|^{2_{\mu}^{\ast}}}
{|x-y|^{\mu}}dxdy\\
&< \frac{1}{2}\sum_{i=1}^{j}l_{i}^{2}(\lambda_{i}-\lambda)\\
&\leq 0,\\
\endaligned
$$
thanks to $\lambda_{i}\leq\lambda_{j}\leq\lambda$.

(iii) For $u\in \mathbb{F}\backslash\ \{0\}$, by the non-negativity of $\lambda$ gives
$$\aligned
J_{\lambda}(u)&=\frac{1}{2}\|u\|^{2}-\frac{\lambda }{2}|u|_{2}^{2}-\frac{1}{22_{\mu}^{\ast}}
\|u\|_{NL}^{22_{\mu}^{\ast}}\\
&\leq\frac{1}{2}\|u\|^{2}-\frac{1}{22_{\mu}^{\ast}}\|u\|_{NL}^{22_{\mu}^{\ast}}\\
&\leq\frac{1}{2}\|u\|^{2}-\frac{C_{1}}{22_{\mu}^{\ast}}\|u\|^{22_{\mu}^{\ast}}
\endaligned
$$
for some positive constant $C_{1}>0$, since all norms on finite dimensional space are equivalent. So,
$
J_{\lambda}(u)\rightarrow-\infty
$
as $\|u\|\rightarrow+\infty$. Hence, there exists $R>\rho$ such that for any $u\in\mathbb{F}$ with $\|u\|\geq R$ it results that $J_{\lambda}(u)\leq0$ and (iii) follows.
\end{proof}

From Lemma \ref{Element}, if $N\geq4$ and $\lambda>0$, then for $\vr$ small enough,
$$
\frac{|\nabla u_{\varepsilon}|_{2}^{2}-\lambda|u_{\varepsilon}|_{2}^{2}}
{\|u_{\varepsilon}\|_{NL}^{2}}<S_{H,L}.
$$
For any $j\in\mathbb{N}$, we define the linear space
$$
\mathbb{G}_{j,\varepsilon}:=\mbox{span}\{e_{1},...,e_{j}, u_{\varepsilon}\}
$$
and set
$$
m_{j,\varepsilon}:=\max_{u\in\mathbb{G}_{j,\varepsilon}, \|u\|_{NL}=1}\left(\int_{\Omega}|\nabla u|^{2}dx-\lambda\int_{\Omega}|u|^{2}dx\right),
$$
where $\|\cdot\|_{NL}$ is defined in Lemma \ref{EN}.

\begin{lem} \label{LE1} If $N\geq4$ and $\lambda\in[\lambda_{j}, \lambda_{j+1})$ for some $j\in\mathbb{N}$, then,\\
(i) $m_{j,\varepsilon}$ is achieved at some $u_{m}\in \mathbb{G}_{j,\varepsilon}$ and $u_{m}$ can be written as follows
$$
u_{m}=v+tu_{\varepsilon}
$$
with $v\in\mathbb{Y}_{j}$ and $t\geq0$. \\
(ii) The following estimate holds true
\begin{equation}\label{MAX}
m_{j,\varepsilon}\leq\left\{\begin{array}{l}
\displaystyle (\lambda_{j}-\lambda)|v|_{2}^{2} \hspace{10.14mm} \mbox{if}\hspace{2.14mm} t=0,\\
\displaystyle (\lambda_{j}-\lambda)|v|_{2}^{2}+A_{\varepsilon}\left(1+|v|_{2}O(\varepsilon^{\frac{N-2}{2}})\right)
+O(\varepsilon^{\frac{N-2}{2}})|v|_{2} \hspace{3.14mm} \mbox{if} \hspace{2.14mm}t>0,\\
\end{array}
\right.
\end{equation}
as $\varepsilon\rightarrow0$, where $v$ is given in (i), $u_{\varepsilon}$ is given in Section 3 and
\begin{equation}\label{AE}
A_{\varepsilon}=\frac{|\nabla u_{\varepsilon}|_{2}^{2}-\lambda|u_{\varepsilon}|_{2}^{2}}
{\|u_{\varepsilon}\|_{NL}^{2}}.
\end{equation}
\end{lem}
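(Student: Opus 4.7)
\emph{Part (i): existence and sign of $t$.} Since $\mathbb{G}_{j,\varepsilon}$ is finite-dimensional, all norms on it are equivalent and the set $\{u\in\mathbb{G}_{j,\varepsilon}:\|u\|_{NL}=1\}$ is compact. The continuous quadratic functional $Q(u):=|\nabla u|_{2}^{2}-\lambda|u|_{2}^{2}$ therefore attains its maximum at some $u_{m}$. Every element decomposes uniquely as $u_{m}=v+tu_{\varepsilon}$ with $v\in\mathbb{Y}_{j}$ and $t\in\R$; if $t<0$ I replace $u_{m}$ by $-u_{m}=(-v)+(-t)u_{\varepsilon}$, which again belongs to $\mathbb{G}_{j,\varepsilon}$, preserves both the value of $Q$ (since $Q$ is even) and the norm $\|\cdot\|_{NL}$ (it is also even), and has a positive coefficient in front of $u_{\varepsilon}$.

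\emph{Part (ii), case $t=0$.} Writing $v=\sum_{i=1}^{j}l_{i}e_{i}$, the orthonormality of $\{e_{i}\}$ in $L^{2}(\Omega)$ and the eigenvalue relation $-\Delta e_{i}=\lambda_{i}e_{i}$ yield
$$Q(v)=\sum_{i=1}^{j}l_{i}^{2}(\lambda_{i}-\lambda)\le(\lambda_{j}-\lambda)\sum_{i=1}^{j}l_{i}^{2}=(\lambda_{j}-\lambda)|v|_{2}^{2},$$
where I use $\lambda_{i}\le\lambda_{j}\le\lambda$, so $\lambda_{i}-\lambda\le\lambda_{j}-\lambda\le 0$.

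\emph{Part (ii), case $t>0$.} I expand $Q$ bilinearly,
$$Q(u_{m})=Q(v)+2tB(v,u_{\varepsilon})+t^{2}Q(u_{\varepsilon}),\qquad B(v,w):=\int_{\Omega}\nabla v\nabla w\,dx-\lambda\int_{\Omega}vw\,dx,$$
and estimate each piece. The first is bounded by $(\lambda_{j}-\lambda)|v|_{2}^{2}$ exactly as above. For the cross term, integration by parts together with $-\Delta v=\sum l_{i}\lambda_{i}e_{i}$ gives $B(v,u_{\varepsilon})=\sum_{i=1}^{j}l_{i}(\lambda_{i}-\lambda)\int_{\Omega}e_{i}u_{\varepsilon}\,dx$. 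A direct change of variables $y=x/\varepsilon$ using the cutoff $\psi$ and the explicit form of $U$ shows $\int_{\Omega}u_{\varepsilon}\,dx=O(\varepsilon^{(N-2)/2})$ for every $N\ge 3$; combined with $|e_{i}|_{\infty}<\infty$ and the equivalence of norms on $\mathbb{Y}_{j}$ (so that $\sum|l_{i}|\le C|v|_{2}$), this produces $|B(v,u_{\varepsilon})|\le O(\varepsilon^{(N-2)/2})|v|_{2}$. The third term equals $t^{2}A_{\varepsilon}\|u_{\varepsilon}\|_{NL}^{2}$ by the definition \eqref{AE}. It remains to convert the constraint $\|u_{m}\|_{NL}=1$ into the bound $t^{2}\|u_{\varepsilon}\|_{NL}^{2}\le 1+|v|_{2}O(\varepsilon^{(N-2)/2})$: I expand $\|v+tu_{\varepsilon}\|_{NL}^{2\cdot 2_{\mu}^{\ast}}$ as a double Riesz-type integral, use that $u_{\varepsilon}$ concentrates at $0$ (where $v$ is essentially constant and pointwise bounded by $C|v|_{2}$) and invoke the pointwise inequality $(a+b)^{2_{\mu}^{\ast}}\ge a^{2_{\mu}^{\ast}}+2_{\mu}^{\ast}a^{2_{\mu}^{\ast}-1}b$ for $a\ge 0$ to isolate the leading contribution $t^{2\cdot 2_{\mu}^{\ast}}\|u_{\varepsilon}\|_{NL}^{2\cdot 2_{\mu}^{\ast}}$; taking $2_{\mu}^{\ast}$-th roots and assembling the three estimates gives \eqref{MAX}.

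\textbf{Main obstacle.} The technical heart is this last step. Unlike the $L^{2^{\ast}}$-norm, the nonlocal norm $\|\cdot\|_{NL}$ does not admit a clean Brezis--Lieb-type splitting, so one must carefully isolate the leading contribution $t^{2\cdot 2_{\mu}^{\ast}}\|u_{\varepsilon}\|_{NL}^{2\cdot 2_{\mu}^{\ast}}$ from the Choquard-type double integral while controlling the Hardy--Littlewood--Sobolev cross terms by $|v|_{2}O(\varepsilon^{(N-2)/2})$. The concentration of $u_{\varepsilon}$ near the origin, played off against the smoothness and the $L^{\infty}$-boundedness of $v\in\mathbb{Y}_{j}$, is precisely what makes this quantitative refinement feasible.
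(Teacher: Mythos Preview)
Your proposal is correct and follows essentially the same route as the paper: compactness on the finite-dimensional sphere for (i), the spectral inequality on $\mathbb{Y}_{j}$ for the case $t=0$, and for $t>0$ the bilinear expansion of $Q$ together with $|u_{\varepsilon}|_{1}=O(\varepsilon^{(N-2)/2})$ for the cross term and a convexity lower bound on the double Riesz integral to extract $t^{2}\|u_{\varepsilon}\|_{NL}^{2}\le \big(1+|v|_{2}O(\varepsilon^{(N-2)/2})\big)^{(N-2)/(2N-\mu)}$ from the constraint. Two points the paper makes explicit that you should not skip when writing this out: first, the convexity expansion produces the mixed-power integral $\int_{\Omega}\int_{\Omega}\frac{|u_{\varepsilon}(x)|^{2_{\mu}^{\ast}}|u_{\varepsilon}(y)|^{2_{\mu}^{\ast}-1}}{|x-y|^{\mu}}\,dx\,dy$, and it is this quantity (not simply $|u_{\varepsilon}|_{1}$) that must be shown to be $O(\varepsilon^{(N-2)/2})$ via scaling and Hardy--Littlewood--Sobolev; second, before absorbing the factor of $t$ in the cross term $2tB(v,u_{\varepsilon})$ into the $O(\varepsilon^{(N-2)/2})$, you need a uniform bound $t\le C$, which the paper reads off from the same constraint expansion.
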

\begin{proof}
(i) Since $\mathbb{G}_{j,\varepsilon}$ is a finite dimensional space, then $m_{\varepsilon}$ is achieved at some $u_{m}\in\mathbb{G}_{j,\varepsilon}$, that is,
$$
m_{j,\varepsilon}=|\nabla u_{m}|_{2}^{2}-\lambda|u_{m}|_{2}^{2}
\ \ \ \mbox{and}\ \ \ \|u_{m}\|_{NL}=1.
$$
Obviously, $u_{m}\not\equiv0$. From the definition of $\mathbb{G}_{j,\varepsilon}$ we have that
$$
u_{m}=v+tu_{\varepsilon}
$$
for some $v\in\mathbb{Y}_{j}$ and $t\in  \R$. We can suppose that $t\geq0$, otherwise, if $t<0$ we can replace $u_{m}$ with $-u_{m}$. The result follows.

(ii)
If $t=0$, then $u_{m}=v\in\mathbb{Y}_{j}$ and
$$
m_{j,\varepsilon}=|\nabla u_{m}|_{2}^{2}-\lambda|u_{m}|_{2}^{2}=|\nabla v|_{2}^{2}-\lambda|v|_{2}^{2}\leq (\lambda_{j}-\lambda)|v|_{2}^{2}.
$$

We consider the case $t>0$. Since $e_{1},...,e_{j}\in L^{\infty}(\Omega)$, we also have $v\in L^{\infty}(\Omega)$. By a direct computation, we have
$$
\aligned
\int_{B_{2\delta}}\int_{B_{2\delta}}&\frac{|u_{\varepsilon}(x)|^{2_{\mu}^{\ast}}
|u_{\varepsilon}(y)|^{2_{\mu}^{\ast}-1}}
{|x-y|^{\mu}}dxdy\\
&=\int_{B_{2\delta}}\int_{B_{2\delta}}\frac{|U_{\varepsilon}(x)|^{2_{\mu}^{\ast}}
|U_{\varepsilon}(y)|^{2_{\mu}^{\ast}-1}}{|x-y|^{\mu}}dxdy\\
&=\varepsilon^{\frac{2\mu-3N-2}{2}}[N(N-2)]^{\frac{3N-2\mu+2}{4}}
\int_{B_{2\delta}}\int_{B_{2\delta}}\frac{1}
{(1+|\frac{x}{\varepsilon}|^{2})^{\frac{2N-\mu}{2}}|x-y|^{\mu}(1+|\frac{y}
{\varepsilon}|^{2})^{\frac{N-\mu+2}{2}}}dxdy\\
&=\varepsilon^{\frac{2\mu-3N-2}{2}}[N(N-2)]^{\frac{3N-2\mu+2}{4}}\varepsilon^{2N-\mu}
\int_{B_{\frac{2\delta}{\varepsilon}}}\int_{B_{\frac{2\delta}{\varepsilon}}}\frac{1}
{(1+|x|^{2})^{\frac{2N-\mu}{2}}|x-y|^{\mu}
(1+|y|^{2})^{\frac{N-\mu+2}{2}}}dxdy\\
&\leq O(\varepsilon^{\frac{N-2}{2}})\int_{\R^{N}}\int_{\R^{N}}\frac{1}
{(1+|x|^{2})^{\frac{2N-\mu}{2}}|x-y|^{\mu}
(1+|y|^{2})^{\frac{N-\mu+2}{2}}}dxdy.\\
\endaligned
$$
If $\mu>1$, by Hardy-Littlewood-Sobolev inequality, we have
$$
\aligned
\int_{B_{2\delta}}\int_{B_{2\delta}}&\frac{|u_{\varepsilon}(x)|^{2_{\mu}^{\ast}}
|u_{\varepsilon}(y)|^{2_{\mu}^{\ast}-1}}
{|x-y|^{\mu}}dxdy\\
&\leq O(\varepsilon^{\frac{N-2}{2}})\left(\int_{\R^{N}}\left(\frac{1}
{(1+|x|^{2})^{\frac{2N-\mu}{2}}}\right)^{\frac{N}{N-1}}dx\right)^{\frac{N-1}{N}}
\left(\int_{\R^{N}}\left(\frac{1}
{(1+|x|^{2})^{\frac{N-\mu+2}{2}}}\right)^{\frac{N}{N-\mu+1}}dx\right)^{\frac{N-\mu+1}{N}}\\
&=O(\varepsilon^{\frac{N-2}{2}}).
\endaligned
$$
If $\mu\leq1$, by Hardy-Littlewood-Sobolev inequality again, we have
$$
\aligned
\int_{B_{2\delta}}\int_{B_{2\delta}}&\frac{|u_{\varepsilon}(x)|^{2_{\mu}^{\ast}}
|u_{\varepsilon}(y)|^{2_{\mu}^{\ast}-1}}
{|x-y|^{\mu}}dxdy\\
&\leq O(\varepsilon^{\frac{N-2}{2}})\left(\int_{\R^{N}}\left(\frac{1}
{(1+|x|^{2})^{\frac{2N-\mu}{2}}}\right)^{\frac{2N}{2N-\mu}}dx\right)^{\frac{2N-\mu}{2N}}
\left(\int_{\R^{N}}\left(\frac{1}
{(1+|x|^{2})^{\frac{N-\mu+2}{2}}}\right)^{\frac{2N}{2N-\mu}}dx\right)^{\frac{2N-\mu}{2N}}\\
&=O(\varepsilon^{\frac{N-2}{2}}).
\endaligned
$$
Thus, we obtain
$$
\int_{\Omega}\int_{\Omega}\frac{|u_{\varepsilon}(x)|^{2_{\mu}^{\ast}}
|u_{\varepsilon}(y)|^{2_{\mu}^{\ast}-1}}
{|x-y|^{\mu}}dxdy\leq O(\varepsilon^{\frac{N-2}{2}}).
$$
On the other hand, by a direct computation, we have
$$
\aligned
\int_{B_{\delta}}\int_{B_{\delta}}&\frac{|u_{\varepsilon}(x)|^{2_{\mu}^{\ast}}
|u_{\varepsilon}(y)|^{2_{\mu}^{\ast}-1}}
{|x-y|^{\mu}}dxdy\\
&=\int_{B_{\delta}}\int_{B_{\delta}}\frac{|U_{\varepsilon}(x)|^{2_{\mu}^{\ast}}
|U_{\varepsilon}(y)|^{2_{\mu}^{\ast}-1}}{|x-y|^{\mu}}dxdy\\
&=\varepsilon^{\frac{2\mu-3N-2}{2}}[N(N-2)]^{\frac{3N-2\mu+2}{4}}
\int_{B_{\delta}}\int_{B_{\delta}}\frac{1}
{(1+|\frac{x}{\varepsilon}|^{2})^{\frac{2N-\mu}{2}}|x-y|^{\mu}(1+|\frac{y}
{\varepsilon}|^{2})^{\frac{N-\mu+2}{2}}}dxdy\\
&=\varepsilon^{\frac{2\mu-3N-2}{2}}[N(N-2)]^{\frac{3N-2\mu+2}{4}}\varepsilon^{2N-\mu}
\int_{B_{\frac{\delta}{\varepsilon}}}\int_{B_{\frac{\delta}{\varepsilon}}}\frac{1}
{(1+|x|^{2})^{\frac{2N-\mu}{2}}|x-y|^{\mu}
(1+|y|^{2})^{\frac{N-\mu+2}{2}}}dxdy\\
&\geq O(\varepsilon^{\frac{N-2}{2}})\int_{B_{\delta}}\int_{B_{\delta}}\frac{1}
{(1+|x|^{2})^{\frac{2N-\mu}{2}}|x-y|^{\mu}
(1+|y|^{2})^{\frac{N-\mu+2}{2}}}dxdy\\
&=O(\varepsilon^{\frac{N-2}{2}})
\endaligned
$$
provided $\varepsilon<1$ and so
$$
\int_{\Omega}\int_{\Omega}\frac{|u_{\varepsilon}(x)|^{2_{\mu}^{\ast}}
|u_{\varepsilon}(y)|^{2_{\mu}^{\ast}-1}}
{|x-y|^{\mu}}dxdy\geq O(\varepsilon^{\frac{N-2}{2}}).
$$
Then we can get
$$
\int_{\Omega}\int_{\Omega}\frac{|u_{\varepsilon}(x)|^{2_{\mu}^{\ast}}
|u_{\varepsilon}(y)|^{2_{\mu}^{\ast}-1}}
{|x-y|^{\mu}}dxdy= O(\varepsilon^{\frac{N-2}{2}}).
$$

By convexity, we obtain
\begin{equation}\label{ESLC2}
\aligned
1&=\int_{\Omega}\int_{\Omega}\frac{|u_{m}(x)|^{2_{\mu}^{\ast}}|u_{m}(y)|^{2_{\mu}^{\ast}}}
{|x-y|^{\mu}}dxdy\\
&=\int_{\Omega}\int_{\Omega}\frac{|v(x)+tu_{\varepsilon}(x)|^{2_{\mu}^{\ast}}|v(y)+tu_{\varepsilon}(y)|^{2_{\mu}^{\ast}}}
{|x-y|^{\mu}}dxdy\\
&\geq\int_{\Omega}\int_{\Omega}\frac{|tu_{\varepsilon}(x)|^{2_{\mu}^{\ast}}|tu_{\varepsilon}(y)|^{2_{\mu}^{\ast}}}
{|x-y|^{\mu}}dxdy+22_{\mu}^{\ast}\int_{\Omega}\int_{\Omega}\frac{|tu_{\varepsilon}(x)|^{2_{\mu}^{\ast}-1}v(x)|tu_{\varepsilon}(y)|^{2_{\mu}^{\ast}}}
{|x-y|^{\mu}}dxdy\\
&\hspace{0.5cm}+{2_{\mu}^{\ast}}^{2}\int_{\Omega}\int_{\Omega}\frac{|tu_{\varepsilon}(x)|^{2_{\mu}^{\ast}-1}v(x)|tu_{\varepsilon}(y)|^{2_{\mu}^{\ast}-1}v(y)}
{|x-y|^{\mu}}dxdy\\
&\geq\int_{\Omega}\int_{\Omega}\frac{|tu_{\varepsilon}(x)|^{2_{\mu}^{\ast}}|tu_{\varepsilon}(y)|^{2_{\mu}^{\ast}}}
{|x-y|^{\mu}}dxdy+22_{\mu}^{\ast}\int_{\Omega}\int_{\Omega}\frac{|tu_{\varepsilon}(x)|^{2_{\mu}^{\ast}-1}v(x)|tu_{\varepsilon}(y)|^{2_{\mu}^{\ast}}}
{|x-y|^{\mu}}dxdy\\
%&\geq \int_{\Omega}\int_{\Omega}\frac{|tu_{\varepsilon}(x)|^{2_{\mu}^{\ast}}|tu_{\varepsilon}(y)|^{2_{\mu}^{\ast}}}{|x-y|^{\mu}}dxdy-22_{\mu}^{\ast}|v|_{\infty}\int_{\Omega}\int_{\Omega}\frac{|tu_{\varepsilon}(x)|^{2_{\mu}^{\ast}-1}|tu_{\varepsilon}(y)|^{2_{\mu}^{\ast}}}{|x-y|^{\mu}}dxdy\\
&\geq t^{22_{\mu}^{\ast}}\int_{\Omega}\int_{\Omega}\frac{|u_{\varepsilon}(x)|^{2_{\mu}^{\ast}}|u_{\varepsilon}(y)|^{2_{\mu}^{\ast}}}
{|x-y|^{\mu}}dxdy-22_{\mu}^{\ast}t^{22_{\mu}^{\ast}-1}|v|_{\infty}\int_{\Omega}\int_{\Omega}\frac{|u_{\varepsilon}(x)|^{2_{\mu}^{\ast}-1}
|u_{\varepsilon}(y)|^{2_{\mu}^{\ast}}}
{|x-y|^{\mu}}dxdy\\
&\geq t^{22_{\mu}^{\ast}}\int_{\Omega}\int_{\Omega}\frac{|u_{\varepsilon}(x)|^{2_{\mu}^{\ast}}|u_{\varepsilon}(y)|^{2_{\mu}^{\ast}}}
{|x-y|^{\mu}}dxdy-C_{2}t^{22_{\mu}^{\ast}-1}|v|_{2}O(\varepsilon^{\frac{N-2}{2}}),\\
\endaligned
\end{equation}
where we used the fact that  $\mathbb{Y}_{j}$ is a finite dimensional space and all norms on  $\mathbb{Y}_{j}$ are equivalent. This implies that $t<C_{3}$  for some constant $C_{3}>0$. Taking \eqref{ESLC2} into account, we have
$$
\int_{\Omega}\int_{\Omega}\frac{|tu_{\varepsilon}(x)|^{2_{\mu}^{\ast}}|tu_{\varepsilon}(y)|^{2_{\mu}^{\ast}}}
{|x-y|^{\mu}}dxdy\leq 1+O(\varepsilon^{\frac{N-2}{2}})|v|_{2}.
$$
By \eqref{AE}, one can see that
$$\aligned
m_{j,\varepsilon}&=\int_{\Omega}|\nabla (v+tu_{\varepsilon})|^{2}dx-\lambda\int_{\Omega}|v+tu_{\varepsilon}|^{2}dx\\
&\leq (\lambda_{j}-\lambda)|v|_{2}^{2}+A_{\varepsilon}\left(\int_{\Omega}\int_{\Omega}\frac{|tu_{\varepsilon}(x)|^{2_{\mu}^{\ast}}|tu_{\varepsilon}(y)|^{2_{\mu}^{\ast}}}
{|x-y|^{\mu}}dxdy\right)^{\frac{N-2}{2N-\mu}}+C_{4}|u_{\varepsilon}|_{1}|v|_{2}\\
&\leq (\lambda_{j}-\lambda)|v|_{2}^{2}+A_{\varepsilon}\left(1+|v|_{2}O(\varepsilon^{\frac{N-2}{2}})\right)^{\frac{N-2}{2N-\mu}}
+C_{4}|u_{\varepsilon}|_{1}|v|_{2}\\
&\leq (\lambda_{j}-\lambda)|v|_{2}^{2}+A_{\varepsilon}\left(1+|v|_{2}O(\varepsilon^{\frac{N-2}{2}})\right)
+O(\varepsilon^{\frac{N-2}{2}})|v|_{2},
\endaligned
$$
where we had used the estimate in Lemma 2.25 of \cite{Wi} that
$
|u_{\varepsilon}|_{1}=O(\varepsilon^{\frac{N-2}{2}}).
$
\end{proof}

\begin{lem} \label{LE}
If $N\geq4$ and $\lambda\in(\lambda_{j}, \lambda_{j+1})$ for some $j\in\mathbb{N}$, then,
$$
\frac{|\nabla u|_{2}^{2}-\lambda|u|_{2}^{2}}
{\|u\|_{NL}^{2}}<S_{H,L}
$$
for any $u\in\mathbb{G}_{j,\varepsilon}$.
\end{lem}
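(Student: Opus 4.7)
The plan is to reduce the statement, via the scale invariance of the quotient $(|\nabla u|_{2}^{2}-\lambda|u|_{2}^{2})/\|u\|_{NL}^{2}$ under $u\mapsto cu$, to proving $m_{j,\varepsilon}<S_{H,L}$ for $\varepsilon$ small, and then to combine Lemma \ref{LE1} with the refined asymptotics of $A_{\varepsilon}$ already obtained inside the proof of Lemma \ref{Element}.

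By Lemma \ref{LE1}(i), the maximum $m_{j,\varepsilon}$ is attained at some $u_{m}=v+tu_{\varepsilon}$ with $v\in\mathbb{Y}_{j}$ and $t\geq 0$, so I split into two cases. If $t=0$, then $u_{m}=v$ is a nonzero element of $\mathbb{Y}_{j}$, and Lemma \ref{LE1}(ii) gives $m_{j,\varepsilon}\leq(\lambda_{j}-\lambda)|v|_{2}^{2}<0<S_{H,L}$, the strict inequality coming from the strict hypothesis $\lambda>\lambda_{j}$.

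If $t>0$, set $\sigma:=\lambda-\lambda_{j}>0$. Since $A_{\varepsilon}\to S_{H,L}$ as $\varepsilon\to 0$ by Lemma \ref{Element}, it is in particular bounded independently of $\varepsilon$, so the two cross terms in Lemma \ref{LE1}(ii) can be collected into a single expression
$$m_{j,\varepsilon}\leq A_{\varepsilon}-\sigma|v|_{2}^{2}+C\,\varepsilon^{\frac{N-2}{2}}|v|_{2}$$
for some $C$ independent of $\varepsilon$ and $v$. Treating the right-hand side as a concave quadratic in $|v|_{2}\geq 0$ and completing the square yields
$$m_{j,\varepsilon}\leq A_{\varepsilon}+\frac{C^{2}}{4\sigma}\,\varepsilon^{N-2}=A_{\varepsilon}+O(\varepsilon^{N-2}).$$

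To conclude, I would plug in the sharp estimates of $A_{\varepsilon}$ from Lemma \ref{Element}: for $N=4$ one has $A_{\varepsilon}\leq S_{H,L}-\lambda d\,\varepsilon^{2}|\ln\varepsilon|+O(\varepsilon^{2})$, and since the logarithmic gain dominates the $O(\varepsilon^{N-2})=O(\varepsilon^{2})$ correction, one gets $m_{j,\varepsilon}<S_{H,L}$ for $\varepsilon$ small; for $N\geq 5$, $A_{\varepsilon}\leq S_{H,L}-\lambda d\,\varepsilon^{2}+O(\varepsilon^{N/2})$ and $\varepsilon^{N-2}=o(\varepsilon^{2})$ because $N-2\geq 3$, so the negative $\varepsilon^{2}$ gain survives and again $m_{j,\varepsilon}<S_{H,L}$ for $\varepsilon$ small. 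The only delicate point is precisely the bookkeeping of remainder orders, which is why the quadratic-completion step in $|v|_{2}$ is essential: it prevents the $v$-linear cross terms from overwhelming the $\varepsilon^{2}|\ln\varepsilon|$ or $\varepsilon^{2}$ gain of $A_{\varepsilon}$ against $S_{H,L}$.
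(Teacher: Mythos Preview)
Your argument is correct and follows essentially the same route as the paper: both reduce to $m_{j,\varepsilon}<S_{H,L}$, split according to whether $t=0$ or $t>0$, and in the nontrivial case use the quadratic completion in $|v|_{2}$ (the paper writes this as $(\lambda_{j}-\lambda)|v|_{2}^{2}+O(\varepsilon^{(N-2)/2})|v|_{2}\le O(\varepsilon^{N-2})$) together with the asymptotics of $A_{\varepsilon}$ from Lemma~\ref{Element}. Your presentation is in fact slightly more streamlined, since you bound $A_{\varepsilon}$ uniformly first and complete the square before expanding $A_{\varepsilon}$, whereas the paper expands $A_{\varepsilon}$ line by line and only afterwards invokes the quadratic estimate; the substance is the same.
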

\begin{proof}
We only need to check that
$$
m_{j,\varepsilon}=\max_{u\in\mathbb{G}_{j,\varepsilon}, \|u\|_{NL}=1}\left(\int_{\Omega}|\nabla u|^{2}dx-\lambda\int_{\Omega}|u|^{2}dx\right)<S_{H,L}.
$$
 If $t=0$ in \eqref{MAX}, by the choice of $\lambda\in(\lambda_{j}, \lambda_{j+1})$, we get that
$$
m_{j,\varepsilon}\leq (\lambda_{j}-\lambda)|v|_{2}^{2}<0<S_{H,L}.
$$
Now we suppose that $t>0$ and discuss the cases $N\geq5$ and $N=4$ separately.

If $N\geq5$, we have
$$\aligned
m_{j,\varepsilon}&\leq(\lambda_{j}-\lambda)|v|_{2}^{2}+\frac{|\nabla u_{\varepsilon}|_{2}^{2}-\lambda|u_{\varepsilon}|_{2}^{2}}
{\|u_{\varepsilon}\|_{NL}^{2}}\left(1+|v|_{2}O(\varepsilon^{\frac{N-2}{2}})\right)
+O(\varepsilon^{\frac{N-2}{2}})|v|_{2}\\
&\leq(\lambda_{j}-\lambda)|v|_{2}^{2}+\frac{C(N,\mu)^{\frac{N-2}{2N-\mu}\cdot\frac{N}{2}}S_{H,L}^{\frac{N}{2}}-\lambda d\varepsilon^{2}+O(\varepsilon^{N-2})}
{\left(C(N,\mu)^{\frac{N}{2}}S_{H,L}^{\frac{2N-\mu}{2}}-O(\varepsilon^{N-\frac{\mu}{2}})\right)^{\frac{N-2}{2N-\mu}}}\left(1+|v|_{2}O(\varepsilon^{\frac{N-2}{2}})\right)
+O(\varepsilon^{\frac{N-2}{2}})|v|_{2}\\
&\leq\left(S_{H,L}-\frac{\lambda d\varepsilon^{2}}{\left(C(N,\mu)^{\frac{N}{2}}S_{H,L}^{\frac{2N-\mu}{2}}-O(\varepsilon^{N-\frac{\mu}{2}})\right)^{\frac{N-2}{2N-\mu}}}+O(\varepsilon^{\frac{N}{2}})\right)
\left(1+|v|_{2}O(\varepsilon^{\frac{N-2}{2}})\right)+(\lambda_{j}-\lambda)|v|_{2}^{2}+O(\varepsilon^{\frac{N-2}{2}})|v|_{2}\\
&\leq S_{H,L}-\frac{\lambda d\varepsilon^{2}}{\left(C(N,\mu)^{\frac{N}{2}}S_{H,L}^{\frac{2N-\mu}{2}}
-O(\varepsilon^{N-\frac{\mu}{2}})\right)^{\frac{N-2}{2N-\mu}}}+O(\varepsilon^{\frac{N}{2}})+(\lambda_{j}-\lambda)|v|_{2}^{2}+O(\varepsilon^{\frac{N-2}{2}})|v|_{2}
\endaligned
$$
for $\varepsilon>0$ sufficiently small. Since $\lambda\in(\lambda_{j}, \lambda_{j+1})$, we know
\begin{equation}\label{EAK}
(\lambda_{j}-\lambda)|v|_{2}^{2}+O(\varepsilon^{\frac{N-2}{2}})|v|_{2}
\leq\frac{1}{4(\lambda_{j}-\lambda)}O(\varepsilon^{N-2})=O(\varepsilon^{N-2}),
\end{equation}
therefore
$$
m_{j,\varepsilon}\leq S_{H,L}-\lambda d\varepsilon^{2}+O(\varepsilon^{\frac{N}{2}})<S_{H,L}
$$
for $\varepsilon>0$ sufficiently small.

If $N=4$,  by \eqref{EAK}, we have
$$\aligned
m_{j,\varepsilon}&\leq(\lambda_{j}-\lambda)|v|_{2}^{2}+\frac{|\nabla u_{\varepsilon}|_{2}^{2}-\lambda|u_{\varepsilon}|_{2}^{2}}
{\|u_{\varepsilon}\|_{NL}^{2}}\left(1+|v|_{2}O(\varepsilon)\right)
+O(\varepsilon)|v|_{2}\\
&\leq(\lambda_{j}-\lambda)|v|_{2}^{2}+\frac{C(4,\mu)^{\frac{4}{8-\mu}}S_{H,L}^{2}-\lambda d\varepsilon^{2}|ln\varepsilon| +O(\varepsilon^{2})}{\left(C(4,\mu)^{2}S_{H,L}^{\frac{8-\mu}{2}}-O(\varepsilon^{4-\frac{\mu}{2}})\right)^{\frac{2}{8-\mu}}}\left(1+|v|_{2}O(\varepsilon)\right)
+O(\varepsilon)|v|_{2}\\
&\leq\left(S_{H,L}-\frac{\lambda d\varepsilon^{2}|ln\varepsilon|}{\left(C(4,\mu)^{2}S_{H,L}^{\frac{8-\mu}{2}}-O(\varepsilon^{4-\frac{\mu}{2}})\right)^{\frac{2}{8-\mu}}}
+O(\varepsilon^{2})\right)\left(1+|v|_{2}O(\varepsilon)\right)+(\lambda_{j}-\lambda)|v|_{2}^{2}+O(\varepsilon)|v|_{2}\\
&\leq S_{H,L}-\frac{\lambda d\varepsilon^{2}|ln\varepsilon|}{\left(C(4,\mu)^{2}S_{H,L}^{\frac{8-\mu}{2}}-O(\varepsilon^{4-\frac{\mu}{2}})\right)^{\frac{2}{8-\mu}}}
+O(\varepsilon^{2})+(\lambda_{j}-\lambda)|v|_{2}^{2}+O(\varepsilon)|v|_{2}\\
&\leq S_{H,L}-\lambda d\varepsilon^{2}|ln\varepsilon|+O(\varepsilon^{2})\\
&<S_{H,L}
\endaligned
$$
for $\varepsilon>0$ sufficiently small. The result follows.
\end{proof}

\textbf{Proof of Theorem \ref{EXS}  $N\geq4$, $\lambda>\lambda_{1}$.} From the definition of $\mathbb{G}_{j,\varepsilon}$ we know
$$
u_{m}=\overline{v}+tz_{\varepsilon},
$$
where
$$
\overline{v}=v+t\sum_{i=1}^{j}\left(\int_{\Omega}u_{\varepsilon}e_{i}dx\right)e_{i}\in\mathbb{Y}_{j}
$$
and
$$
z_{\varepsilon}=u_{\varepsilon}-\sum_{i=1}^{j}\left(\int_{\Omega}u_{\varepsilon}e_{i}dx\right)e_{i},
$$
so that $\overline{v}$ and $z_{\varepsilon}$ are orthogonal in $L^{2}(\Omega)$. This imply that
$$
|u_{m}|_{2}^{2}=|\overline{v}|_{2}^{2}+t^{2}|z_{\varepsilon}|_{2}^{2}.
$$
Then,
$$
\mathbb{G}_{j,\varepsilon}=\mathbb{Y}_{j}\oplus \mathbb{R}z_{\varepsilon}.
$$
Applying Lemma \ref{LK}, we know that $J_{\lambda}$ satisfies the geometric structure of the Linking Theorem (see [\cite{Rph1}, Theorem 5.3]), that is
$$
\inf_{u\in\mathbb{E}_{j+1},\|u\|=\rho}J_{\lambda}(u)\geq\alpha>0,
$$
$$
\sup_{u\in\mathbb{Y}_{j}}J_{\lambda}(u)<0
$$
and
$$
\sup_{u\in\mathbb{G}_{j,\varepsilon},\|u\|\geq R}J_{\lambda}(u)\leq0.
$$
where $\alpha$ and $R$ are as in Lemma \ref{LK}. Define the Linking critical level of $J_{\lambda}$, i.e.
\begin{equation}\label{LL}
c^{\star}=\inf\limits_{\gamma\in\Gamma}\max\limits_{u\in V}J_{\lambda}(\gamma(u))>0,
\end{equation}
where
$$
\Gamma:=\{\gamma\in C(\overline{V},H_{0}^{1}(\Omega)):\gamma=id \hspace{1.6mm}\mbox{on} \hspace{1.6mm}\partial V\}
$$
and
$$
V:=(\overline{B_{R}}\cap \mathbb{Y}_{j})\oplus\{rz_{\varepsilon}:r\in(0,R)\}.
$$
For any $\gamma\in\Gamma$, we have
$$
c^{\star}\leq\max\limits_{u\in V}J_{\lambda}(\gamma(u))
$$
and in particular, if we take $\gamma=id$ on $\overline{V}$, then
$$
c^{\star}\leq\max\limits_{u\in V}J_{\lambda}(u)\leq\max\limits_{u\in \mathbb{G}_{j,\varepsilon}}J_{\lambda}(u).
$$
Note that for any $u\in H_{0}^{1}(\Omega)\backslash\{0\}$,
$$
\max\limits_{t\geq0}J_{\lambda}(tu)=\frac{N+2-\mu}{4N-2\mu}\left(\frac{|\nabla u|_{2}^{2}-\lambda|u|_{2}^{2}}
{\|u\|_{NL}^{2}}\right)^{\frac{2N-\mu}{N+2-\mu}}.
$$
From $\mathbb{G}_{j,\varepsilon}$ is a linear space we have
$$
\max\limits_{u\in \mathbb{G}_{j,\varepsilon}}J_{\lambda}(u)=\max\limits_{u\in \mathbb{G}_{j,\varepsilon},t\neq0}J_{\lambda}(|t|\frac{u}{|t|})=\max\limits_{u\in \mathbb{G}_{j,\varepsilon},t>0}J_{\lambda}(tu)\leq\max\limits_{u\in \mathbb{G}_{j,\varepsilon},t\geq0}J_{\lambda}(tu).
$$
Thus, by Lemma \ref{LE}, we have
$$\aligned
c^{\star}&\leq\max\limits_{u\in \mathbb{G}_{j,\varepsilon},t\geq0}J_{\lambda}(tu)\\
&=\max\limits_{u\in \mathbb{G}_{j,\varepsilon}}\frac{N+2-\mu}{4N-2\mu}\left(\frac{|\nabla u|_{2}^{2}-\lambda|u|_{2}^{2}}
{\|u\|_{NL}^{2}}\right)^{\frac{2N-\mu}{N+2-\mu}}\\
&<\frac{N+2-\mu}{4N-2\mu}S_{H,L}^{\frac{2N-\mu}{N+2-\mu}}.
\endaligned
$$
Therefore, the Linking Theorem and Lemma \ref{ConPro}  yield that problem \eqref{CCE} admits a nontrivial
solution $u\in H_{0}^{1}(\Omega)$ with critical value $c^{\star}\geq\alpha$. $\hfill{} \Box$

\section{The case $N=3$}
In this Section, we prove Theorem \ref{EXS}  for the case $N=3$ by using the Mountain Pass Theorem and the Linking Theorem. We still denote
$\mathbb{F}$ be a finite dimensional subspace of $H_{0}^{1}(\Omega)$ and
$$
\mathbb{G}_{j,\varepsilon}:=\mbox{span}\{e_{1},...,e_{j}, u_{\varepsilon}\}.
$$
for any $j\in\mathbb{N}$.

\begin{lem} \label {3MPLE}Let $N=3$ and $u_{\varepsilon}$ be as in Section 3. Then, there exists $\lambda_{\ast}$ such that for any $\lambda>\lambda_{\ast}$,
$$
\frac{|\nabla u_{\varepsilon}|_{2}^{2}-\lambda|u_{\varepsilon}|_{2}^{2}}
{\|u_{\varepsilon}\|_{NL}^{2}}<S_{H,L}
$$
provided $\varepsilon>0$ is sufficiently small.
\end{lem}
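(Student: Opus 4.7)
The statement is the $N=3$ analogue of Lemma \ref{Element}, and the strategy mirrors the one used there for $N\geq 4$, but with a crucial twist: for $N=3$ the gradient expansion \eqref{E5} carries an error of the \emph{same order} as the leading behavior of $|u_\varepsilon|_2^2$, so the quotient can only be pushed below $S_{H,L}$ once $\lambda$ is large enough to absorb this error. The plan is to pin down the exact leading asymptotics of all three quantities $|\nabla u_\varepsilon|_2^2$, $|u_\varepsilon|_2^2$, $\|u_\varepsilon\|_{NL}^2$ as $\varepsilon\to 0^+$ and then choose $\lambda_\ast$ as the threshold at which the $O(\varepsilon)$ coefficient in the numerator becomes non-positive.

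First, I would note that the estimates \eqref{E5} and \eqref{E10} are already asserted to hold for $N=3$ in the paragraph immediately preceding Lemma \ref{Element}. So with $N=3$:
$$
|\nabla u_\varepsilon|_2^2 \;\leq\; C(3,\mu)^{\tfrac{3}{2(6-\mu)}}S_{H,L}^{3/2}+C_1\varepsilon,\qquad
\|u_\varepsilon\|_{NL}^2 \;\geq\; \left(C(3,\mu)^{3/2}S_{H,L}^{(6-\mu)/2}-O(\varepsilon^{(6-\mu)/2})\right)^{1/(6-\mu)},
$$
for some explicit constant $C_1>0$ coming from the cutoff $\psi$. The next step, which is the main obstacle, is to prove the sharp lower bound
$$
|u_\varepsilon|_2^2 \;\geq\; d\,\varepsilon + O(\varepsilon^2)
$$
for some $d>0$. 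This is done by restricting the integral to $B_\delta$, substituting $U_\varepsilon(x)=\varepsilon^{-1/2}U(x/\varepsilon)$ and passing to radial coordinates: one gets $|u_\varepsilon|_2^2\geq \varepsilon^2[N(N-2)]^{(N-2)/2}\int_0^{\delta/\varepsilon}\frac{r^{N-1}}{(1+r^2)^{N-2}}\,dr$, which in the case $N=3$ has the integrand $\frac{r^2}{1+r^2}$, so the integral is $\delta/\varepsilon-\arctan(\delta/\varepsilon)=\delta/\varepsilon+O(1)$, giving the required linear behavior $|u_\varepsilon|_2^2\geq d\varepsilon + O(\varepsilon^2)$ with $d=\sqrt{3}\,\delta$ (up to the cutoff correction).

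With these three pieces in hand, I would combine them as in \eqref{E11}. Applying the inequality $(A-h)^{1/(6-\mu)}\geq A^{1/(6-\mu)}-\tfrac{1}{6-\mu}A^{1/(6-\mu)-1}h$ to expand the denominator gives, after multiplying through by $\|u_\varepsilon\|_{NL}^2$,
$$
\frac{|\nabla u_\varepsilon|_2^2-\lambda|u_\varepsilon|_2^2}{\|u_\varepsilon\|_{NL}^2}
\;\leq\; S_{H,L}+\frac{(C_1-\lambda d)\,\varepsilon+O(\varepsilon^{(6-\mu)/2})+O(\varepsilon^2)}{C(3,\mu)^{\tfrac{3}{2(6-\mu)}}S_{H,L}^{1/2}+o(1)}.
$$
Since $\mu<3$ we have $(6-\mu)/2>3/2>1$, so every error term beyond $(C_1-\lambda d)\varepsilon$ is $o(\varepsilon)$. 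Setting $\lambda_\ast:=C_1/d$, the coefficient $C_1-\lambda d$ becomes strictly negative for every $\lambda>\lambda_\ast$, and the whole right-hand side is strictly less than $S_{H,L}$ once $\varepsilon>0$ is taken sufficiently small, yielding the lemma.

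Two cautionary remarks. The constant $C_1$ need not be positive a priori (it depends on the cutoff); if it turns out to be non-positive, then any $\lambda>0$ works and we simply pick $\lambda_\ast=0$. Also, one should verify that the denominator expansion is valid (i.e.\ $O(\varepsilon^{(6-\mu)/2})/A$ is small), which is automatic for small $\varepsilon$ since $(6-\mu)/2>0$. The delicate input that makes the $N=3$ case genuinely different from $N\geq 4$ is precisely the slow $|u_\varepsilon|_2^2=O(\varepsilon)$ decay, which forces the smallness condition on $\lambda$ and explains the appearance of a nonzero threshold $\lambda_\ast$.
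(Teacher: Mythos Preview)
Your proposal is correct and follows essentially the same line as the paper's own proof: both use the estimates \eqref{E5} and \eqref{E10} (which, as you note, are asserted to hold also for $N=3$), establish the crucial lower bound $|u_\varepsilon|_2^2\geq C_0\varepsilon$ for small $\varepsilon$, and then combine the three pieces to see that the quotient equals $S_{H,L}-\big(\lambda C_0-O(1)\big)\varepsilon/\text{denom}+o(\varepsilon)$, which is below $S_{H,L}$ once $\lambda$ exceeds an explicit threshold $\lambda_\ast$. Your explicit radial computation of $|u_\varepsilon|_2^2$ (with the identity $\int_0^{R}\frac{r^2}{1+r^2}dr=R-\arctan R$) and your first-order expansion of the denominator make the argument slightly more detailed than the paper's, but the strategy is identical.
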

\begin{proof} By the definition of $u_{\varepsilon}$, we can get
\begin{equation}\label{3E1}
\aligned
\int_{\Omega}|u_{\varepsilon}|^{2}dx&
\geq \int_{B_{\delta}}|U_{\varepsilon}|^{2}dx\geq C_{0}\varepsilon\\
%&=\varepsilon^{-1}\int_{B_{\delta}}\left|\frac{3^{\frac{1}{4}}}{(1+|\frac{x}{\varepsilon}|^{2})^{\frac{1}{2}}}\right|^{2}dx\\
%&=3^{\frac{1}{2}} \varepsilon\int_{B_{\delta}}\frac{1}{\varepsilon^{2}+|x|^{2}}dx\\
%&\geq 3^{\frac{1}{2}} \varepsilon\int_{B_{\delta}}\frac{1}{1+|x|^{2}}dx=
\endaligned
\end{equation}
for $\varepsilon>0$ sufficiently small.
By \eqref{E5}, \eqref{E10} and \eqref{3E1}, we have

$$\aligned
\frac{|\nabla u_{\varepsilon}|_{2}^{2}-\lambda|u_{\varepsilon}|_{2}^{2}}
{\|u_{\varepsilon}\|_{NL}^{2}}
&\leq\frac{C(3,\mu)^{\frac{1}{6-\mu}\cdot\frac{3}{2}}S_{H,L}^{\frac{3}{2}}-\lambda C_{0}\varepsilon+O(\varepsilon)}
{\left(C(3,\mu)^{\frac{3}{2}}S_{H,L}^{\frac{6-\mu}{2}}-O(\varepsilon^{3-\frac{\mu}{2}})\right)^{\frac{1}{6-\mu}}
}\\
&=S_{H,L}-\frac{(\lambda C_{0}-O(1))\varepsilon}{\left(C(3,\mu)^{\frac{3}{2}}S_{H,L}^{\frac{6-\mu}{2}}-O(\varepsilon^{3-\frac{\mu}{2}})\right)^{\frac{1}{6-\mu}}}\\
%&\leq S_{H,L}-(\lambda C_{0}-O(1))\varepsilon\\
&<S_{H,L}
\endaligned
$$
if $\lambda$ is large enough, say $\lambda>\lambda_{\ast}>0$, while $\varepsilon>0$ is sufficiently small.
\end{proof}

We will show that $J_{\lambda}$ has the geometric structure of the Mountain Pass Theorem when $\lambda\in(0,\lambda_{1})$ and the geometric structure of the Linking Theorem when $\lambda\in[\lambda_{j}, \lambda_{j+1})$ for some $j\in\mathbb{N}$.

We set
$$
m_{j,\varepsilon}:=\max_{u\in\mathbb{G}_{j,\varepsilon}, \|u\|_{NL}=1}\left(\int_{\Omega}|\nabla u|^{2}dx-\lambda\int_{\Omega}|u|^{2}dx\right).
$$
Related to Lemma \ref{LE1}, we also have the corresponding result for  $N=3$, so, we have
\begin{lem}\label{LE2} If $N=3$ and $\lambda\in[\lambda_{j}, \lambda_{j+1})$ for some $j\in\mathbb{N}$, then,\\
(i) $m_{\varepsilon}$ is achieved in $u_{m}\in \mathbb{G}_{j,\varepsilon}$ and $u_{m}$ can be written as follows
$$
u_{m}=v+tu_{\varepsilon}
$$
with $v\in\mathbb{Y}_{j}$ and $t\geq0$. \\
(ii) The following estimate holds true
\begin{equation}\label{3EFL}
m_{j,\varepsilon}\leq\left\{\begin{array}{l}
\displaystyle (\lambda_{j}-\lambda)|v|_{2}^{2} \hspace{10.14mm} \mbox{if}\hspace{2.14mm} t=0,\\
\displaystyle (\lambda_{j}-\lambda)|v|_{2}^{2}+A_{\varepsilon}\left(1+|v|_{2}O(\varepsilon^{\frac{1}{2}})\right)
+O(\varepsilon^{\frac{1}{2}})|v|_{2} \hspace{3.14mm} \mbox{if} \hspace{2.14mm}t>0,\\
\end{array}
\right.
\end{equation}
as $\varepsilon\rightarrow0$, where $v$ is given in (i), $u_{\varepsilon}$ is given in Section 3 and $$A_{\varepsilon}=\frac{|\nabla u_{\varepsilon}|_{2}^{2}-\lambda|u_{\varepsilon}|_{2}^{2}}
{\|u_{\varepsilon}\|_{NL}^{2}}.$$
\end{lem}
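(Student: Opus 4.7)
The plan is to mirror the proof of Lemma~\ref{LE1} with $N=3$, so the whole argument reduces to: (a) extracting a maximizer in a finite-dimensional space, and (b) carrying out the same convexity/Hardy--Littlewood--Sobolev estimates but with the exponent $\frac{N-2}{2}$ specialized to $\frac{1}{2}$.

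For part (i), since $\mathbb{G}_{j,\varepsilon}$ is finite-dimensional, all norms on it are equivalent, so the set $\{u\in\mathbb{G}_{j,\varepsilon}:\|u\|_{NL}=1\}$ is compact and the continuous functional $u\mapsto|\nabla u|_2^2-\lambda|u|_2^2$ attains its maximum on it at some $u_m$. By definition of $\mathbb{G}_{j,\varepsilon}$, I can write $u_m=v+tu_\varepsilon$ with $v\in\mathbb{Y}_j$ and $t\in\R$; replacing $u_m$ by $-u_m$ if necessary (both norms above are even), I arrange $t\geq0$.

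For part (ii), the case $t=0$ is immediate: $u_m=v\in\mathbb{Y}_j$ is a linear combination of the first $j$ eigenfunctions, so by orthogonality in $H_0^1(\Omega)$ one has $|\nabla v|_2^2\leq \lambda_j|v|_2^2$, hence $m_{j,\varepsilon}\leq(\lambda_j-\lambda)|v|_2^2$. For $t>0$, the one new ingredient is the estimate
$$
\int_\Omega\int_\Omega\frac{|u_\varepsilon(x)|^{2_\mu^\ast}|u_\varepsilon(y)|^{2_\mu^\ast-1}}{|x-y|^\mu}\,dx\,dy = O(\varepsilon^{1/2}),
$$
which I would obtain exactly as in Lemma~\ref{LE1}: rescale to $U$, bound the integrand on $B_{2\delta/\varepsilon}^2$ by a Hardy--Littlewood--Sobolev estimate with exponents split according to whether $\mu>1$ or $\mu\leq1$, and check the matching lower bound on $B_\delta^2$ to pin down the order $\varepsilon^{(N-2)/2}=\varepsilon^{1/2}$.

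Armed with this, the convexity inequality
$$
1=\|v+tu_\varepsilon\|_{NL}^{22_\mu^\ast}\geq t^{22_\mu^\ast}\|u_\varepsilon\|_{NL}^{22_\mu^\ast}-C\,t^{22_\mu^\ast-1}|v|_\infty\int_\Omega\int_\Omega\frac{|u_\varepsilon(x)|^{2_\mu^\ast}|u_\varepsilon(y)|^{2_\mu^\ast-1}}{|x-y|^\mu}\,dx\,dy
$$
combined with the above $O(\varepsilon^{1/2})$ bound gives a uniform bound $t\leq C_3$ and
$$
\int_\Omega\int_\Omega\frac{|tu_\varepsilon(x)|^{2_\mu^\ast}|tu_\varepsilon(y)|^{2_\mu^\ast}}{|x-y|^\mu}\,dx\,dy\leq 1+|v|_2\,O(\varepsilon^{1/2}),
$$
after using the equivalence of $|\cdot|_\infty$ and $|\cdot|_2$ on the finite-dimensional $\mathbb{Y}_j$. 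Expanding $m_{j,\varepsilon}=|\nabla(v+tu_\varepsilon)|_2^2-\lambda|v+tu_\varepsilon|_2^2$ with the cross term controlled by $|u_\varepsilon|_1=O(\varepsilon^{(N-2)/2})=O(\varepsilon^{1/2})$, and factoring out $A_\varepsilon$ via the definition \eqref{AE}, yields the second line of \eqref{3EFL}.

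I expect the main obstacle to be the bookkeeping in the mixed integral $\iint|u_\varepsilon|^{2_\mu^\ast}|u_\varepsilon|^{2_\mu^\ast-1}/|x-y|^\mu$: one must verify that the $\mu>1$ vs.\ $\mu\leq1$ Hardy--Littlewood--Sobolev split still produces integrable exponents in dimension $N=3$, so the $\varepsilon^{1/2}$ rate really is sharp (in particular that the scaling factor $\varepsilon^{2N-\mu}\cdot\varepsilon^{(2\mu-3N-2)/2}$ gives the desired $\varepsilon^{(N-2)/2}$). Once that is in hand, the remainder is cosmetic.
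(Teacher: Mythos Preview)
Your proposal is correct and matches the paper's approach exactly: the paper does not give a separate proof of Lemma~\ref{LE2} but simply states that it follows from the argument of Lemma~\ref{LE1} specialized to $N=3$, which is precisely what you outline. Your checks on the scaling factor and on the $\mu>1$ versus $\mu\leq1$ Hardy--Littlewood--Sobolev split in dimension three are sound, so the $O(\varepsilon^{1/2})$ rate and the remainder of the argument go through without change.
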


\begin{lem} \label{3EL}If $N=3$, $\lambda\in(\lambda_{j}, \lambda_{j+1})$ for some $j\in\mathbb{N}$ and $\lambda>\lambda_{\ast}$, then,
$$
\frac{|\nabla u|_{2}^{2}-\lambda|u|_{2}^{2}}
{\|u\|_{NL}^{2}}<S_{H,L}
$$
for any $u\in\mathbb{G}_{j,\varepsilon}$.
\end{lem}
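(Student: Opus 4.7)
The plan is to mimic the proof of Lemma \ref{LE} but carefully track the error terms, since in dimension $N=3$ the gain coming from $\lambda|u_\varepsilon|_2^2$ and the remainders produced by the cut-off are both of linear order in $\varepsilon$, leaving less room to absorb the correction terms that come from the interaction between $v\in \mathbb{Y}_j$ and $tu_\varepsilon$. Concretely, I would argue that it suffices to prove $m_{j,\varepsilon}<S_{H,L}$, and then use Lemma \ref{LE2} to split into the cases $t=0$ and $t>0$.

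The case $t=0$ is immediate: by Lemma \ref{LE2}(ii),
\[
m_{j,\varepsilon}\leq (\lambda_j-\lambda)|v|_2^2<0<S_{H,L},
\]
since $\lambda\in(\lambda_j,\lambda_{j+1})$ forces $\lambda_j-\lambda<0$.

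For the case $t>0$, I would start from the inequality of Lemma \ref{LE2}(ii), namely
\[
m_{j,\varepsilon}\leq (\lambda_j-\lambda)|v|_2^2+A_\varepsilon\bigl(1+|v|_2 O(\varepsilon^{1/2})\bigr)+O(\varepsilon^{1/2})|v|_2,
\]
and plug in the estimate on $A_\varepsilon$ obtained inside the proof of Lemma \ref{3MPLE}. With $N=3$ that estimate reads
\[
A_\varepsilon\leq S_{H,L}-\frac{(\lambda C_0-O(1))\varepsilon}{\bigl(C(3,\mu)^{3/2}S_{H,L}^{(6-\mu)/2}-O(\varepsilon^{3-\mu/2})\bigr)^{1/(6-\mu)}},
\]
so for $\lambda>\lambda_\ast$ sufficiently large and $\varepsilon$ small, $A_\varepsilon\leq S_{H,L}-c_1\lambda\varepsilon$ for some $c_1>0$. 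Expanding the product $A_\varepsilon(1+|v|_2 O(\varepsilon^{1/2}))$ and using that $A_\varepsilon$ is bounded (in particular by $S_{H,L}$), one gets
\[
m_{j,\varepsilon}\leq S_{H,L}-c_1\lambda\varepsilon+(\lambda_j-\lambda)|v|_2^2+O(\varepsilon^{1/2})|v|_2.
\]
Now the crucial step is to absorb the offending linear-in-$|v|_2$ remainder against the negative quadratic term, exactly as was done in \eqref{EAK}. Since $\lambda-\lambda_j>0$, Young's (or simple calculus) inequality gives
\[
(\lambda_j-\lambda)|v|_2^2+O(\varepsilon^{1/2})|v|_2\leq \frac{O(\varepsilon)}{4(\lambda-\lambda_j)}=O(\varepsilon).
\]
Combining with the previous display,
\[
m_{j,\varepsilon}\leq S_{H,L}-c_1\lambda\varepsilon+O(\varepsilon),
\]
so by enlarging $\lambda_\ast$ if necessary (to swallow the implicit constant in $O(\varepsilon)$) one concludes $m_{j,\varepsilon}<S_{H,L}$ for all $\lambda>\lambda_\ast$ and $\varepsilon>0$ small enough.

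The main obstacle is precisely the balance of scales in $N=3$: the gain $\lambda|u_\varepsilon|_2^2$ is $O(\lambda\varepsilon)$, while the mixed term $O(\varepsilon^{1/2})|v|_2$ is at best of the same order after using Young's inequality, and the cut-off error is also linear. This is the reason why $\lambda$ must be chosen large (contrary to the $N\geq 4$ situation, where any $\lambda>0$ works), and explains the appearance of $\lambda_\ast$. The rest of the argument is essentially a bookkeeping exercise once one notices that, after the Young absorption, the only genuinely negative leading term is $-c_1\lambda\varepsilon$, whose coefficient grows linearly in $\lambda$ and therefore dominates every fixed constant for $\lambda$ large.
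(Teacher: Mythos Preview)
Your proposal is correct and follows essentially the same route as the paper's own proof: reduce to $m_{j,\varepsilon}<S_{H,L}$, dispose of the case $t=0$ trivially via $\lambda>\lambda_j$, and for $t>0$ feed the estimate on $A_\varepsilon$ from Lemma~\ref{3MPLE} into the inequality of Lemma~\ref{LE2}(ii), then absorb the mixed term $(\lambda_j-\lambda)|v|_2^2+O(\varepsilon^{1/2})|v|_2$ by the quadratic trick \eqref{EAK} to arrive at $S_{H,L}-\lambda C_0\varepsilon+O(\varepsilon)$. The only cosmetic difference is that you phrase the last step as ``enlarging $\lambda_\ast$ if necessary'', whereas the paper simply invokes the same $\lambda_\ast$ already introduced in Lemma~\ref{3MPLE}; since $\lambda_\ast$ is never quantified precisely in the paper, both formulations amount to the same thing.
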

\begin{proof} If $t=0$ in \eqref{3EFL}, by the choice of $\lambda\in(\lambda_{j}, \lambda_{j+1})$, we get that
$$
m_{\varepsilon}\leq (\lambda_{j}-\lambda)|v|_{2}^{2}\leq0<S_{H,L}.
$$
When $t>0$, by \eqref{E5}, \eqref{E10},  \eqref{3E1} and Lemma \ref{LE2}, using similar estimate as in \eqref{EAK},  we have
$$\aligned
m_{j,\varepsilon}&\leq(\lambda_{j}-\lambda)|v|_{2}^{2}+\frac{|\nabla u_{\varepsilon}|_{2}^{2}-\lambda|u_{\varepsilon}|_{2}^{2}}
{\|u_{\varepsilon}\|_{NL}^{2}}\left(1+|v|_{2}O(\varepsilon^{\frac{1}{2}})\right)
+O(\varepsilon^{\frac{1}{2}})|v|_{2}\\
&\leq(\lambda_{j}-\lambda)|v|_{2}^{2}+\frac{C(3,\mu)^{\frac{1}{6-\mu}\cdot\frac{3}{2}}S_{H,L}^{\frac{3}{2}}-\lambda C_{0}\varepsilon+O(\varepsilon)}
{\left(C(3,\mu)^{\frac{3}{2}}S_{H,L}^{\frac{6-\mu}{2}}-O(\varepsilon^{3-\frac{\mu}{2}})\right)^{\frac{1}{6-\mu}}
}\left(1+|v|_{2}O(\varepsilon^{\frac{1}{2}})\right)
+O(\varepsilon^{\frac{1}{2}})|v|_{2}\\
&\leq \left(S_{H,L}-\frac{(\lambda C_{0}-O(1))\varepsilon}{\left(C(3,\mu)^{\frac{3}{2}}S_{H,L}^{\frac{6-\mu}{2}}-O(\varepsilon^{3-\frac{\mu}{2}})\right)^{\frac{1}{6-\mu}}}\right)
\left(1+|v|_{2}O(\varepsilon^{\frac{1}{2}})\right)+(\lambda_{j}-\lambda)|v|_{2}^{2}+O(\varepsilon^{\frac{1}{2}})|v|_{2}\\
&\leq S_{H,L}-\frac{(\lambda C_{0}-O(1))\varepsilon}{\left(C(3,\mu)^{\frac{3}{2}}S_{H,L}^{\frac{6-\mu}{2}}-O(\varepsilon^{3-\frac{\mu}{2}})\right)^{\frac{1}{6-\mu}}}
+(\lambda_{j}-\lambda)|v|_{2}^{2}+O(\varepsilon^{\frac{1}{2}})|v|_{2}\\
&\leq S_{H,L}-\lambda C_{0} \varepsilon+O(\varepsilon)\\
&<S_{H,L}
\endaligned
$$
for $\varepsilon>0$ sufficiently small, since $\lambda>\lambda_{\ast}$ and $\lambda\in(\lambda_{j}, \lambda_{j+1})$.
The result follows.
\end{proof}

\textbf{Proof of Theorem \ref{EXS}: Case $N=3$.} We consider the  two
cases: $\lambda_{1}>\lambda_{\ast}$ and $\lambda_{1}>\lambda_{\ast}$ separately.

\textbf{Case 1. } $\lambda_{1}>\lambda_{\ast}$.

For this case we will use the Mountain Pass Theorem if $\lambda\in(\lambda_{\ast},\lambda_{1})$ while the Linking Theorem if $\lambda\in(\lambda_{j}, \lambda_{j+1})$ for some $j\in\mathbb{N}$.

If $\lambda\in(\lambda_{\ast},\lambda_{1})$, by Lemma \ref{MPG} and the mountain pass theorem without $(PS)$ condition (cf. \cite{Wi}), there
exists a $(PS)$ sequence $\{u_{n}\}$ such that $J_{\lambda}(u_{n})\rightarrow c^*$ and $ J_{\lambda}^{'}(u_{n})\rightarrow0$ in $H_{0}^{1}(\Omega)^{-1}$ at the Mountain Pass level $c^*$.
From Lemma \ref{3MPLE}, we have there exists $v\in H_{0}^{1}(\Omega)\backslash\{{0}\}$ such that
$$
\frac{|\nabla v|_{2}^{2}-\lambda|v|_{2}^{2}}
{\|v\|_{NL}^{2}}<S_{H,L}.
$$
Thus,
$$
\aligned
0<\max_{t\geq0}J_{\lambda}(tv)&=\max_{t\geq0}\left\{\frac{t^{2}}{2}\int_{\Omega}|\nabla v|^{2}dx-\frac{\lambda t^{2}}{2}\int_{\Omega} v^{2}dx-\frac{t^{22_{\mu}^{\ast}}}{22_{\mu}^{\ast}}
\int_{\Omega}\int_{\Omega}\frac{|v(x)|^{2_{\mu}^{\ast}}|v(y)|^{2_{\mu}^{\ast}}}
{|x-y|^{\mu}}dxdy\right\}\\
&=\frac{5-\mu}{12-2\mu}\left(\frac{|\nabla v|_{2}^{2}-\lambda|v|_{2}^{2}}
{\|v\|_{NL}^{2}}\right)^{\frac{6-\mu}{5-\mu}}\\
&<\frac{5-\mu}{12-2\mu}S_{H,L}^{\frac{6-\mu}{5-\mu}}.\\
\endaligned
$$
By the definition of $c$, we know $c<\frac{5-\mu}{12-2\mu}S_{H,L}^{\frac{6-\mu}{5-\mu}}$.

From Lemma \ref{ConPro}, we obtain $\{u_{n}\}$ contains a convergent subsequence. So, we have $J_{\lambda}$ has a critical value $c^*\in(0, \frac{5-\mu}{12-2\mu}S_{H,L}^{\frac{6-\mu}{5-\mu}})$ and problem \eqref{CCE} has a nontrivial solution.

If $\lambda\in(\lambda_{j}, \lambda_{j+1})$ for some $j\in\mathbb{N}$, we define
$$
z_{\varepsilon}=u_{\varepsilon}-\sum_{i=1}^{n}\left(\int_{\Omega}u_{\varepsilon}e_{i}dx\right)e_{i},
$$
then,
$$
\mathbb{G}_{j,\varepsilon}=\mathbb{Y}_{j}\oplus \mathbb{R}u_{\varepsilon}=\mathbb{Y}_{j}\oplus \mathbb{R}z_{\varepsilon}.
$$

By Lemma \ref{LK}, we get that $J_{\lambda}$ has the geometric structure required by the Linking Theorem (see [\cite{Rph1}, Theorem 5.3]). Thus we can define the Linking critical level $c_L$ of $J_{\lambda}$ as in \eqref{LL} and
$$
c_L\leq\max\limits_{u\in V}J_{\lambda}(u)\leq\max\limits_{u\in \mathbb{G}_{j,\varepsilon}}J_{\lambda}(u).
$$
On the other hand, we note that for any $u\in H_{0}^{1}(\Omega)\backslash\{0\}$
$$
\max\limits_{t\geq0}J_{\lambda}(tu)=\frac{5-\mu}{12-2\mu}\left(\frac{|\nabla u|_{2}^{2}-\lambda|u|_{2}^{2}}
{\|u\|_{NL}^{2}}\right)^{\frac{6-\mu}{5-\mu}}.
$$
As the same arguments in Section 4, we have
$$\aligned
c_L&\leq\max\limits_{u\in \mathbb{G}_{j,\varepsilon},t\geq0}J_{\lambda}(tu)\\
&=\max\limits_{u\in \mathbb{G}_{j,\varepsilon}}\frac{5-\mu}{12-2\mu}\left(\frac{|\nabla u|_{2}^{2}-\lambda|u|_{2}^{2}}
{\|u\|_{NL}^{2}}\right)^{\frac{6-\mu}{5-\mu}}\\
&<\frac{5-\mu}{12-2\mu}S_{H,L}^{\frac{6-\mu}{5-\mu}}.
\endaligned
$$

Therefore, the Linking Theorem and Lemma \ref{ConPro} yield that problem \eqref{CCE} admits a
solution $u\in H_{0}^{1}(\Omega)$ with critical value $c_L\geq\alpha$. Since $\alpha>0=J_{\lambda}(0)$, we deduce that $u$ is not identically zero.

\textbf{Case 2 } $\lambda_{1}<\lambda_{\ast}$

In this case, we only consider $\lambda\in(\lambda_{j}, \lambda_{j+1})$ for some $j\in\mathbb{N}$ and $\lambda>\lambda_{\ast}$. We can argue as in the last part of Case 1. In this way we get that for any
$\lambda>\lambda_{\ast}$ different from an eigenvalue of $-\Delta$, problem \eqref{CCE} admits a
solution $u\in H_{0}^{1}(\Omega)$ with critical value $c_L\geq\alpha$ and $u$ is not identically zero.

\section{Nonexistence}
 In this Section, we discuss nonexistence of solutions for \eqref{CCE} by using Poho\u{z}aev identity. Firstly, we are going to show that the solutions for equation \eqref{CCE} possess some regularity which will be used to prove the Poho\v{z}aev identity.

\begin{lem}\label{REG}
If $N\geq3$, $\lambda<0$ and $u\in H^{1}(\Omega)$ solves \eqref{CCE},
then $u\in W_{loc}^{2,p}(\Omega)$ for any $p\geq1$.
\end{lem}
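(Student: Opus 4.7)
The strategy is a standard elliptic bootstrap: first show $u\in L^{p}_{\mathrm{loc}}(\Omega)$ for every $p\in[1,\infty)$, and then invoke the Calder\'on-Zygmund $W^{2,p}$ theory to conclude. The sign assumption $\lambda<0$ plays no role for regularity; only the structure of the Choquard nonlinearity matters.

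Starting from $u\in H^1_0(\Omega)\hookrightarrow L^{2^{\ast}}(\Omega)$, we have $|u|^{2_{\mu}^{\ast}}\in L^{2N/(2N-\mu)}(\Omega)$, which is precisely the critical $L^p$-space appearing in Proposition \ref{HLS}. That proposition then yields
\[
K(x):=\int_{\Omega}\frac{|u(y)|^{2_{\mu}^{\ast}}}{|x-y|^{\mu}}\,dy\in L^{2N/\mu}(\Omega).
\]
Combined with $|u|^{2_{\mu}^{\ast}-1}\in L^{2N/(N-\mu+2)}(\Omega)$, H\"older's inequality places the nonlocal nonlinearity $K\,|u|^{2_{\mu}^{\ast}-2}u$ in $L^{2N/(N+2)}(\Omega)$, so the entire right-hand side of \eqref{CCE} lies in $L^{2N/(N+2)}_{\mathrm{loc}}(\Omega)$. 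Calder\'on-Zygmund gives $u\in W^{2,2N/(N+2)}_{\mathrm{loc}}(\Omega)$; however, Sobolev embedding at this exponent recovers only $u\in L^{2^{\ast}}_{\mathrm{loc}}$, so this single step produces no genuine gain of integrability.

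To break this critical-scaling deadlock I would reinterpret the equation as a linear Schr\"odinger-type PDE
\[
-\Delta u=\bigl(V(x)+\lambda\bigr)u,\qquad V(x):=K(x)\,|u(x)|^{2_{\mu}^{\ast}-2},
\]
and verify that $V\in L^{N/2}(\Omega)$. When $\mu<4$ one has $2_{\mu}^{\ast}-2>0$, and a direct H\"older argument with the exponents $2N/\mu$ for $K$ and $2N/(4-\mu)$ for $|u|^{2_{\mu}^{\ast}-2}$ does the job. When $\mu\geq 4$ the power $2_{\mu}^{\ast}-2$ is nonpositive, $|u|^{2_{\mu}^{\ast}-2}u$ is sublinear in $u$, and the elementary bound $|u|^{2_{\mu}^{\ast}-1}\leq 1+|u|$ together with $K\in L^{2N/\mu}$ reduces everything to the standard setting. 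A Brezis-Kato type iteration then upgrades $u$ to $L^{p}_{\mathrm{loc}}(\Omega)$ for every finite $p$.

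Once $u\in L^{p}_{\mathrm{loc}}$ for all $p<\infty$, the density $|u|^{2_{\mu}^{\ast}}$ lies in every $L^{r}_{\mathrm{loc}}$, HLS propagates this to $K\in L^{s}_{\mathrm{loc}}$ for every $s<\infty$, and so the entire right-hand side of \eqref{CCE} lies in $L^{p}_{\mathrm{loc}}(\Omega)$ for each $p\geq 1$. A final application of the Calder\'on-Zygmund estimate yields $u\in W^{2,p}_{\mathrm{loc}}(\Omega)$ for every $p\geq 1$. The principal obstacle is precisely the first bootstrap step: critical Sobolev scaling offers no immediate gain, so the whole proof rests on the Brezis-Kato argument and, in turn, on the verification that $V\in L^{N/2}$.
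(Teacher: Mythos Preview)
Your strategy is sound and, for $\mu\le 4$, it works: writing the equation as $-\Delta u=(V+\lambda)u$ with $V=K\,|u|^{2_{\mu}^{\ast}-2}$, the H\"older pairing of $K\in L^{2N/\mu}$ with $|u|^{2_{\mu}^{\ast}-2}\in L^{2N/(4-\mu)}$ indeed places $V$ in $L^{N/2}(\Omega)$, and the classical Brezis--Kato iteration applies. For that range of $\mu$ this is a perfectly legitimate alternative to the paper's route.

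The gap is in the range $\mu>4$ (which occurs as soon as $N\ge 5$). Your sublinearity bound $|u|^{2_{\mu}^{\ast}-1}\le 1+|u|$ rewrites the right-hand side as something dominated by $K(x)(1+|u|)$, so the effective potential multiplying $u$ is $K$ itself. But $K$ is only known to lie in $L^{2N/\mu}(\Omega)$, and for $\mu>4$ one has $2N/\mu<N/2$; on a bounded domain this does \emph{not} give $K\in L^{N/2}$. The hypothesis of Brezis--Kato therefore fails, and a direct check confirms that one round of Calder\'on--Zygmund plus Sobolev returns you exactly to $L^{2^{\ast}}$ with no gain. The phrase ``reduces everything to the standard setting'' hides precisely the missing step.

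The paper avoids this obstacle by not trying to factor the nonlocal term as a local potential times $u$. Instead it invokes a Choquard-specific regularity lemma (Proposition~3.2 of Moroz--Van Schaftingen \cite{MS2}): with $H=K=|u|^{2_{\mu}^{\ast}-1}\in L^{2N/(N-\mu+2)}$ one obtains $u\in L^{p}(\Omega)$ for every $p<\frac{2N^{2}}{(N-\mu)(N-2)}$. This range is wide enough that $|u|^{2_{\mu}^{\ast}}\in L^{q}$ for some $q>N/(N-\mu)$, whence the Riesz potential $K$ is actually in $L^{\infty}(\Omega)$. Once $K$ is bounded, the equation satisfies $|-\Delta u-\lambda u|\le C\,|u|^{(N-\mu+2)/(N-2)}$, a strictly subcritical local power, and the ordinary bootstrap concludes. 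If you wish to repair your argument for $\mu>4$ without quoting \cite{MS2}, you would essentially have to reproduce that proposition, which is a nonlocal Brezis--Kato iteration exploiting the Riesz-potential structure of the convolution rather than a pointwise $L^{N/2}$ bound on a potential.
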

\begin{proof}

 Denote by $H=K=|u|^{2_{\mu}^{\ast}-1}=|u|^{\frac{N-\mu+2}{N-2}}$, then $H, K\in L^{\frac{2N}{N-\mu+2}}(\Omega) $.
Using Proposition 3.2 of \cite{MS2}, we know $u\in L^{p}(\Omega)$ for every $p\in[2,\frac{2N^{2}}{(N-\mu)(N-2)})$. Moreover, there exists a constant $C_p$ independent of $u$ such that
$$
\left(\int_{\Omega}|u|^{p}dx\right)^{\frac1p}\leq C_p\left(\int_{\Omega}|u|^{2}dx\right)^{\frac12}.
$$
Thus, $|u|^{2_{\mu}^{\ast}}\in L^{q}(\Omega)$ for every $q\in[\frac{2(N-2)}{2N-\mu},\frac{2N^{2}}{(N-\mu)(2N-\mu)})$. Since $\frac{2(N-2)}{2N-\mu}<\frac{N}{N-\mu}<\frac{2N^{2}}{(N-\mu)(2N-\mu)}$, we have
$\int_{\Omega}\frac{|u|^{2_{\mu}^{\ast}}}{|x-y|^{\mu}}dy\in L^{\infty}(\Omega)$, and so
$$
|-\Delta u-\lambda u|\leq C|u|^{\frac{N-\mu+2}{N-2}}.
$$
By the classical bootstrap method for subcritical local problems in bounded domains, we deduce that $u\in W_{loc}^{2,p}(\Omega)$ for any $p\geq1$.
\end{proof}

\begin{Prop}\label{Poh} If $N\geq3$, $\lambda<0$ and $u\in H^{1}(\Omega)$ solves \eqref{CCE},
then the following equality holds
$$
\frac{1}{2}\int_{\partial\Omega}(x\cdot\nu)|\nabla u|^{2}ds+\frac{N-2}{2}\int_{\Omega} |\nabla u|^{2}dx=\frac{2N-\mu}{22_{\mu}^{\ast}}\int_{\Omega}
\int_{\Omega}\frac{|u(x)|^{2_{\mu}^{\ast}}|u(y)|^{2_{\mu}^{\ast}}}{|x-y|^{\mu}}dxdy+\frac{\lambda N}{2}\int_{\Omega} |u|^{2}dx,
$$
where $\nu$ denotes the unit outward normal to $\partial\Omega$.
\end{Prop}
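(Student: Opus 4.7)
The plan is to follow the classical Pohožaev strategy: multiply the equation by the multiplier $x\cdot\nabla u$ and integrate over $\Omega$. The regularity $u\in W^{2,p}_{\rm loc}(\Omega)$ from Lemma \ref{REG}, together with $u\in H^{1}_{0}(\Omega)$, makes all the integrations by parts legitimate (with a truncation/approximation argument if $\Omega$ is unbounded to handle decay at infinity, using the Hardy--Littlewood--Sobolev summability $|u|^{2_{\mu}^{\ast}}\in L^{2N/(2N-\mu)}$ and $|x|^{-\mu}\ast|u|^{2_{\mu}^{\ast}}\in L^{2N/\mu}$).

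I would treat the three terms produced by the multiplication separately. For the Laplacian side, the classical computation using $\nabla(x\cdot\nabla u)=\nabla u+\tfrac{1}{2}x\cdot\nabla|\nabla u|^{2}$ and the fact that $u=0$ on $\partial\Omega$ forces $\nabla u=(\partial_{\nu}u)\nu$ on the boundary yields
$$
\int_{\Omega}(-\Delta u)(x\cdot\nabla u)\,dx=\frac{2-N}{2}\int_{\Omega}|\nabla u|^{2}\,dx-\frac{1}{2}\int_{\partial\Omega}(x\cdot\nu)|\nabla u|^{2}\,ds.
$$
The linear term is immediate: since $u(x\cdot\nabla u)=\tfrac{1}{2}x\cdot\nabla u^{2}$ and $u|_{\partial\Omega}=0$, integration by parts gives $\lambda\int_{\Omega}u(x\cdot\nabla u)\,dx=-\tfrac{\lambda N}{2}\int_{\Omega}u^{2}\,dx$.

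The heart of the argument is the nonlocal term. Using $|u|^{2_{\mu}^{\ast}-2}u(x\cdot\nabla u)=\frac{1}{2_{\mu}^{\ast}}x\cdot\nabla|u|^{2_{\mu}^{\ast}}$, I would write the term as
$$
\frac{1}{2_{\mu}^{\ast}}\int_{\Omega}\int_{\Omega}\frac{|u(y)|^{2_{\mu}^{\ast}}\,x\cdot\nabla_{x}|u(x)|^{2_{\mu}^{\ast}}}{|x-y|^{\mu}}\,dy\,dx,
$$
and integrate by parts in the $x$ variable. The boundary contribution vanishes because $u|_{\partial\Omega}=0$, and moving $x$ under the divergence produces one term $-\frac{N}{2_{\mu}^{\ast}}K(u)$ (where $K(u)$ denotes the double integral on the right-hand side of the desired identity) together with an extra term coming from $x\cdot\nabla_{x}|x-y|^{-\mu}=-\mu\,x\cdot(x-y)|x-y|^{-\mu-2}$. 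The crucial step is to symmetrize this extra term: swapping the roles of $x$ and $y$ and averaging replaces $x\cdot(x-y)$ by $\frac{1}{2}(x-y)\cdot(x-y)=\frac{1}{2}|x-y|^{2}$, so the extra term collapses to $\frac{\mu}{2\cdot 2_{\mu}^{\ast}}K(u)$. Combining, the nonlocal contribution equals $-\frac{2N-\mu}{2\cdot 2_{\mu}^{\ast}}K(u)$.

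Putting the three pieces together and multiplying through by $-1$ yields exactly the claimed identity. The main obstacle is the careful handling of the nonlocal term, both algebraically (the symmetrization that converts $x\cdot(x-y)$ into $|x-y|^{2}$) and analytically (justifying differentiation under the double integral and the vanishing of the boundary piece); once the symmetrization identity is in hand the rest is bookkeeping.
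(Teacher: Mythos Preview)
Your proposal is correct and follows essentially the same Poho\v{z}aev strategy as the paper: multiply by $x\cdot\nabla u$, integrate, and handle the nonlocal term by integration by parts in $x$ followed by the symmetrization $x\cdot(x-y)\mapsto\tfrac12|x-y|^{2}$. The only cosmetic difference is that you invoke the chain-rule identity $|u|^{2_{\mu}^{\ast}-2}u(x\cdot\nabla u)=\tfrac{1}{2_{\mu}^{\ast}}x\cdot\nabla|u|^{2_{\mu}^{\ast}}$ before integrating by parts, whereas the paper integrates by parts first (producing a $(2_{\mu}^{\ast}-1)$ term that is then moved back to the left); both routes yield the same identity.
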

\begin{proof}
Since $u$ is a solution of \eqref{CCE} and Lemma \ref{REG}, then $u$ satisfies
\begin{equation}
-\Delta u
=\big(\int_{\Omega}\frac{|u|^{2_{\mu}^{\ast}}}{|x-y|^{\mu}}dy\big)|u|^{2_{\mu}^{\ast}-2}u+\lambda u,
\end{equation}
then
\begin{equation}
-\int_{\Omega}(x\cdot\nabla u)\Delta udx=\int_{\Omega}(x\cdot\nabla u)(\int_{\Omega}\frac{|u|^{2_{\mu}^{\ast}}}{|x-y|^{\mu}}dy)|u|^{2_{\mu}^{\ast}-1}dx+
\lambda \int_{\Omega}(x\cdot\nabla u)udx.
\end{equation}
Calculating the first term on the right side, we know
\begin{equation}\aligned
\int_{\Omega}(x\cdot\nabla u(x))&(\int_{\Omega}\frac{|u(y)|^{2_{\mu}^{\ast}}}{|x-y|^{\mu}}dy)|u(x)|^{2_{\mu}^{\ast}-1}dx\\
&=-\int_{\Omega}u(x)\nabla(x\int_{\Omega}\frac{|u(y)|^{2_{\mu}^{\ast}}}{|x-y|^{\mu}}dy|u(x)|^{2_{\mu}^{\ast}-1})dx\\
&=-\int_{\Omega}u(x)(N\int_{\Omega}\frac{|u(y)|^{2_{\mu}^{\ast}}}{|x-y|^{\mu}}dy|u(x)|^{2_{\mu}^{\ast}-1}\\
&\hspace{0.5cm}+(2_{\mu}^{\ast}-1)|u(x)|^{2_{\mu}^{\ast}-2}x\cdot\nabla u(x)\int_{\Omega}\frac{|u(y)|^{2_{\mu}^{\ast}}}{|x-y|^{\mu}}dy\\
&\hspace{1.0cm}+|u(x)|^{2_{\mu}^{\ast}-1}\int_{\Omega}(-\mu)x\cdot(x-y)\frac{|u(y)|^{2_{\mu}^{\ast}}}{|x-y|^{\mu+2}}dy)dx\\
&=-N\int_{\Omega}\int_{\Omega}\frac{|u(x)|^{2_{\mu}^{\ast}}|u(y)|^{2_{\mu}^{\ast}}}{|x-y|^{\mu}}dxdy\\
&\hspace{0.5cm}-(2_{\mu}^{\ast}-1)\int_{\Omega}x\cdot\nabla u(x)\int_{\Omega}\frac{|u(y)|^{2_{\mu}^{\ast}}}{|x-y|^{\mu}}dy|u(x)|^{2_{\mu}^{\ast}-1}dx\\
&\hspace{1.0cm}+\mu\int_{\Omega}\int_{\Omega}x\cdot(x-y)\frac{|u(y)|^{2_{\mu}^{\ast}}}{|x-y|^{\mu+2}}|u(x)|^{2_{\mu}^{\ast}}dydx.
\endaligned
\end{equation}
This implies that
$$\aligned
2_{\mu}^{\ast}\int_{\Omega}(x\cdot\nabla u(x))&(\int_{\Omega}\frac{|u(y)|^{2_{\mu}^{\ast}}}{|x-y|^{\mu}}dy)|u(x)|^{2_{\mu}^{\ast}-1}dx\\
&=-N\int_{\Omega}\int_{\Omega}\frac{|u(x)|^{2_{\mu}^{\ast}}|u(y)|^{2_{\mu}^{\ast}}}{|x-y|^{\mu}}dxdy\\
&\hspace{1.0cm}+\mu\int_{\Omega}\int_{\Omega}x\cdot(x-y)\frac{|u(y)|^{2_{\mu}^{\ast}}}{|x-y|^{\mu+2}}|u(x)|^{2_{\mu}^{\ast}}dydx,
\endaligned
$$
similarly,
$$\aligned
2_{\mu}^{\ast}\int_{\Omega}(y\cdot\nabla u(y))(\int_{\Omega}&\frac{|u(x)|^{2_{\mu}^{\ast}}}{|x-y|^{\mu}}dx)|u(y)|^{2_{\mu}^{\ast}-1}dy\\
&=-N\int_{\Omega}\int_{\Omega}\frac{|u(y)|^{2_{\mu}^{\ast}}|u(x)|^{2_{\mu}^{\ast}}}{|x-y|^{\mu}}dydx\\
&\hspace{0.5cm}+\mu\int_{\Omega}\int_{\Omega}y\cdot(y-x)\frac{|u(x)|^{2_{\mu}^{\ast}}}{|x-y|^{\mu+2}}|u(y)|^{2_{\mu}^{\ast}}dxdy
\endaligned
$$
and consequently, we get
\begin{equation}\aligned
\int_{\Omega}(x\cdot\nabla u(x))(\int_{\Omega}\frac{|u(y)|^{2_{\mu}^{\ast}}}{|x-y|^{\mu}}dy)|u(x)|^{2_{\mu}^{\ast}-1}dx
=\frac{\mu-2N}{22_{\mu}^{\ast}}\int_{\Omega}\int_{\Omega}\frac{|u(x)|^{2_{\mu}^{\ast}}|u(y)|^{2_{\mu}^{\ast}}}{|x-y|^{\mu}}dxdy.
\endaligned
\end{equation}
Since
\begin{equation}
\int_{\Omega}(x\cdot\nabla u)udx=-\frac{N}{2}\int_{\Omega}u^{2}dx
\end{equation}
and
\begin{equation}\aligned
\int_{\partial\Omega}(x\cdot\nu)|\nabla u|^{2}ds=(2-N)\int_{\Omega}|\nabla u|^{2}dx+2\int_{\Omega}(x\cdot\nabla u)\Delta udx.
\endaligned
\end{equation}
From the equalities above, we know  the result holds.
\end{proof}

\textbf{Proof of Theorem \ref{NEXS}.}
We assume that $u$ is a nontrivial solution of \eqref{CCE}, then we have
$$
\int_{\Omega}|\nabla u|^{2}dx=\int_{\Omega}\int_{\Omega}\frac{|u(x)|^{2_{\mu}^{\ast}}|u(y)|^{2_{\mu}^{\ast}}}{|x-y|^{\mu}}dxdy+\lambda\int_{\Omega}u^{2}dx.
$$
From Proposition \ref{Poh}, we can obtain
$$
\int_{\partial\Omega}(x\cdot\nu)|\nabla u|^{2}ds=2\lambda\int_{\Omega} |u|^{2}dx.
$$
Since $\Omega$ is strictly star-shaped with respect to the origin in $\mathbb{R}^N$, then $x\cdot\nu>0$. Thus, we obtain $u\equiv0$ from $\lambda<0$. Which is a contradiction.

\end{document}